\documentclass[12pt]{article}
\usepackage[a4paper,margin=2.5cm]{geometry}
\usepackage[comma, round]{natbib}
\bibliographystyle{plainnat}

\usepackage{float}
\usepackage{amssymb}
\usepackage{amsthm}
\usepackage{amsmath}
\usepackage{stmaryrd}
\usepackage{titletoc}
\usepackage{mathrsfs}
\usepackage[colorlinks, allcolors=blue]{hyperref}
\usepackage{amsfonts}

\usepackage{graphicx}
\usepackage[linesnumbered, ruled, vlined]{algorithm2e}
\usepackage{esint}
\usepackage{bm}


\vfuzz2pt 
\hfuzz2pt 


\numberwithin{equation}{section}
\numberwithin{figure}{section}

\theoremstyle{plain}
\newtheorem{thm}{Theorem}[section]

\newtheorem{lem}[thm]{Lemma}
\newtheorem{prop}[thm]{Proposition}

\theoremstyle{definition}

\newtheorem{asmp}{Assumption}[section]
\newtheorem{rmk}{Remark}[section]
\newtheorem{exam}{Example}[section]

\newcommand{\bR}{\mathbf{R}}

\newcommand{\bP}{\mathbb{P}}

\newcommand{\cB}{\mathcal{B}}
\newcommand{\cC}{\mathcal{C}}

\newcommand{\cE}{\mathcal{E}}
\newcommand{\cF}{\mathcal{F}}
\newcommand{\cL}{\mathcal{L}}
\newcommand{\cP}{\mathscr{P}}
\newcommand{\cS}{\mathcal{S}}
\newcommand{\cM}{\mathcal{M}}
\newcommand{\cN}{\mathcal{N}}

\newcommand{\cW}{\mathcal{W}}

\newcommand{\cX}{\mathcal{X}}

\newcommand{\eps}{\varepsilon}

\newcommand{\la}{\langle}
\newcommand{\ra}{\rangle}

\newcommand{\md}{\mathop{}\mathopen\mathrm{d}}
\newcommand{\me}{\mathop{}\mathopen\mathrm{e}}
\newcommand{\E}{\mathbb{E}}

\newcommand{\im}{\mu_{*}}
\newcommand{\wl}{\pi}
\renewcommand{\wp}{\varpi}


\title{Empirical approximation to invariant measures for McKean--Vlasov processes: mean-field\\interaction vs self-interaction}

\author{Kai~Du\thanks{Shanghai Center for Mathematical Sciences, 
		Fudan University, 
		Shanghai 200438, China (email: {\tt kdu@fudan.edu.cn}). 
		K.~Du was partially supported 
		by National Key R{\&}D Program of China (No.~2018YFA0703900), 
		and by Natural Science Foundation of Shanghai (No.~20ZR1403600).
	}
	\and Yifan~Jiang\thanks{Mathematical Institute,
		University of Oxford,
		Oxford OX2 6GG,
		United Kingdom (email: {\tt yifan.jiang@maths.ox.ac.uk}).
		Y.~Jiang was supported by the EPSRC Centre for Doctoral Training in Mathematics of Random Systems: Analysis, Modelling, and Simulation (EP/S023925/1).
	}
	\and Jinfeng~Li\thanks{School of Mathematical Sciences, 
		Fudan University, 
		Shanghai 200433, China (email: {\tt lijinfeng@fudan.edu.cn}).}
	}
\date{}

\begin{document}
\maketitle


\begin{abstract}
This paper proves that, under a monotonicity condition, the invariant probability measure of a McKean--Vlasov process can be approximated by weighted empirical measures of some processes including itself. 
These processes are described by distribution dependent or empirical measure dependent stochastic differential equations constructed from the equation for the McKean--Vlasov process. 
Convergence of empirical measures is characterized by upper bound estimates for their Wasserstein distance to the invariant measure. 
The theoretical results are demonstrated via a mean-field Ornstein--Uhlenbeck process.
\medskip

\noindent{\em Keywords}: McKean--Vlasov process,
invariant measure,
empirical measure,
Wasserstein distance,
convergence rate
\medskip

\noindent {\em MSC 2020 Subject Classifications}:
60B10, 37M25; 60F25, 60H10
\end{abstract}

\section{Introduction}
The invariant measure and its computation are important topics in the theory and applications of stochastic processes.
Birkhoff's ergodic theorem reveals a link between 
invariant measures and empirical measures of Markov processes 
(cf.~\cite{birkhoff1931proof,da1996ergodicity});
for instance, if $Z$ is a Markov process with a unique invariant (probability) measure $\mu$,
then the ergodic theorem implies that the empirical measure
\begin{equation}
	\label{eqn-emp}
	\cE_{t}[Z]:=\frac{1}{t}\int_{0}^{t}\delta_{Z_{r}}\md r
	=\int_0^1\delta_{Z_{ts}}\md s,
	\quad t > 0
\end{equation}
converges weakly to $\mu$ as $t\to\infty$, 
where $\delta_z$ denotes the Dirac measure at $z$.
Such a property is also called the law of large numbers for empirical measures (cf.~\cite{gine1984some}),
resulting in a practical way to approximate invariant measures.

This paper considers a similar problem for McKean--Vlasov processes: whether and how the invariant measure of a McKean--Vlasov process can be approximated by the empirical measures
of the process itself or some other processes.
A McKean--Vlasov process is a stochastic process described by a stochastic differential equation (SDE) whose coefficients depend on the distribution of the solution,
so it is not a Markov process in the usual sense. 
The equation is often referred as the McKean--Vlasov SDE or the mean-field SDE, established from the ``propagation
of chaos'' of particle systems with mean-field interaction (cf.~\cite{kac1956foundations,mckean1966class})
and applied broadly in sciences, economics, neuroscience, and machine learning; see, for example,~\cite{sznitman1991topics,lasry2007mean,touboul2014prop,
mehri2020propagation}, and the monograph~\cite{carmona2018probabilistic}.
An extensive literature review on the study and applications of McKean--Vlasov SDEs can be found in a recent paper~\cite{sharrock2021parameter}.

Let \(W\) be a standard \(d\)-dimensional Wiener process defined on  
a complete filtered probability space \((\Omega,\cF,\cF_t,\bP)\).
We consider a McKean--Vlasov process $X$ on \(\bR^{d}\) governed by the following equation
\begin{equation}
	\label{eqn-mvsde}
	\md X_{t}=b(X_{t},\cL_{t}[X])\md t+\sigma(X_{t},\cL_{t}[X])\md W_{t},
\end{equation}
where \(\cL_{t}[X]\) denotes the law of \(X_t\).
The well-posedness of~\eqref{eqn-mvsde} can be found in~\cite{buckdahn2017mean, wang2018distribution, mishura2020existence}.
In contrast to Markov processes, $X$ induces a \emph{nonlinear} semigroup $P^*_t$ on the space of probability measures, i.e., $P^*_t \mu := \cL_t[X]$ with $\cL_0[X] = \mu$.
A probability measure $\mu$ is \emph{invariant} for $X$ or~\eqref{eqn-mvsde} if $P^*_t \mu = \mu$ for all $t>0$.

It can be proved as in~\cite{wang2018distribution} that,
under the following assumption, $X$ has a unique invariant measure, and $\cL_t[X]$ converges to the invariant measure exponentially in Wasserstein distance. In what follows,
$\|\cdot\|$ denotes the Frobenius norm of a matrix; 
$\cP$ denotes the space of probability measures on \(\bR^{d}\) equipped with $2$-Wasserstein distance $\cW_{2}(\cdot,\cdot)$,
and $\cP_r$ contains all $\mu\in\cP$
with $\mu(|\cdot|^r):=\int |x|^r \,\mu(\md x)<\infty$.

\begin{asmp}
\label{assump}
The coefficients $b$ and $\sigma$ are continuous functions on $\bR^d \times \cP_2$  and bounded on any bounded set; there are
constants $\alpha > \beta \ge 0$, $\gamma>0$, $\rho>0$, and $K\ge 0$ such that
\begin{gather}\label{eqn-mono}
	2\la b(x,\mu)-b(y,\nu),x-y \ra + \|\sigma(x,\mu)  -\sigma(y,\nu)\|^{2}
	\leq -\alpha|x-y|^{2}+\beta\cW_{2}(\mu,\nu)^{2},\\
  2\la b(x,\mu), x \ra + (1+\rho) \|\sigma(x,\mu) \|^{2}
	\leq -\gamma|x|^{2} + K[ 1 + \rho + \mu(|\cdot|^2) ]\nonumber
\end{gather}
for all \(x,\,y\in\bR^{d}\) and \(\mu,\,\nu\in \cP_2\).
\end{asmp}

We find that this assumption also leads to the convergence of the empirical measure $\cE_t[X]$,
which allows us to approximate the invariant measure for~\eqref{eqn-mvsde} by observing or simulating a trajectory of $X$.
More specifically, we have the following result.

\begin{thm}
	\label{thm-01}
Let Assumption~\ref{assump} be satisfied, and $\im\in\cP_2$ be the invariant measure for~\eqref{eqn-mvsde}.
Then for any \(\nu\in\cP_{2+\rho}\),
	\begin{equation*}
		\E^{\nu}\big[ \cW_{2}(\cE_{t}[X],\im)^{2} \big]
		= O(t^{-\eps})
	\end{equation*}
	with any $\eps<\frac{\rho}{(d+2)(\rho + 2)}$;
in particular, if $\sigma$ is bounded, then $\eps < \frac{1}{d+2}$.
\end{thm}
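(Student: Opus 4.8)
The plan is to compare $X$ with the genuinely Markovian diffusion obtained by freezing the measure argument of~\eqref{eqn-mvsde} at the invariant law,
\[
\md\bar X_t=b(\bar X_t,\im)\,\md t+\sigma(\bar X_t,\im)\,\md W_t,
\]
driven by the same Brownian motion $W$ and started from $\bar X_0=X_0$, where $\cL_0[X]=\nu$. Since $\im$ is invariant for~\eqref{eqn-mvsde}, it is invariant for this linear equation as well, and~\eqref{eqn-mono} with both measure arguments set equal to $\im$ shows that $\bar X$ is strongly dissipative with rate $\alpha$, hence has $\im$ as its unique invariant measure and is exponentially ergodic in $\cW_2$. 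The second line of Assumption~\ref{assump} furnishes the uniform moment bounds $\sup_{t\ge0}\E|X_t|^{2+\rho}<\infty$ and $\sup_{t\ge0}\E|\bar X_t|^{2+\rho}<\infty$ when $\nu\in\cP_{2+\rho}$; passing to the limit in the first of these and using $\cL_t[X]\to\im$ also gives $\im\in\cP_{2+\rho}$. By the triangle inequality it then suffices to bound $\E^\nu[\cW_2(\cE_t[X],\cE_t[\bar X])^2]$ and $\E^\nu[\cW_2(\cE_t[\bar X],\im)^2]$ separately.

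For the first term I would couple the two empirical measures along the diagonal, i.e.\ use the transport plan $\frac1t\int_0^t\delta_{(X_r,\bar X_r)}\,\md r$, so that $\cW_2(\cE_t[X],\cE_t[\bar X])^2\le\frac1t\int_0^t|X_r-\bar X_r|^2\,\md r$. Applying It\^o's formula to $|X_t-\bar X_t|^2$ and using~\eqref{eqn-mono} for the pair $(X_t,\cL_t[X])$, $(\bar X_t,\im)$ gives
\[
\frac{\md}{\md t}\,\E|X_t-\bar X_t|^2\ \le\ -\alpha\,\E|X_t-\bar X_t|^2+\beta\,\cW_2(\cL_t[X],\im)^2 .
\]
As recalled before the statement, $\cW_2(\cL_t[X],\im)\to0$ exponentially, so Gr\"onwall's inequality yields $\E|X_t-\bar X_t|^2\to0$ at least exponentially fast, whence $\int_0^\infty\E|X_r-\bar X_r|^2\,\md r<\infty$ and $\E^\nu[\cW_2(\cE_t[X],\cE_t[\bar X])^2]=O(1/t)$.

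The heart of the proof is the second term, i.e.\ the rate of convergence in $\cW_2$ of the empirical measure of the ergodic diffusion $\bar X$ to its invariant law, and here I would argue by truncation and discretization. (i) Replace $\cE_t[\bar X]$ and $\im$ by their restrictions to a ball $B_R$; by the uniform $(2+\rho)$-moment bound the discarded mass costs $O(R^{-\rho})$ in $\cW_2^2$. (ii) Partition $B_R$ into cubes $Q$ of side $\delta$ and bound the $\cW_1$ distance between the restricted measures by $\delta+R\sum_Q|\cE_t[\bar X](Q)-\im(Q)|$. (iii) Estimate, for each cube, $\E[\,|\cE_t[\bar X](Q)-\im(Q)|^2\,]\lesssim\im(Q)/t$ through a variance bound for the occupation average $\frac1t\int_0^t\mathbf 1_Q(\bar X_r)\,\md r$, exploiting the Markov property of $\bar X$ together with its exponential $\cW_2$-ergodicity. (iv) Sum over the $\asymp(R/\delta)^d$ cubes by Cauchy--Schwarz to obtain $\E\sum_Q|\cE_t[\bar X](Q)-\im(Q)|\lesssim(R/\delta)^{d/2}t^{-1/2}$, and pass from $\cW_1$ on $B_R$ to $\cW_2^2$ at the cost of a factor $\mathrm{diam}(B_R)\asymp R$. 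Collecting the pieces,
\[
\E^\nu\big[\cW_2(\cE_t[\bar X],\im)^2\big]\ \lesssim\ R^{-\rho}+R\Big(\delta+R\,(R/\delta)^{d/2}\,t^{-1/2}\Big),
\]
and optimizing first $\delta\asymp R\,t^{-1/(d+2)}$ and then $R\asymp t^{1/((d+2)(\rho+2))}$ yields the exponent $\tfrac{\rho}{(d+2)(\rho+2)}$; when $\sigma$ is bounded the moment bound is available for every $\rho>0$, giving the limiting exponent $\tfrac1{d+2}$.

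The main obstacle is step (iii): the ergodicity coming from~\eqref{eqn-mono} directly controls only Lipschitz test functions, while the discretization unavoidably involves indicators of small cubes. Bridging this gap — e.g.\ mollifying $\mathbf 1_Q$ at an auxiliary scale and controlling the boundary layers uniformly in the partition, or passing through the Poisson equation for the generator of $\bar X$ and its regularity estimates — so that the per-cube variance is of the clean order $\im(Q)/t$ without degrading the exponent, is the delicate point; it is also the reason the bound is stated with a strict inequality in the exponent. Once this variance lemma is in hand, the truncation, the summation over cubes, the $\cW_1\to\cW_2^2$ passage, and the two-parameter optimization are routine.
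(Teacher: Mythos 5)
Your overall architecture matches the paper's: compare $X$ with the Markovian diffusion $\hat X$ obtained by freezing the measure argument at $\im$, control $\E|X_t-\hat X_t|^2$ via It\^o's formula, \eqref{eqn-mono} and the exponential convergence of $\cL_t[X]$ to $\im$, couple the two empirical measures along the diagonal, and then reduce everything to the rate of convergence of $\cE_t[\hat X]$ to $\im$ in $\cW_2$. That first half is correct, and your two-parameter optimization in $(R,\delta)$ even reproduces the exponent $\rho/((d+2)(\rho+2))$.

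However, the heart of the argument --- your step (iii), the per-cube variance bound $\E\,|\cE_t[\hat X](Q)-\im(Q)|^2\lesssim \im(Q)/t$ --- is a genuine gap, and you acknowledge it without closing it. Under Assumption~\ref{assump} the only quantitative ergodicity available for $\hat X$ is the $\cW_2$-contraction $\E|\hat X^{x_1}_t-\hat X^{x_2}_t|^2\le C|x_1-x_2|^2\me^{-\alpha t}$, which controls covariances of \emph{Lipschitz} observables only; indicators of small cubes are not covered, and no spectral gap, total-variation mixing or density estimate is at hand (note that $\sigma(\cdot,\im)$ may be degenerate, so $\cL_t[\hat X]$ need not even have a density, which also undermines a direct comparison of cube masses and the Poisson-equation route you mention). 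The paper resolves exactly this difficulty by a different mechanism: it mollifies both $\cE^{\wp}_t[\hat X]$ and $\cL^{\wp}_t[\hat X]$ by convolution with a compactly supported smooth kernel $\phi_r$, applies the Horowitz--Karandikar density coupling lemma (Lemma~\ref{lem-decouple}) to the mollified densities, and controls the resulting time-covariance term through the Markov property, a weak Poincar\'e inequality (Lemma~\ref{lem-grad}) and the gradient contraction, paying explicit factors $\|\nabla\phi_r\|_\infty\sim r^{-d}$ and $R^{d+2}$; the tail beyond $B_R$ is handled by the $(2+\rho)$-moment bound, and optimizing $r=t^{-\frac{\rho\eps}{2(d+2)(\rho+2)}}$, $R=t^{\frac{\eps}{(d+2)(\rho+2)}}$ produces the stated rate. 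So while your truncation/optimization scaffolding is sound, the variance lemma it rests on is unproven and not derivable from the hypotheses as stated; replacing the cube-counting step by the mollification--density-coupling argument (or an equivalent quantitative substitute valid for Lipschitz test functions only) is precisely what is needed to make the proof complete.
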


On the other hand, simulating a trajectory of $X$ in practice meets a critical issue: how to compute or simulate the distributions involved in the equation?
Some natural ways include using the propagation of chaos (cf.~\cite{veretennikov2006ergodic}) 
or solving the nonlinear Fokker--Planck--Kolmogorov equation (cf.~\cite{bogachev2019convergence}),
but normally, the distributions cannot be computed out explicitly in either way, especially when taking account of the computation capacity and cost.

Intuitively, the fact that $\cL_t[X]$ and $\cE_t[X]$ get closer gradually
suggests a modification of~\eqref{eqn-mvsde}: substituting the distribution argument with the empirical measure of the process.
In other words, we introduce the following path-dependent SDE
\begin{equation}
	\label{eqn-emvsde}
	\md Y_{t}=b(Y_{t},\cE_{t}[Y])\md t+\sigma(Y_{t},\cE_{t}[Y])\md W_{t}.
\end{equation}
Interestingly enough, we have the following convergence result.

\begin{thm}
	\label{thm-02}
Under the setting of Theorem~\ref{thm-01}, it holds that for any \(y\in\bR^d\),
	\begin{equation*}
		\E^{y}\big[ \cW_{2}(\cE_{t}[Y],\im)^{2} \big]
		= O(t^{-\eps})
	\end{equation*}
	with any $\eps< (1-\frac{\beta}{\alpha}) \wedge \frac{\rho}{(d+2)(\rho + 2)}$.
\end{thm}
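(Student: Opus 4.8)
The plan is to compare $Y$ with the McKean--Vlasov process $X$ of \eqref{eqn-mvsde} driven by the \emph{same} Wiener process and started at the \emph{same} point $X_0=Y_0=y$, turn the comparison into an integro-differential inequality via It\^o's formula and the monotonicity \eqref{eqn-mono}, and then bootstrap it using the rate furnished by Theorem~\ref{thm-01}. Set $\phi(t):=\E^y|Y_t-X_t|^2$, which is finite (and bounded on $[0,1]$, with $\phi(0)=0$) by the a priori second-moment bounds for $X$ and $Y$. Applying It\^o's formula to $|Y_t-X_t|^2$, using \eqref{eqn-mono} at the points $(Y_t,\cE_t[Y])$ and $(X_t,\cL_t[X])$, and taking expectations gives, in integrated form,
\begin{equation*}
	\phi'(t)\ \le\ -\alpha\,\phi(t)+\beta\,\E^y\!\big[\cW_2(\cE_t[Y],\cL_t[X])^2\big].
\end{equation*}

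To close this I would unfold the Wasserstein error through the ``reference'' empirical measure $\cE_t[X]$ and then $\im$: for any $\delta>0$, by the triangle inequality and Young's inequality,
\begin{align*}
	\cW_2(\cE_t[Y],\cL_t[X])^2
	&\le (1+\delta)\,\cW_2(\cE_t[Y],\cE_t[X])^2 \\
	&\quad +2\big(1+\tfrac1\delta\big)\big(\cW_2(\cE_t[X],\im)^2+\cW_2(\im,\cL_t[X])^2\big).
\end{align*}
Pushing the uniform law on $[0,t]$ forward by $r\mapsto(Y_r,X_r)$ produces a coupling of $\cE_t[Y]$ and $\cE_t[X]$, so pathwise $\cW_2(\cE_t[Y],\cE_t[X])^2\le\frac1t\int_0^t|Y_r-X_r|^2\md r$, whose expectation is $\frac1t\int_0^t\phi(r)\md r$. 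Theorem~\ref{thm-01} applied with $\nu=\delta_y\in\cP_{2+\rho}$ gives $\E^y\cW_2(\cE_t[X],\im)^2=O(t^{-\eps})$ for every $\eps<\tfrac{\rho}{(d+2)(\rho+2)}$, and the exponential convergence of $\cL_t[X]$ to $\im$ makes $\cW_2(\im,\cL_t[X])^2$ exponentially small. Hence, with $C:=\beta(1+\delta)$ and $\eta(t):=2\beta(1+\tfrac1\delta)\big(\E^y\cW_2(\cE_t[X],\im)^2+\cW_2(\im,\cL_t[X])^2\big)=O(t^{-\eps})$,
\begin{equation*}
	\phi'(t)\ \le\ -\alpha\,\phi(t)+\frac{C}{t}\int_0^t\phi(r)\md r+\eta(t).
\end{equation*}
Since $\beta<\alpha$, I fix $\delta$ small enough that $C<\alpha$ and, for the target $\eps<(1-\tfrac\beta\alpha)\wedge\tfrac{\rho}{(d+2)(\rho+2)}$, also that $\eps<1-C/\alpha$.

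The crux is a Gr\"onwall-type lemma for this inequality: if $\phi\ge0$ (bounded near $0$) satisfies $\phi'\le-\alpha\phi+\frac Ct\int_0^t\phi+\eta$ with $0<C<\alpha$, $\eta(t)=O(t^{-\eps})$ and $0<\eps<1-C/\alpha$, then $H(t):=\int_0^t\phi(r)\md r=O(t^{1-\eps})$. Indeed, integrating over $[0,t]$ and using $\phi(t)\ge0$ yields $\alpha H(t)\le C_1+C\int_1^t\frac{H(s)}{s}\md s+C_1 t^{1-\eps}$ for $t\ge1$ (the contributions on $[0,1]$ are bounded since $H(s)/s$ stays bounded near $0$ and $\eta$ is integrable); writing $G(t):=\int_1^t\frac{H(s)}{s}\md s$, so that $H(t)=tG'(t)$, this becomes $G'(t)-\frac{C}{\alpha t}G(t)\le\frac{C_1}{\alpha}(t^{-1}+t^{-\eps})$, and multiplying by the integrating factor $t^{-C/\alpha}$ and integrating from $1$ to $t$ — using $\eps+C/\alpha<1$ — gives $t^{-C/\alpha}G(t)=O(1)+O(t^{1-\eps-C/\alpha})$, i.e.\ $G(t)=O(t^{C/\alpha})+O(t^{1-\eps})=O(t^{1-\eps})$ because $C/\alpha<1-\eps$; back-substituting, $H(t)=O(t^{1-\eps})$. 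Thus $\frac1t\int_0^t\phi(r)\md r=O(t^{-\eps})$. Finally, from $\cW_2(\cE_t[Y],\im)^2\le\frac2t\int_0^t|Y_r-X_r|^2\md r+2\,\cW_2(\cE_t[X],\im)^2$, taking $\E^y$ and invoking Theorem~\ref{thm-01} once more yields $\E^y\cW_2(\cE_t[Y],\im)^2=O(t^{-\eps})$, as claimed.

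The main obstacle is precisely this integro-differential inequality: in contrast to the Markovian setting and to Theorem~\ref{thm-01}, the empirical argument $\cE_t[Y]$ feeds the \emph{time-averaged} error $\frac1t\int_0^t\phi$ back into the evolution of $\phi$, and controlling this feedback is what both forces the strict inequality $\beta<\alpha$ (so that the effective coefficient $C$ can be kept below the dissipativity $\alpha$) and caps the achievable rate at $1-\beta/\alpha$; one also has to secure beforehand the uniform-in-$t$ moment bound on $Y$ that legitimizes the It\^o computation and keeps $\phi$ under control. The second threshold $\tfrac{\rho}{(d+2)(\rho+2)}$ only reflects the quality of the input $\E^y\cW_2(\cE_t[X],\im)^2=O(t^{-\eps})$ imported from Theorem~\ref{thm-01}.
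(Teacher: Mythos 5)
Your proposal is correct, but it takes a genuinely different route from the paper. The paper proves the (more general, weighted) Theorem~\ref{thm-wemv} by coupling $Y$ with the \emph{frozen-coefficient} Markovian SDE $\md \hat{X}_{t}=b(\hat{X}_{t},\im)\md t+\sigma(\hat{X}_{t},\im)\md W_{t}$, controlling $\E\big[\cW_2(\cE^{\wl}_t[\hat X],\im)^2\big]$ by Proposition~\ref{lem-aux} (the mollification/density-coupling estimate), and then invoking the comparison Lemma~\ref{lem-comp} for the inequality $f'(t)\le-\alpha f(t)+\hat\beta\int_0^1 f(ts)\,\wl_t(\md s)+Ct^{-\kappa\eps_2}$, which yields a \emph{pointwise} decay $\E|Y_t-\hat X_t|^2\le Ct^{-\eps}$ valid for arbitrary weight families in $\Pi_1,\Pi_2$. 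You instead couple $Y$ with the McKean--Vlasov process $X$ itself, import Theorem~\ref{thm-01} as a black box for $\E^y\big[\cW_2(\cE_t[X],\im)^2\big]$ (legitimate and non-circular, since Theorem~\ref{thm-01} is established via Theorem~\ref{thm-wmv} independently of Theorem~\ref{thm-02}; you additionally use the exponential convergence of $\cL_t[X]$ to $\im$, which the paper also cites), and close a Gr\"onwall-type inequality for the running average $H(t)=\int_0^t\E^y|Y_r-X_r|^2\md r$ by an integrating-factor computation; your bookkeeping of constants ($C=\beta(1+\delta)<\alpha$, $\eps<1-C/\alpha$) correctly recovers every $\eps<(1-\beta/\alpha)\wedge\kappa$. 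What your approach buys is economy: no second pass through the mollification machinery and no need for a pointwise bound on $\E|Y_t-X_t|^2$, since the Lebesgue-weight empirical distance only sees the Ces\`aro average. What the paper's approach buys is generality: the pointwise decay from Lemma~\ref{lem-comp} is what allows arbitrary sampling weights $\wp$ (e.g.\ $\wp_t=\delta_1$ or delayed/discrete sampling), for which your average-only control would not suffice; also, comparing against $\hat X$ rather than $X$ is what makes the weighted law $\cL^{\wl}_t[X]$ tractable in the general Theorem~\ref{thm-wemv}. Minor rigor points in your write-up (differentiability of $\phi$ versus the integrated It\^o form, integrability of the martingale term, boundedness of $\eta$ near $t=0$) are at the same level of detail as the paper's own proofs and do not constitute gaps.
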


Simulating the trajectory of $Y$ is quite straightforward and standard because it depends only on the past of itself.
This is a big advantage by using~\eqref{eqn-emvsde} to approximate the invariant measure for~\eqref{eqn-mvsde}.
\smallskip

To enhance the generality and practicality of the results, we extend our investigation to generalized empirical measures and particle systems.

Empirical measures of form \eqref{eqn-emp} may be most commonly used, but not the most convenient or practical.
Actually, delayed and/or discrete sampling is much common in practice;
for instance, the empirical measure of the process $Y$ may have the from
\begin{equation}\label{dicrete}
\frac{1}{n_t} \sum_{k=0}^{n_t-1} \delta_{Y_{(k\tau - \theta)\vee 0}}\quad
\text{with}\ n_t = [t/\tau],
\end{equation}
where $\tau > 0$ is the sampling period and $\theta \ge 0$ is the delay.

To cover this situation and extend as far as possible, 
we generalize \eqref{eqn-emp} to the weighted empirical measure of the following form
\begin{equation}\label{eq-Pi}
\cE^{{\wl}}_t[Z] := \int_0^1 \delta_{Z_{ts}} \,{\wl}_t (\md s)
\quad \text{ for }
{\wl}\in\Pi:=\big\{{\wl} = ({\wl}_t)_{t\ge 0}: {\wl}_t \in\cP[0,1]\big\},
\end{equation}
where $\cP[0,1]$ is the space of probability measures on $[0,1]$.
Clearly, \(\cE_{t}\) define in~\eqref{eqn-emp} can be represented as \(\cE^{{\wl}}_{t}\) with \({\wl}_{t}\) being the Lebesgue measure.

For ${\wl},{\wp} \in\Pi$, we introduce the following equation
\begin{equation*}
		\md Y_{t}=b(Y_{t},\cE^{{\wl}}_{t}[Y])\md t+\sigma(Y_{t},\cE^{{\wl}}_{t}[Y])\md W_{t},
\end{equation*}
and expect the convergence
\begin{equation*}\label{eq-dist}
\E^{y}\big[ \cW_{2}(\cE_{t}^{{\wp}}[Y],\im)^{2} \big]
= O(t^{-\eps}).
\end{equation*}
In the other words, the empirical measures used in the equation and in the approximation can be generalized simultaneously, and differently.
Then our aim is to find suitable conditions satisfied by ${\wl}$ and ${\wp}$ for the desired convergence.

Furthermore, we combine the empirical approximation with the classical multi-particle approximation.
Consider the following particle system driven by independent Brownian motions \(W^{i}\):
\[
	\md Y_{t}^{i}=b\bigg(Y_{t}^{i},\frac{1}{N}\sum_{j=1}^{N}\cE_{t}^{{\wl}}[Y^{j}]\bigg)\md t +\sigma\bigg(Y_{t}^{i},\frac{1}{N}\sum_{j=1}^{N}\cE_{t}^{{\wl}}[Y^{j}]\bigg)\md W_{t}^{i}.
\]
Intuitively, as more particles are involved, the approximation by $\frac{1}{N}\sum_{i=1}^{N}\cE_{t}^{{\wp}}[Y^{i}]$
to the invariant measure $\im$ for~\eqref{eqn-mvsde}
should be more efficient than $\cE_{t}^{{\wp}}[Y]$.
This idea is clarified theoretically in Theorem~\ref{thm-multi} 
and demonstrated numerically in Section 6.
\smallskip

Our approach is based on the estimates of the Wasserstein distance (or Kantorovich--Rubinstein metric) 
between the empirical measure and the invariant measure.
We remark that if $\cL_0[X] = \im$ then $X$ becomes a stationary process and Birkhoff's ergodic theorem may apply to obtain an empirical law of large numbers, but it is very likely that the ergodic theorem does not work in the general case, namely $\cL_0[X] = \nu \neq \im$.
The Wasserstein distance is originated from optimal transport theory (cf.~\cite{villani2009optimal}):
for two probability measures $\mu$ and $\nu$ on $\bR^d$, the $p$-Wasserstein distance between them
is defined as
\[
\cW_{p}(\mu,\nu) := \inf \big\{(\E|\xi-\eta|^p)^{1/p}:\mathrm{law}(\xi)=\mu,\ \mathrm{law}(\eta)=\nu\big\}.
\]
Convergence in Wasserstein distance for empirical measures have been discussed under the setting of i.i.d. random variables in \cite{horowitz1994mean,fournier2015rate,ambrosio2019pde} and references therein, of Markov chains 
in~\cite{boissard2014mean,riekert2021wasserstein},
and of diffusions with symmetric 
and non-degenerate generators in \cite{wang2019limit,wang2020wasserstein}.
To the best of our knowledge, we are the first to extend the results to nonlinear diffusions and even for path-dependent processes. 

A key step in our argument is to compare the distribution/path dependent SDE with the following Markovian SDE 
\[	
	\md \hat{X}_{t}=b(\hat{X}_{t},\im)\md t +\sigma(\hat{X}_{t},\im)\md W_{t},
\]
where $\im$ is the invariant measure for~\eqref{eqn-mvsde}.
The results in \cite{wang2019limit,wang2020wasserstein} do not seem to apply to this equation
since it is neither symmetric nor non-degenerate.
With the help of a density coupling lemma from \cite{horowitz1994mean},
we prove that (see Proposition~\ref{lem-aux}) 
\begin{equation*}\label{eq-sde}
\E^{\nu} [\cW_{2}(\cE_{t}^{\wp}[\hat{X}],\im)^{2}] = O(t^{-\eps}),\quad \nu\in\cP_{2+\rho}
\end{equation*}
under Assumption~\ref{assump} and a proper condition on ${\wp}$.
This is a new result with respect to classical SDEs.
We remark that, compared to the sharp results in~\cite{ambrosio2019pde,wang2019limit},
the convergence rate can probably be optimized.
This is left to further work as it is not essential to the primary goal of this paper.

The well-posedness of equations we are concerned with is ensured by Theorem~\ref{thm-well-posed} 
that embeds the equations into a class of path-distribution dependent SDEs
and makes use of a weaker version of Assumption~\ref{assump}.
We do not assume the growth condition on $b$ or the Lipschitz property of $\sigma$, 
which are quite common in the existing works such as~\cite{wang2018distribution, huang2017nonlinear}.
With the well-posedness in hand, the existence and uniqueness of invariant measures for~\eqref{eqn-mvsde}
can be proved under Assumption~\ref{assump} by a similar argument as in~\cite{wang2018distribution}.
\smallskip

Let us give some more remarks on this work and related literature.
The main contribution of this paper is to build a theoretical foundation for designing online algorithms to evaluate 
invariant measures for a class of McKean--Vlasov processes.
As we mentioned above, an approach based on the propagation of chaos to approximate the invariant measure has been discussed in \cite{veretennikov2006ergodic}.
We stress that our empirical approximation overcomes two shortages of the classical propagation of chaos.
First and foremost, we provide an unbiased approximation as \(t\) goes to infinity,
and even one trajectory is enough for computing the approximating measures;
on the other hand, the propagation of chaos approximates the McKean--Vlasov process by an \(N\)-particle system leading to a sampling bias which will not diminish with time (as $N$ is fixed in computation).
Second, given a desired accuracy, the particle number \(N\) cannot be predetermined,
and one has to recalculate from the initial time if an increase of the particle number is needed;
by contrast, our algorithm allows us to make full use of the history information.
To some extent, we do compute infinite sample paths by admitting the equivalence between the temporal and spatial average.
A similar algorithm has been suggested
in a very recent work \cite{alrachid2019new} but without rigorous discussion.

The empirical measure dependent SDE~\eqref{eqn-emvsde} relates to an important class of continuous-time processes called self-interacting diffusions that were initially used to model the evolution of polymers (see~\cite{durrett1992asymptotic,benaim2002self,pemantle2007survey} for details and historical remarks).
A typical form of equations for self-interacting diffusions is
\begin{equation*}
\md Y_{t}= \sqrt{2} \md W_{t} - \frac{1}{t} \bigg[\int_0^t \nabla V(Y_t, Y_s) \md s\bigg] \md t,
\end{equation*}
where $V$ is a potential function. 
Convergence of empirical measures is a most concerned issue for those processes.
Thanks to the gradient form of the drift term, the limit measure, if it exists, is a fixed point of the following mappings (cf.~\cite{benaim2002self}): 
\[
\nu\mapsto \mathcal{M}(\nu)(\md x) := \frac{\me^{-V(x,\cdot)* \nu}}{\mathcal{Z}(\nu)} \md x
\quad \text{with}\ 
\mathcal{Z}(\nu)= \int_{\bR^d} \me^{-V(x,\cdot)* \nu} \md x .
\]
On the other hand, the fixed points of this mapping also coincide with the invariant measures for the associated McKean--Vlasov SDE (cf.~\cite{carrillo2003kinetic,kleptsyn2012ergodicity}).
In some sense, this work extends the research on self-interacting diffusions non-trivially to a general setting.

Another contribution of this work is to establish a level of equivalence between mean-field interaction and self-interaction,
in the sense that large particle systems with mean-field interaction and with self-interaction may evolve to the same equilibrium state eventually.
This inspires that, in some circumstances, the role of one type of interaction can be replaced by the other;
for example, when the mean-field information is temporarily absent for an agent,
an alternative could be to use its past information instead.
This idea may have potential applications in mean-field multi-agent reinforcement learning (cf.~\cite{yang2018mean,gu2021mean}).

In this paper we focus on the McKean--Vlasov process that has a unique invariant measure, and for this,
we employ a weaker version of the condition used in a recent paper~\cite{wang2018distribution}. 
Actually, earlier results on existence-uniqueness of invariant measures of McKean--Vlasov processes all rely on certain monotonicity conditions; see~\cite{ahmed1993invariant,benachour1998nonlinearI,
veretennikov2006ergodic} for example.
For existence only, some sufficient conditions without monotonicity are proposed in~\cite{ahmed1993invariant,hammersley2021mckean,zhang2021existence,bao2021existence}.
\cite{zhang2021existence}~also obtains some conditions under which McKean--Vlasov processes
have more than one invariant measures.
Long-time convergence of the distributions to the invariant
measure is considered in~\cite{benachour1998nonlinearII,veretennikov2006ergodic,
bogachev2019convergence,eberle2019quantitative} under various settings, 
and the exponential convergence in the Wasserstein distance or the entropy is also
investigated in~\cite{carrillo2003kinetic,
eberle2019quantitative,ren2020exponential,liu2020long,
wang2101exponential} and so on.
To our best knowledge, there is no published work so far addressing the convergence of empirical measures for McKean--Vlasov processes. 

We remark that Assumption~\ref{assump} is by no means optimal for existence and uniqueness of invariant measures for~\eqref{eqn-mvsde}, but is also sharp in the sense that the requirement $\alpha > \beta$ cannot be removed.
For example, the equation 
\[
\md X_t = -(X_t - \E X_t) \md t + \sqrt{2} \md W_t,
\]
satisfies \eqref{eqn-mono} with $\alpha=\beta=1$, but the normal distribution $\mathcal{N}(m, 1)$ with each $m\in\bR$ is an invariant measure for it;
according to Example 2.6 in \cite{zhang2021existence}, the McKean--Vlasov SDE
\[
\md X_{t}=-[X_{t}^{3}-4X_{t}+2\lambda(X_{t}-\mathbb{E}X_{t})]\md t+\md W_{t}
\]
admits more than one invariant measure if $\lambda \ge 6$, while it satisfies \eqref{eqn-mono} with $\alpha = \lambda-4$ and $\beta  = \lambda$, that means
the ratio $\alpha/\beta$ can be arbitrarily close to $1$.
Of course, convergence of empirical measures without uniqueness of invariant measures is another
interesting topic which deserves further study.
\smallskip

The remainder of the paper is organized as follows.
In Section 2, we state the main theorems.
Section 3 obtains the convergence of empirical measures for Markovian SDEs.
In Section 4, we prove Theorems~\ref{thm-wmv} and~\ref{thm-wemv}, the generalized versions of Theorems~\ref{thm-01} and \ref{thm-02}, respectively.
Section 5 discusses the empirical approximation via multi-particle systems.
In Section 6, we demonstrate our theoretical results by a mean-field Ornstein--Uhlenbeck process.
Section 7 is devoted to the well-posedness of a class of path-distribution dependent SDEs that can cover the equations
we considered in this paper.

\section{Main results}
Throughout this paper, Assumption~\ref{assump} is fulfilled and $\im \in \cP_2$ denotes the unique invariant 
measure for~\eqref{eqn-mvsde}. Set
\begin{equation}\label{kappa}
\kappa = \frac{\rho}{(d+2)(\rho+2)}.
\end{equation}

Recall \eqref{eq-Pi} for the definitions of the set $\Pi$ and the weighted empirical measure~$\cE^\pi_t$
with weight family~$\pi\in\Pi$. 
For any \(\eps\in (0, 1]\), we introduce two classes of weight families as follows:
\begin{equation}\label{eqn-Pi}
\begin{aligned}
	\Pi_1(\eps) & =\bigg\{ {\wl}\in\Pi:\limsup_{t\to \infty}\int_0^1 s^{-\eps}\,{\wl}_{t}(\md s)<\frac{\alpha}{\beta}\bigg\}, \\
	\Pi_2(\eps) & =\bigg\{ {\wl}\in\Pi:\limsup_{t\to\infty}\int_0^1 t^{\eps}\wedge s^{-\eps} \,{\wl}_{t}(\md s)<\infty ,\\
	&\qquad\qquad\quad\limsup_{t\to\infty}\int_0^1 \!\!\int_0^1 t^{\eps}\wedge |s_{1}-s_{2}|^{-\eps}
	\,{\wl}_{t}(\md s_{1}){\wl}_{t}(\md s_{2})<\infty\bigg\}.
\end{aligned}
\end{equation}

The following theorems are extensions of Theorems~\ref{thm-01} and \ref{thm-02}, respectively.

\begin{thm}
	\label{thm-wmv}
	Let \({\wl}\in\Pi_1(\eps_{1})\) and \({\wp}\in\Pi_2(\eps_{2})\) with \(\eps_{1},\,\eps_{2}\in(0,1]\),
	and \(X\) satisfy the equation
	\begin{align}
		\label{eqn-wmv}
		\md X_{t} & =b(X_{t},\cL^{{\wl}}_{t}[X])\md t+\sigma(X_{t},\cL^{{\wl}}_{t}[X])\md W_{t}\\
\nonumber	& \text{with}\quad \cL^{{\wl}}_{t}[X]:=\int_0^1 \cL_{ts}[X]\,{\wl}(\md s).
\end{align}
	Then for each \(\nu\in\cP_{2+\rho}\), there exists a constant \(C\) such that
	\begin{equation*}
		\E^{\nu}\big[ \cW_{2}(\cE_{t}^{{\wp}}[X],\im)^{2} \big]\leq C t^{-\eps},
	\end{equation*}
	where $\eps = \eps_{1}\wedge\kappa\eps_{2}$.
\end{thm}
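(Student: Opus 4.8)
\textbf{Proof strategy for Theorem~\ref{thm-wmv}.}
The plan is to compare the weighted McKean--Vlasov process $X$ solving~\eqref{eqn-wmv} with the auxiliary \emph{Markovian} diffusion $\hat X$ governed by $\md\hat X_t=b(\hat X_t,\im)\md t+\sigma(\hat X_t,\im)\md W_t$, driven by the same Wiener process $W$ and started from the same $\xi$ with $\cL(\xi)=\nu$ (both equations are well posed by Theorem~\ref{thm-well-posed}), and then to feed $\hat X$ into Proposition~\ref{lem-aux}. Freezing the measure argument at $\im$ in~\eqref{eqn-mvsde} shows that $\im$ is invariant for $\hat X$; applying~\eqref{eqn-mono} with $\mu=\nu=\im$ to two copies of $\hat X$ coupled through $W$ gives $\cW_2(\cL_t[\hat X],\im)^2\le e^{-\alpha t}\cW_2(\nu,\im)^2$. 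The Lyapunov-type second inequality of Assumption~\ref{assump} yields $\sup_{t\ge0}\big(\E|X_t|^2+\E|\hat X_t|^2\big)<\infty$, hence a finite bound on $f(t):=\E|X_t-\hat X_t|^2$.

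\emph{Step 1: $f(t)=O(t^{-\eps_1})$.} It\^o's formula together with~\eqref{eqn-mono} applied with $x=X_t$, $y=\hat X_t$, $\mu=\cL^{{\wl}}_t[X]$, $\nu=\im$ gives $\frac{\md}{\md t}f(t)\le-\alpha f(t)+\beta\,\cW_2\big(\cL^{{\wl}}_t[X],\im\big)^2$. Joint convexity of $\cW_2^2$ applied to $\cL^{{\wl}}_t[X]=\int_0^1\cL_{ts}[X]\,{\wl}_t(\md s)$, the triangle inequality with a Young parameter $\theta>0$, and the elementary bound $e^{-\alpha r}\le C_{\eps_1}r^{-\eps_1}$ ($r>0$) give
\[
\cW_2\big(\cL^{{\wl}}_t[X],\im\big)^2\le(1+\theta)\int_0^1 f(ts)\,{\wl}_t(\md s)+C_\theta\,t^{-\eps_1}\!\int_0^1 s^{-\eps_1}\,{\wl}_t(\md s).
\]
Because ${\wl}\in\Pi_1(\eps_1)$, one can fix $\lambda<\alpha/\beta$ and $T_1$ with $\int_0^1 s^{-\eps_1}{\wl}_t(\md s)\le\lambda$ for $t\ge T_1$, and then $\theta$ so small that $\beta(1+\theta)\lambda<\alpha$. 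Setting $\phi(t)=t^{\eps_1}f(t)$ and using $t^{\eps_1}f(ts)=s^{-\eps_1}\phi(ts)$ converts the above into the closed inequality
\[
\frac{\md}{\md t}\phi(t)\le\Big(\frac{\eps_1}{t}-\alpha\Big)\phi(t)+\beta(1+\theta)\int_0^1 s^{-\eps_1}\phi(ts)\,{\wl}_t(\md s)+C''\qquad(t\ge T_1).
\]
A first-time-exceedance argument then bounds $\phi$ on $[T_1,\infty)$: if $\phi$ attains a running maximum $M$ at a large time $t$, the right-hand side is at most $\big(\frac{\eps_1}{t}-\alpha+\beta(1+\theta)\lambda\big)M+(\text{lower order})$, which is negative once $M$ is large, so $\phi$ cannot cross above such an $M$. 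Hence $f(t)\le\hat C\,(1\wedge t^{-\eps_1})$ for all $t>0$, and in particular $\cW_2(\cL_t[X],\im)^2=O(t^{-\eps_1})$.

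\emph{Step 2: transfer to the empirical measures.} For each $\omega$ the probability measure $\int_0^1\delta_{(X_{ts},\hat X_{ts})}\,{\wp}_t(\md s)$ on $\bR^d\times\bR^d$ couples $\cE^{{\wp}}_t[X]$ and $\cE^{{\wp}}_t[\hat X]$, so pathwise $\cW_2(\cE^{{\wp}}_t[X],\cE^{{\wp}}_t[\hat X])^2\le\int_0^1|X_{ts}-\hat X_{ts}|^2\,{\wp}_t(\md s)$; taking expectations and invoking Step~1 with $\eps:=\eps_1\wedge\kappa\eps_2\le\eps_1$ (so $f(r)\le\hat C(1\wedge r^{-\eps})$) and the identity $1\wedge(ts)^{-\eps}=t^{-\eps}(t^{\eps}\wedge s^{-\eps})$,
\[
\E^{\nu}\big[\cW_2(\cE^{{\wp}}_t[X],\cE^{{\wp}}_t[\hat X])^2\big]\le\int_0^1 f(ts)\,{\wp}_t(\md s)\le\hat C\,t^{-\eps}\!\int_0^1\big(t^{\eps}\wedge s^{-\eps}\big)\,{\wp}_t(\md s).
\]
Since $\eps\le\kappa\eps_2<\eps_2$ one has $t^{\eps}\wedge s^{-\eps}\le t^{\eps_2}\wedge s^{-\eps_2}$ for $s\in(0,1]$, so ${\wp}\in\Pi_2(\eps_2)$ makes the last integral bounded in $t$ and the left-hand side is $O(t^{-\eps})$. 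On the other hand, Proposition~\ref{lem-aux} applied to $\hat X$ (using $\nu\in\cP_{2+\rho}$ and ${\wp}\in\Pi_2(\eps_2)$) gives $\E^{\nu}[\cW_2(\cE^{{\wp}}_t[\hat X],\im)^2]=O(t^{-\kappa\eps_2})$. Combining these through $\cW_2(\cE^{{\wp}}_t[X],\im)^2\le2\cW_2(\cE^{{\wp}}_t[X],\cE^{{\wp}}_t[\hat X])^2+2\cW_2(\cE^{{\wp}}_t[\hat X],\im)^2$ and taking $\E^{\nu}$ gives $\E^{\nu}[\cW_2(\cE^{{\wp}}_t[X],\im)^2]=O(t^{-\eps})$ with $\eps=\eps_1\wedge\kappa\eps_2$, as claimed.

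The hard part is Step~1: closing the self-referential estimate for $f$ at the sharp threshold $\alpha/\beta$ built into the definition of $\Pi_1(\eps_1)$. The unknown $f$ reappears on the right-hand side at the rescaled times $ts$, $s\in(0,1]$ (so a direct Gr\"onwall argument is not available); the range of small $s$, where $f(ts)$ is merely bounded rather than small, must be shown to contribute only lower-order terms; and the coefficient multiplying the ``gain'' term $\int_0^1 s^{-\eps_1}\phi(ts)\,{\wl}_t(\md s)$ must be kept strictly below $\alpha$ --- which is exactly why $\Pi_1(\eps_1)$ is defined with the constant $\alpha/\beta$, and why the triangle inequality must be split with an arbitrarily small Young parameter $\theta$ rather than with a crude factor $2$. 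Everything after Step~1 is soft: joint convexity of $\cW_2^2$, the Lipschitz-free coupling of empirical measures along matched times, the two weight-class conditions, and Proposition~\ref{lem-aux}.
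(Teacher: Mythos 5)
Your proposal is correct and follows essentially the same route as the paper: synchronously couple $X$ with the frozen-measure Markovian diffusion $\hat X$, use the monotonicity condition to get the delay-type differential inequality for $f(t)=\E|X_t-\hat X_t|^2$, deduce $f(t)=O(t^{-\eps_1})$, transfer to the weighted empirical measures by the pathwise coupling $\int_0^1\delta_{(X_{ts},\hat X_{ts})}\,\wp_t(\md s)$, and conclude via Proposition~\ref{lem-aux} and the triangle inequality. The only cosmetic difference is that your Step~1 reproves the paper's comparison Lemma~\ref{lem-comp} inline (via the barrier/first-crossing argument for $t^{\eps_1}f(t)$, with the Young parameter $\theta$ playing the role of the paper's $\hat\beta\in(\beta,\alpha)$), which is the same mechanism as the paper's proof of that lemma.
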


\begin{thm}
	\label{thm-wemv}
	Let \({\wl}\in\Pi_1(\eps_{1}) \cap \Pi_2(\eps_{2})\) and \({\wp}\in\Pi_2(\eps_{2})\) with \(\eps_{1},\,\eps_{2}\in(0,1]\), and \(Y\) satisfy the equation
\begin{equation}\label{eqn-wemv}
\md Y_{t}=b(Y_{t},\cE^{{\wl}}_{t}[Y])\md t+\sigma(Y_{t},\cE^{{\wl}}_{t}[Y])\md W_{t}.
\end{equation}
	Then for each \(y\in {\bR}^{d}\), there exists a constant \(C\) such that
	\begin{equation*}
		\E^{y}\big[ \cW_{2}(\cE_{t}^{{\wp}}[Y],\im)^{2} \big]\leq C t^{-\eps},
	\end{equation*}
	where $\eps = \eps_{1}\wedge\kappa\eps_{2}$.
\end{thm}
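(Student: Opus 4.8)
The plan is to compare $Y$ with the Markovian reference diffusion $\hat X$ driven by the same Brownian motion $W$ and started from the same point, namely $\md \hat X_{t}=b(\hat X_{t},\im)\md t+\sigma(\hat X_{t},\im)\md W_{t}$ with $\hat X_{0}=y$; well-posedness of both equations is supplied by Theorem~\ref{thm-well-posed}. Since $\im$ is invariant for~\eqref{eqn-mvsde} it is invariant for $\hat X$, and by the triangle inequality $\cW_{2}(\cE_{t}^{\wp}[Y],\im)\le \cW_{2}(\cE_{t}^{\wp}[Y],\cE_{t}^{\wp}[\hat X])+\cW_{2}(\cE_{t}^{\wp}[\hat X],\im)$. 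The second term is handled directly by Proposition~\ref{lem-aux}: as $\wp\in\Pi_{2}(\eps_{2})$ and $\delta_{y}\in\cP_{2+\rho}$, one gets $\E^{y}[\cW_{2}(\cE_{t}^{\wp}[\hat X],\im)^{2}]=O(t^{-\kappa\eps_{2}})$. For the first term, the synchronous coupling $\int_{0}^{1}\delta_{(Y_{ts},\hat X_{ts})}\,\wp_{t}(\md s)$ of the two weighted empirical measures gives, pathwise, $\cW_{2}(\cE_{t}^{\wp}[Y],\cE_{t}^{\wp}[\hat X])^{2}\le\int_{0}^{1}|Y_{ts}-\hat X_{ts}|^{2}\,\wp_{t}(\md s)$, so everything reduces to estimating $u(t):=\E^{y}|Y_{t}-\hat X_{t}|^{2}$ and then $\int_{0}^{1}u(ts)\,\wp_{t}(\md s)$.

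To bound $u$, apply It\^o's formula to $|Y_{t}-\hat X_{t}|^{2}$, take expectations, and use the monotonicity inequality~\eqref{eqn-mono} with $(x,\mu)=(Y_{t},\cE_{t}^{\wl}[Y])$ and $(y,\nu)=(\hat X_{t},\im)$ to obtain $u'(t)\le -\alpha u(t)+\beta\,\E^{y}[\cW_{2}(\cE_{t}^{\wl}[Y],\im)^{2}]$ (in integral form, with $u$ finite and continuous on bounded intervals and $u(0)=0$). Splitting $\cW_{2}(\cE_{t}^{\wl}[Y],\im)^{2}\le(1+\delta)\cW_{2}(\cE_{t}^{\wl}[Y],\cE_{t}^{\wl}[\hat X])^{2}+(1+\delta^{-1})\cW_{2}(\cE_{t}^{\wl}[\hat X],\im)^{2}$ for small $\delta>0$, using the synchronous coupling again on the first piece and Proposition~\ref{lem-aux} (with the weight $\wl\in\Pi_{2}(\eps_{2})$) on the second to set $g(t):=\E^{y}[\cW_{2}(\cE_{t}^{\wl}[\hat X],\im)^{2}]=O(t^{-\kappa\eps_{2}})$, we arrive at the closed memory inequality
\[
u'(t)\le -\alpha u(t)+\beta(1+\delta)\int_{0}^{1}u(ts)\,\wl_{t}(\md s)+\beta(1+\delta^{-1})g(t).
\]

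The heart of the argument — the step I expect to be the main obstacle — is to extract from this inequality the rate $u(t)=O(t^{-\eps})$ with $\eps=\eps_{1}\wedge\kappa\eps_{2}$. First, since $\int_{0}^{1}u(ts)\,\wl_{t}(\md s)\le\sup_{r\le t}u(r)$ and $\alpha>\beta$ (so $\beta(1+\delta)<\alpha$ for $\delta$ small), variation of constants gives $\sup_{r\le t}u(r)\le\alpha^{-1}\|g\|_{\infty}/(1-\beta(1+\delta)/\alpha)$, i.e. $u$ is bounded. Then put $\bar u(t):=\sup_{r\le t}(1+r)^{\eps}u(r)<\infty$ and use $(1+ts)^{-\eps}\le(1+t)^{-\eps}s^{-\eps}$ for $s\in(0,1]$ to get $(1+t)^{\eps}\int_{0}^{1}u(ts)\,\wl_{t}(\md s)\le\bar u(t)\int_{0}^{1}s^{-\eps}\,\wl_{t}(\md s)$; since $\wl\in\Pi_{1}(\eps_{1})$ and $\eps\le\eps_{1}$, $\limsup_{t}\int_{0}^{1}s^{-\eps}\,\wl_{t}(\md s)\le\limsup_{t}\int_{0}^{1}s^{-\eps_{1}}\,\wl_{t}(\md s)<\alpha/\beta$, so $\delta$ can be fixed with $\ell:=\beta(1+\delta)\limsup_{t}\int_{0}^{1}s^{-\eps}\,\wl_{t}(\md s)<\alpha$. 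Writing $u$ once more by variation of constants, the crucial point is that the resolvent kernel concentrates near the diagonal: $\lim_{t\to\infty}\int_{0}^{t}e^{-\alpha(t-r)}\big((1+t)/(1+r)\big)^{\eps}\,\md r=1/\alpha$ while its mass on $\{r\le(1-\eta)t\}$ vanishes; combined with $(1+t)^{\eps}g(t)=O(1)$ this yields, for $t$ large, $(1+t)^{\eps}u(t)\le q\,\bar u(t)+C$ with $q:=\ell/\alpha<1$. A maximum argument (if $\bar u(t)$ is large it is essentially attained at some $r^{*}\le t$, where the last bound gives a contradiction) then forces $\bar u$ to be bounded, i.e. $u(t)=O(t^{-\eps})$. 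The delicacy is that the naive estimate $\int_{0}^{1}u(ts)\,\wl_{t}(\md s)\le\sup u$ loses the gap $\alpha-\beta$; it is precisely the strict inequality built into $\Pi_{1}(\eps_{1})$ together with the sharp constant $1/\alpha$ in the kernel mass that lets the bootstrap close.

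Finally, transfer back: $\E^{y}[\cW_{2}(\cE_{t}^{\wp}[Y],\cE_{t}^{\wp}[\hat X])^{2}]\le\int_{0}^{1}u(ts)\,\wp_{t}(\md s)\le C\int_{0}^{1}(1+ts)^{-\eps}\,\wp_{t}(\md s)\le Ct^{-\eps}\int_{0}^{1}(t^{\eps}\wedge s^{-\eps})\,\wp_{t}(\md s)$, which is $O(t^{-\eps})$ because $\wp\in\Pi_{2}(\eps_{2})$ and $\eps\le\eps_{2}$ imply $t^{\eps}\wedge s^{-\eps}\le t^{\eps_{2}}\wedge s^{-\eps_{2}}$ for $t\ge1$, $s\in(0,1]$, so $\limsup_{t}\int_{0}^{1}(t^{\eps}\wedge s^{-\eps})\,\wp_{t}(\md s)<\infty$. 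Combining this with $\E^{y}[\cW_{2}(\cE_{t}^{\wp}[\hat X],\im)^{2}]=O(t^{-\kappa\eps_{2}})=O(t^{-\eps})$ via $(a+b)^{2}\le2a^{2}+2b^{2}$ gives $\E^{y}[\cW_{2}(\cE_{t}^{\wp}[Y],\im)^{2}]\le Ct^{-\eps}$, as claimed. The whole scheme parallels the proof of Theorem~\ref{thm-wmv}, the only new feature being the extra memory term $\int_{0}^{1}u(ts)\,\wl_{t}(\md s)$ produced by replacing the law $\cL^{\wl}_{t}$ with the empirical measure $\cE^{\wl}_{t}$ in the equation for $Y$.
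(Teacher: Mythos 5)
Your proposal is correct, and its skeleton is exactly the paper's: compare $Y$ with the Markovian diffusion $\hat X$ driven by the same Brownian motion, control $\E^{y}[\cW_{2}(\cE^{\wp}_{t}[\hat X],\im)^{2}]$ by Proposition~\ref{lem-aux}, use It\^o's formula with Assumption~\ref{assump} and the synchronous coupling of the weighted empirical measures, and split off $\cW_{2}(\cE^{\wl}_{t}[\hat X],\im)^{2}$ with a constant $\hat\beta=\beta(1+\delta)\in(\beta,\alpha)$ to arrive at the delay inequality $u'(t)\le-\alpha u(t)+\hat\beta\int_{0}^{1}u(ts)\,\wl_{t}(\md s)+Ct^{-\kappa\eps_{2}}$. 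The one place you genuinely diverge is in solving this inequality: the paper quotes Lemma~\ref{lem-comp}, whose proof compares $u$ with the explicit supersolution $t^{-\eps}$ through a barrier function $h(t)=[m^{-1}M+T^{\eps}u(T)]t^{-\eps}-u(t)$, whereas you re-derive the same decay rate from scratch by variation of constants, the weighted supremum $\bar u(t)=\sup_{r\le t}(1+r)^{\eps}u(r)$, the kernel limit $\int_{0}^{t}\me^{-\alpha(t-r)}\bigl((1+t)/(1+r)\bigr)^{\eps}\md r\to 1/\alpha$, and a maximum/bootstrap argument that closes precisely because $\wl\in\Pi_{1}(\eps_{1})$ forces $\hat\beta\limsup_{t}\int_{0}^{1}s^{-\eps}\,\wl_{t}(\md s)<\alpha$. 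Both routes are sound; the paper's barrier comparison is shorter, while your bootstrap is self-contained, makes explicit that an a priori boundedness step is needed, and shows transparently where the strict gap $\alpha>\beta$ is consumed. A small bonus of your write-up is the final transfer step, where you bound $\int_{0}^{1}u(ts)\,\wp_{t}(\md s)$ by $t^{-\eps}\int_{0}^{1}(t^{\eps}\wedge s^{-\eps})\,\wp_{t}(\md s)$ and compare with the $\Pi_{2}(\eps_{2})$ condition via $t^{\eps}\wedge s^{-\eps}\le t^{\eps_{2}}\wedge s^{-\eps_{2}}$; this is slightly more careful than the paper's corresponding line, which uses the bound $\int_{0}^{1}s^{-\eps}\,\wp_{t}(\md s)$ without the $t^{\eps}$ cap.
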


\begin{rmk}
Theorem~\ref{thm-well-posed} ensures the well-posedness of \eqref{eqn-wmv} and \eqref{eqn-wemv}, 
as well as the continuity of paths of \(X\) and \(Y\),
which is necessary for the above results.
\end{rmk}

\begin{rmk}
Theorem~\ref{thm-01} follows from Theorem~\ref{thm-wmv} by 
taking $\wl_t \equiv \delta_1 $ and $\wp_t$ to be the Lebesgue measure on $[0,1]$.
In this case, it is easily verified that $\eps_1$ can be $1$ 
and $\eps_2$ can be any positive number less than $1$.
Analogously, Theorem~\ref{thm-02} follows from Theorem~\ref{thm-wemv} with $\wl_t$ and $\wp_t$ both the Lebesgue measure, $\eps_1 < 1-\beta/\alpha$ and $\eps_2<1$.
\end{rmk}

\begin{exam}
Discrete and/or delayed sampling is very common in digital systems.
Let us recall the empirical measure~\eqref{dicrete} and denote it by $\hat{\cE}_{t}$. 
Then we consider
\begin{equation}
	\label{eqn-dis}
	\md \hat{Y}_{t}=b(\hat{Y}_{t}, \hat{\cE}_{t}[\hat{Y}])\md t+\sigma(\hat{Y}_{t}, \hat{\cE}_{t}[\hat{Y}])\md W_{t},
	\quad t>0.
\end{equation}
Notice the fact that \[\hat{\cE}_{t}[\hat{Y}]\equiv \hat{\cE}_{k\tau}[\hat{Y}], \text{  for any }t\in(k\tau,(k+1)\tau].\]
Therefore, on each time interval \((k\tau,(k+1)\tau]\), the equation~\eqref{eqn-dis} does not depend on the instantaneous law.
The so-called ``method of steps'' (cf.~\cite{driver2012ordinary}) can be applied to calculate the state and further update the discrete empirical measure.
Again, we can show that 
\begin{equation*}
	\label{est-dis}
	\E^{x}\big[ \cW_{2}(\hat{\cE}_{t}[\hat{Y}],\im)^{2} \big]=O(t^{-\eps}).
\end{equation*}
Indeed, \(\hat{\cE}_{t}\) defined by~\eqref{dicrete} can be represented as \(\cE^{\hat{\wl}}_{t}\) with \(\hat{{\wl}}_{t}\) defined as
			\[\hat{\wl}_{t}(\cdot)=\frac{1}{n_t}\sum_{k=0}^{n_t-1}\delta_{\frac{k\tau - \theta}{t} \wedge 0}.\]
We point out that \(\hat{{\wl}} = (\hat{{\wl}}_t)\) belongs to 
$\Pi_1(1-\beta/\alpha)$ and $\Pi_2(\eps)$ with any $\eps<1$.
So the desired convergence follows from Theorem~\ref{thm-wemv}. 
\end{exam}

Next, we consider a multi-particle empirical approximation,
in which one can expect a better convergence rate for average empirical measures.

\begin{thm}
	\label{thm-multi}
	Let \({\wl}\in\Pi_1(\eps_{1}) \cap \Pi_2(\eps_{2})\) and \({\wp}\in\Pi_2(\eps_{2})\) with \(\eps_{1},\,\eps_{2}\in(0,1]\), and \(Y^i,\ i=1,\dots,N\) satisfy
	\begin{equation}\label{eq-mpmvsde}
		\md Y_{t}^{i}=b\bigg(Y_{t}^{i},\frac{1}{N}\sum_{j=1}^{N}\cE_{t}^{{\wl}}[Y^{j}]\bigg)\md t +\sigma\bigg(Y_{t}^{i},\frac{1}{N}\sum_{j=1}^{N}\cE_{t}^{{\wl}}[Y^{j}]\bigg)\md W_{t}^{i},
	\end{equation}
	where \(W^{i}\) are independent Wiener processes.
	Then for each \(\bm{y}=(y^1,\dots,y^N)\in\bR^{d\times N}\), there is a constant \(C\) such that
	\begin{equation*}
		\E^{\bm{y}}\biggl[ \cW_{2}\biggl(\frac{1}{N}\sum_{i=1}^{N}\cE_{t}^{{\wp}}[Y^{i}],\im \biggr)^{\!2} \,\biggr] \leq  
		Ct^{-(\eps_{1} \wedge \kappa\eps_{2})} N^{-\kappa} + Ct^{-(\eps_{1}\wedge\eps_{2})}.
	\end{equation*}
\end{thm}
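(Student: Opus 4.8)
The plan is to reduce Theorem~\ref{thm-multi} to the single-particle estimate of Theorem~\ref{thm-wemv} combined with a propagation-of-chaos argument that exploits the $1/N$ averaging. First I would set $\bar\mu^N_t := \frac1N\sum_{j=1}^N \cE^{\wl}_t[Y^j]$ for the empirical-measure argument appearing in \eqref{eq-mpmvsde}, and compare each $Y^i$ to an independent copy $\tilde Y^i$ of the self-interacting process solving \eqref{eqn-wemv} driven by the same $W^i$. The monotonicity condition \eqref{eqn-mono} gives, via Itô's formula applied to $|Y^i_t - \tilde Y^i_t|^2$, a Gr\"onwall-type estimate controlling $\E|Y^i_t-\tilde Y^i_t|^2$ by a time-average of $\cW_2(\bar\mu^N_s, \cE^{\wl}_s[\tilde Y^i])^2$. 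Since the $\tilde Y^i$ are i.i.d., the average $\frac1N\sum_j \cE^{\wl}_s[\tilde Y^j]$ concentrates around $\cL^{\wl}_s[\tilde Y^1]$ — and here the weight condition $\wl \in \Pi_2(\eps_2)$ is exactly what is needed to bound the fluctuation, because the second condition in \eqref{eqn-Pi} controls the ``diagonal'' contributions $|s_1-s_2|^{-\eps}$ that arise when estimating $\E\,\cW_2\big(\frac1N\sum_j\cE^{\wl}_s[\tilde Y^j], \cL^{\wl}_s[\tilde Y^1]\big)^2$ by a variance computation; one gets a bound of order $N^{-\kappa} s^{-\eps_2}$ roughly as in the proof of Proposition~\ref{lem-aux} (this is where the $N^{-\kappa}$ factor enters).

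Next I would assemble the pieces. Writing $\cW_2\big(\frac1N\sum_i\cE^{\wp}_t[Y^i],\im\big)^2 \lesssim \cW_2\big(\frac1N\sum_i\cE^{\wp}_t[Y^i], \frac1N\sum_i\cE^{\wp}_t[\tilde Y^i]\big)^2 + \cW_2\big(\frac1N\sum_i\cE^{\wp}_t[\tilde Y^i],\im\big)^2$, the first term is dominated by $\frac1N\sum_i \cE^{\wp}_t$-weighted averages of $|Y^i_s-\tilde Y^i_s|^2$, hence by the coupling estimate above; the second term splits further into $\cW_2\big(\frac1N\sum_i\cE^{\wp}_t[\tilde Y^i], \frac1N\sum_i\cE^{\wp}_t[\hat X^i]\big)^2$ plus $\cW_2\big(\frac1N\sum_i\cE^{\wp}_t[\hat X^i],\im\big)^2$, where $\hat X^i$ are i.i.d.\ solutions of the Markovian reference equation $\md\hat X=b(\hat X,\im)\md t+\sigma(\hat X,\im)\md W$. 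For the term involving $\hat X^i$ one again uses a variance/concentration argument: $\E\,\cW_2\big(\frac1N\sum_i\cE^{\wp}_t[\hat X^i],\im\big)^2$ is bounded by $\frac1N$ times the single-copy bound $O(t^{-\kappa\eps_2})$ from Proposition~\ref{lem-aux} \emph{plus} a bias term $O(t^{-\eps_2})$ coming from the $t^{\eps}\wedge|s_1-s_2|^{-\eps}$ cross-term in the definition of $\Pi_2$ — which is precisely the source of the non-vanishing-in-$N$ summand $Ct^{-(\eps_1\wedge\eps_2)}$ in the statement. The comparison between $\tilde Y^i$ and $\hat X^i$ is handled exactly as in the proof of Theorem~\ref{thm-wemv}, using $\wl\in\Pi_1(\eps_1)$ and the long-time convergence $\cW_2(\cL_t[X],\im)\to 0$.

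The main obstacle I anticipate is bookkeeping the interaction between the averaging in $N$ and the temporal averaging in $\cE^{\wl}$, i.e.\ getting the two-scale estimate clean. Concretely: in the Gr\"onwall step the drift feedback couples $\E|Y^i_t-\tilde Y^i_t|^2$ across all $i$ through $\bar\mu^N$, so one must set up the estimate for $\frac1N\sum_i\E|Y^i_t-\tilde Y^i_t|^2$ simultaneously and absorb the $\beta\cW_2(\bar\mu^N_s,\cdot)^2$ term using $\alpha>\beta$; the subtlety is that $\cW_2(\bar\mu^N_s, \frac1N\sum_j\cE^{\wl}_s[\tilde Y^j])^2 \le \frac1N\sum_j\int_0^1|Y^j_{ss'}-\tilde Y^j_{ss'}|^2\wl_s(\md s')$ must be fed back with the right weight, which forces the $\Pi_1(\eps_1)$ condition (the $\limsup$ of $\int s^{-\eps_1}\wl_t(\md s)<\alpha/\beta$) to close the loop while keeping the rate $t^{-\eps_1}$. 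Once that self-consistent inequality is in place, the remaining concentration estimates are routine refinements of the i.i.d.\ and Markovian arguments already established, with the $N^{-\kappa}$ arising from the Fournier--Guillin-type rate for empirical measures in $\cW_2$ on $\bR^d$ (hence the dimension-dependent $\kappa$) and the additive $t^{-(\eps_1\wedge\eps_2)}$ term being the irreducible bias that multi-particle averaging cannot remove.
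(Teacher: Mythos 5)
Your plan has a structural gap that prevents it from proving the stated bound: the detour through i.i.d.\ copies $\tilde Y^i$ of the self-interacting equation \eqref{eqn-wemv} destroys the $N^{-\kappa}$ improvement, which is the whole content of Theorem~\ref{thm-multi}. Wherever you compare to $\tilde Y^i$, a single-particle fluctuation term appears that does not shrink with $N$: in your Gr\"onwall step the feedback contains $\cW_{2}(\cE^{\wl}_{t}[\tilde Y^{i}],\cL^{\wl}_{t}[\tilde Y^{i}])^{2}$ (the empirical measure of one copy against its law), and in your final assembly the term $\E\,\cW_{2}\bigl(\frac{1}{N}\sum_{i}\cE^{\wp}_{t}[\tilde Y^{i}],\frac{1}{N}\sum_{i}\cE^{\wp}_{t}[\hat X^{i}]\bigr)^{2}$ is controlled only by the single-particle coupling error $\E|\tilde Y^{i}_{s}-\hat X^{i}_{s}|^{2}=O(s^{-(\eps_{1}\wedge\kappa\eps_{2})})$ from the proof of Theorem~\ref{thm-wemv}. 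Neither of these carries any factor of $N$, so your route can only deliver $C t^{-(\eps_{1}\wedge\kappa\eps_{2})}$, i.e.\ ``the average does at least as well as one particle,'' which is strictly weaker than $Ct^{-(\eps_{1}\wedge\kappa\eps_{2})}N^{-\kappa}+Ct^{-(\eps_{1}\wedge\eps_{2})}$. The paper avoids the intermediate layer entirely: it couples each $Y^{i}$ \emph{directly} to an independent Markovian reference process $\hat X^{i}$ with frozen measure $\im$, proves the multi-particle analogue of Proposition~\ref{lem-aux} (Lemma~\ref{lem-multi-aux}), namely $\E\,\cW_{2}\bigl(\frac{1}{N}\sum_{i}\cE^{\wp}_{t}[\hat X^{i}],\im\bigr)^{2}\le C t^{-\kappa\eps}N^{-\kappa}+Ct^{-\eps}$, and feeds this into the same comparison argument (Lemma~\ref{lem-comp}) used for Theorem~\ref{thm-wemv}; this is how the $N^{-\kappa}$ survives the Gr\"onwall step and multiplies the $t^{-(\eps_1\wedge\kappa\eps_2)}$ term.

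Two further claims in your sketch are not correct as stated. First, $\E\,\cW_{2}\bigl(\frac{1}{N}\sum_{i}\cE^{\wp}_{t}[\hat X^{i}],\im\bigr)^{2}$ is not ``$\frac{1}{N}$ times the single-copy bound plus a bias'': squared Wasserstein distance is not a variance and does not scale like $N^{-1}$ under averaging of i.i.d.\ empirical measures; the correct gain is the dimension-dependent $N^{-\kappa}$, obtained in Lemma~\ref{lem-multi-aux} by inserting the $N^{-1/2}$ independence gain inside the $J_2$ (density-coupling) estimate and then re-optimizing the mollification and truncation parameters $r,R$ jointly in $t$ \emph{and} $N$. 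Second, your concentration estimate for $\frac{1}{N}\sum_{j}\cE^{\wl}_{s}[\tilde Y^{j}]$ around $\cL^{\wl}_{s}[\tilde Y^{1}]$ ``as in Proposition~\ref{lem-aux}'' is not available: the $\tilde Y^{j}$ are path-dependent, not Markov, and the $J_2$ estimate in that proof relies essentially on the Markov property together with the exponential contraction of the semigroup from Lemma~\ref{lem-std}; to use it one must first pass to $\hat X^{j}$, which reintroduces the $N$-independent single-particle error. Finally, the irreducible summand $Ct^{-(\eps_{1}\wedge\eps_{2})}$ does not come from the $|s_{1}-s_{2}|^{-\eps}$ cross-term of $\Pi_{2}$; it comes from $\cW_{2}(\cL^{\wp}_{t}[\hat X],\im)^{2}\le Ct^{-\eps_{2}}$ (the single-integral condition in $\Pi_{2}$ combined with Lemma~\ref{lem-std}) together with the $\eps_{1}$ cap imposed by $\wl\in\Pi_{1}(\eps_{1})$ in the comparison lemma.
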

\begin{rmk}
	The estimate can be split into two parts.
	For fixed \(N\), the first term is the leading term when \(t\) goes to infinity. 
	For multi-particle approximation, our result gives the same upper bound of the convergence rate of \(t\) but a constant decreasing with the increase of \(N\).   
	The second term characterizes the convergence rate of 
	\[\E^{\bm{y}}\big[ \cW_{2}(\cL^{{\wp}}_{t}[Y^{i}],\im)^{2} \big],\]
	which is in some sense the limit of 
	\[\E^{\bm{y}}\biggl[ \cW_{2}\biggl(\frac{1}{N}\sum_{i=1}^{N}\cE_{t}^{{\wp}}[Y^{i}],\im \biggr)^{\!2} \,\biggr] \]
for fixed $t$,	 as \(N\) goes to infinity.
\end{rmk}

\section{Convergence of empirical measures for Markovian SDEs}
Recall that Assumption~\ref{assump} is fulfilled
and $\im$ denotes the invariant 
measure for~\eqref{eqn-mvsde}.
Recall the Markovian SDE
\begin{equation}\label{eqn-aux}
	\md \hat{X}_{t}=b(\hat{X}_{t},\im)\md t +\sigma(\hat{X}_{t},\im)\md W_{t}.
\end{equation}
Let \(P_{t}\) be the associated semigroup for \(\hat{X}\),
and \(\hat{X}^{x}\) and \(\hat{X}^{\nu}\) be the solutions of the above equations with initial laws \(\delta_{x}\) and \(\nu\), respectively.
Evidently, $\im$ is the unique invariant measure for $\hat{X}$.

\begin{prop}
	\label{lem-aux}
	Let \({\wp}\in\Pi_2(\eps)\) with \(0<\eps<1\).
	Then for each \(\nu\in\cP_{2+\rho}\), there exists a constant \(C\) such that
	\[\E^{\nu} \big[\cW_{2}(\cE^{{\wp}}_{t}[\hat{X}],\im)^{2}\big]\leq C t^{-\kappa\eps}.\]
\end{prop}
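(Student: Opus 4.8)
\emph{Sketch of the intended argument.}

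The plan is to pass from $\hat X$ to its stationary version and then run a truncation-and-partition estimate. Write $\hat X^{\nu}$, $\hat X^{\im}$ for the solutions of \eqref{eqn-aux} started from $\nu$ and from $\im$; the latter is stationary, $\cL_t[\hat X^{\im}]\equiv\im$. Since the coefficients of \eqref{eqn-aux} are $b(\cdot,\im),\sigma(\cdot,\im)$, inequality \eqref{eqn-mono} with $\mu=\nu=\im$ gives $2\la b(x,\im)-b(y,\im),x-y\ra+\|\sigma(x,\im)-\sigma(y,\im)\|^{2}\le-\alpha|x-y|^{2}$, so an Itô computation on $|\hat X^{x}_t-\hat X^{y}_t|^{2}$ yields the contraction $\cW_2(\cL_t[\hat X^{\mu}],\cL_t[\hat X^{\nu}])^{2}\le e^{-\alpha t}\cW_2(\mu,\nu)^{2}$ and, in particular, a synchronous coupling with $\E|\hat X^{\nu}_t-\hat X^{\im}_t|^{2}\le e^{-\alpha t}\cW_2(\nu,\im)^{2}$. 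Pushing $\wp_t$ forward by $s\mapsto(\hat X^{\nu}_{ts},\hat X^{\im}_{ts})$ gives a coupling of the two empirical measures, whence $\E^{\nu}[\cW_2(\cE^{\wp}_t[\hat X^{\nu}],\cE^{\wp}_t[\hat X^{\im}])^{2}]\le\cW_2(\nu,\im)^{2}\int_0^1 e^{-\alpha ts}\,\wp_t(\md s)$; splitting this integral at $s=1/t$ and using that $\wp\in\Pi_2(\eps)$ forces $\wp_t([0,1/t])\le Ct^{-\eps}$ and $\int_{1/t}^{1}s^{-\eps}\wp_t(\md s)=O(1)$, one bounds it by $Ct^{-\eps}\le Ct^{-\kappa\eps}$ because $\kappa<1$. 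The second inequality of Assumption~\ref{assump} furnishes a Lyapunov function of the type $|x|^{2+\rho}$, giving $\sup_{t\ge0}\E^{\nu}|\hat X_t|^{2+\rho}<\infty$ for $\nu\in\cP_{2+\rho}$ and, in the limit, $\im\in\cP_{2+\rho}$. It remains to bound $\E^{\im}[\cW_2(\cE^{\wp}_t[\hat X],\im)^{2}]$.

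For the stationary process, fix $R>0$ and a grid of cubes of mesh $\delta>0$ covering $B_R$. The density coupling lemma of \cite{horowitz1994mean}, applied to $\cE^{\wp}_t[\hat X^{\im}]$ and $\im$, produces a transport plan giving
\[
\cW_2\big(\cE^{\wp}_t[\hat X^{\im}],\im\big)^{2}\le C_d\Big(\delta^{2}+\mathcal T_R\big(\cE^{\wp}_t[\hat X^{\im}]\big)+\mathcal T_R(\im)+R^{2}\textstyle\sum_{Q}\big|\cE^{\wp}_t[\hat X^{\im}](Q)-\im(Q)\big|\Big),
\]
where $\mathcal T_R(\lambda):=\int_{|x|>R}|x|^{2}\,\lambda(\md x)$ and the sum runs over the $\asymp(R/\delta)^{d}$ cubes. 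Taking expectations, stationarity gives $\E^{\im}\mathcal T_R(\cE^{\wp}_t[\hat X^{\im}])=\mathcal T_R(\im)\le R^{-\rho}\im(|\cdot|^{2+\rho})$, so the two tail terms are $O(R^{-\rho})$.

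The core estimate is $\E^{\im}|\cE^{\wp}_t[\hat X^{\im}](Q)-\im(Q)|$. By stationarity $\E^{\im}[\cE^{\wp}_t[\hat X^{\im}](Q)]=\im(Q)$, so this is at most $(\mathrm{Var})^{1/2}$ with $\mathrm{Var}=\int_0^1\!\int_0^1\mathrm{Cov}(\mathbf 1_{\hat X^{\im}_{ts_1}\in Q},\mathbf 1_{\hat X^{\im}_{ts_2}\in Q})\,\wp_t(\md s_1)\wp_t(\md s_2)$. For $u\le v$ I would bound the covariance in two ways: trivially by $\im(Q)$; and, rewriting it as $\E^{\im}[\mathbf 1_{\hat X_u\in Q}(P_{v-u}\mathbf 1_Q(\hat X_u)-\im(Q))]$, by estimating $|P_{s}\mathbf 1_Q(x)-\im(Q)|$ through the coupling $\E|\hat X^{x}_s-\hat X^{\im}_s|^{2}\le e^{-\alpha s}\cW_2(\delta_x,\im)^{2}$ with $\cW_2(\delta_x,\im)\lesssim|x|+1$ — this leaves the $\im$-mass of a thin shell around $\partial Q$, which is absorbed after averaging the grid over a random translation (a standard Fubini device). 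Combining the two bounds and using both defining inequalities of $\Pi_2(\eps)$ (the one with $t^{\eps}\wedge s^{-\eps}$ to handle the diagonal $s_1=s_2$, the one with $t^{\eps}\wedge|s_1-s_2|^{-\eps}$ for the off-diagonal part) yields $\mathrm{Var}\lesssim\im(Q)\,t^{-\eps}$, hence $\E^{\im}|\cE^{\wp}_t[\hat X^{\im}](Q)-\im(Q)|\lesssim\im(Q)^{1/2}t^{-\eps/2}$; Cauchy--Schwarz over the $\asymp(R/\delta)^{d}$ cubes then bounds $\sum_Q\E^{\im}|\cdot|$ by $(R/\delta)^{d/2}t^{-\eps/2}$.

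Collecting the pieces, $\E^{\im}[\cW_2(\cE^{\wp}_t[\hat X],\im)^{2}]\lesssim R^{-\rho}+\delta^{2}+R^{2}(R/\delta)^{d/2}t^{-\eps/2}$. Choosing $\delta\asymp R\,t^{-\eps/(d+2)}$ balances the last two terms at $\asymp R^{2}t^{-\eps/(d+2)}$, and then $R\asymp t^{\eps/((d+2)(\rho+2))}$ balances this against $R^{-\rho}$, giving the claimed $t^{-\rho\eps/((d+2)(\rho+2))}=t^{-\kappa\eps}$. I expect the main obstacle to be the covariance step: making the ergodicity-plus-coupling estimate for cube indicators interact cleanly with the $\Pi_2(\eps)$ averaging (in particular the random-shift bookkeeping of the boundary shells) so that the effective sample count is exactly $t^{\eps}$ — this, together with the moment-limited truncation, is what pins down the exponent $\kappa=\rho/((d+2)(\rho+2))$.
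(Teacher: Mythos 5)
Your outer architecture is genuinely different from the paper's and its correct parts are fine: the reduction from a general start $\nu\in\cP_{2+\rho}$ to the stationary start via synchronous coupling, the bound $\int_0^1 e^{-\alpha ts}\,\wp_t(\md s)\lesssim t^{-\eps}$ from the first defining condition of $\Pi_2(\eps)$, the truncation/partition skeleton, and the final optimization in $(\delta,R)$ reproducing $\kappa=\rho/((d+2)(\rho+2))$ all check out. (One attribution slip: the Horowitz--Karandikar lemma used in the paper is the \emph{density} coupling bound $\cW_2(\mu,\nu)^2\lesssim\int|x|^2|f-g|\,\md x$, which cannot be applied to the empirical measure itself since $\cE^{\wp}_t[\hat X]$ has no density; your cube-partition inequality is a different, standalone estimate. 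The paper instead mollifies both measures by a smooth kernel $\phi_r$ precisely so that this lemma applies.)

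The genuine gap is the asserted per-cube bound $\mathrm{Var}\lesssim\im(Q)\,t^{-\eps}$, which is where all the work lies and which you leave as an ``obstacle''. Assumption~\ref{assump} allows degenerate $\sigma$: the semigroup $P_s$ has no smoothing and $\im$ may be singular, so the only regularity available is the $\cW_2$-contraction of Lemma~\ref{lem-std}. Applied to the non-Lipschitz test function $\bm{1}_Q$ it yields only $|P_s\bm{1}_Q(x)-\im(Q)|\lesssim\theta^{-2}e^{-\alpha s}(1+|x|)^2+\im\big(\{\mathrm{dist}(\cdot,\partial Q)\le\theta\}\big)$; without a density for $\im$ the shell mass cannot be made small cube by cube, only after averaging the grid over random shifts and summing over the grid (giving a global $O(\theta/\delta)$), and the weight $(1+|x|)^2$ together with the $|s_1-s_2|$-dependent choice of $\theta$ must then be threaded through the $\wp_t\otimes\wp_t$ integral before the second $\Pi_2(\eps)$ condition can be invoked. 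Carrying this bookkeeping out gives (at best) $\sum_Q\mathrm{Var}_Q\lesssim\delta^{-c}t^{-\eps}$ or a logarithmic loss, not the clean factor $\im(Q)$ you use before Cauchy--Schwarz, so the stated rate $t^{-\kappa\eps}$ is not yet established by your sketch --- one would only recover $t^{-\kappa\eps'}$ for $\eps'<\eps$ without further ideas. This is exactly the difficulty the paper's route avoids: by convolving with $\phi_r$, the smoothness sits in the test functions, the contraction gives $\|\nabla_z P_s\phi_r(y-z)\|_\infty\lesssim\|\nabla\phi_r\|_\infty e^{-\alpha s/2}$ with no boundary-shell terms, and the Markov-property plus the variance (Poincar\'e-type) Lemma~\ref{lem-grad} close the $J_2$ estimate with explicit powers of $r$ and $R$, after which the density coupling lemma converts the $L^1$-density bound into a Wasserstein bound. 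To complete your argument you would need either a rigorous covariance estimate for cube indicators under possibly degenerate, non-smoothing dynamics (including the random-shift argument done jointly with the time integrals), or to mollify as the paper does --- at which point you are essentially back to the paper's proof.
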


The proof of this result relies on several auxiliary lemmas. 
The following estimates are standard, which can be found in~\cite{wang2018distribution}.

\begin{lem}
	\label{lem-std}
	Under the above setting, \(\E[|\hat{X}^{\nu}_{t}|^{2+\rho}]\) is bounded  uniformly in $t$ for each \(\nu\in\cP_{2+\rho} 
	\); 
	for any $x_1,x_2\in\bR^d$,
	\[\E|\hat{X}^{x_{1}}_{t}-\hat{X}^{x_{2}}_{t}|^{2}\leq C |x_1-x_2|^{2} \me^{ - \alpha t},\]
	where \(C\) is a constant independent of initial states; and for any $\nu\in\cP_2$,
 \[\cW_{2}(\cL_{t}[\hat{X}^{\nu}],\im)^{2}\leq \cW_{2}(\nu,\im)^{2}\me^{ - \alpha t}.\]
\end{lem}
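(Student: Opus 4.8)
The plan is to read off all three estimates from It\^o's formula together with the two structural inequalities of Assumption~\ref{assump}, specialized to the frozen measure argument $\im$ — so that $\cW_2(\im,\im)=0$ and $\im(|\cdot|^2)$ enters only through the finite constant $C_0:=K[1+\rho+\im(|\cdot|^2)]$ — and then to close things with Gr\"onwall's lemma. I do not expect any essential difficulty: these are the standard estimates for SDEs with monotone coefficients, and the only point that needs care is a routine localization, required because Assumption~\ref{assump} assumes neither a global Lipschitz nor a linear-growth bound on $b,\sigma$. Throughout I would use the well-posedness and path continuity of~\eqref{eqn-aux} granted by Theorem~\ref{thm-well-posed}.

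First I would prove the moment bound. Fix $p\in\{2,\,2+\rho\}$ and take $V(x)=(1+|x|^2)^{p/2}$, which is $C^2$ and comparable with $|x|^p$ for large $|x|$ (the shift by $1$ only serves to avoid any fuss at the origin). A direct computation gives $\tfrac12\mathrm{tr}\big(\sigma(x,\im)\sigma(x,\im)^{\top}D^2V(x)\big)\le\tfrac{p(p-1)}{2}(1+|x|^2)^{p/2-1}\|\sigma(x,\im)\|^2$, so the generator $\mathscr L$ of~\eqref{eqn-aux} satisfies
\[
\mathscr L V(x)\le\frac p2(1+|x|^2)^{p/2-1}\Big[\,2\la b(x,\im),x\ra+(p-1)\|\sigma(x,\im)\|^2\,\Big].
\]
Since $p-1\le1+\rho$ and $\|\sigma\|^2\ge0$, the second inequality in Assumption~\ref{assump} bounds the bracket by $-\gamma|x|^2+C_0$; writing $|x|^2=(1+|x|^2)-1$ and absorbing the lower-order term via Young's inequality then yields $\mathscr L V\le-c_1V+c_2$ for constants $c_1>0$, $c_2\ge0$ depending only on $p,\gamma,C_0$. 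Applying It\^o to $V(\hat X^\nu_t)$, stopping at $\tau_n=\inf\{t:|\hat X^\nu_t|\ge n\}$ to remove the local martingale, taking expectations, letting $n\to\infty$ by Fatou, and invoking Gr\"onwall would give $\E[V(\hat X^\nu_t)]\le\nu(V)+c_2/c_1$. With $p=2+\rho$ and $\nu\in\cP_{2+\rho}$ this is the first assertion; with $p=2$ it gives $\sup_{t\ge0}\E|\hat X^\nu_t|^2<\infty$ for every $\nu\in\cP_2$, which I would use to legitimize the localizations below.

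Next I would establish a pathwise contraction: for any two solutions $\hat X^{(1)},\hat X^{(2)}$ of~\eqref{eqn-aux} driven by the same $W$ (with possibly random initial data), set $Z_t=\hat X^{(1)}_t-\hat X^{(2)}_t$ and apply It\^o to $\me^{\alpha t}|Z_t|^2$. Its drift equals
\[
\me^{\alpha t}\Big[\,\alpha|Z_t|^2+2\la b(\hat X^{(1)}_t,\im)-b(\hat X^{(2)}_t,\im),\,Z_t\ra+\|\sigma(\hat X^{(1)}_t,\im)-\sigma(\hat X^{(2)}_t,\im)\|^2\,\Big],
\]
and the first inequality in Assumption~\ref{assump} (with $\mu=\nu=\im$) bounds the bracketed sum beyond $\alpha|Z_t|^2$ by $-\alpha|Z_t|^2$, so the drift is nonpositive and $\me^{\alpha t}|Z_t|^2$ is a local supermartingale. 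Localizing and using Fatou would give $\E|Z_t|^2\le\E|Z_0|^2\,\me^{-\alpha t}$, the finiteness of $\E|Z_t|^2$ being supplied by the second-moment bound above. Taking $\hat X^{(1)}=\hat X^{x_1}$ and $\hat X^{(2)}=\hat X^{x_2}$ gives the second assertion (in fact with $C=1$).

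Finally, for the Wasserstein bound I would use a synchronous coupling: let $(\xi,\eta)$ be an optimal coupling of $(\nu,\im)$, so $\E|\xi-\eta|^2=\cW_2(\nu,\im)^2$, solve~\eqref{eqn-aux} from $\xi$ and from $\eta$ driven by the same $W$, and denote the solutions $\hat X^\xi,\hat X^\eta$. Because $\im$ is invariant for $\hat X$ we have $\cL_t[\hat X^\eta]=\im$, while $\cL_t[\hat X^\xi]=\cL_t[\hat X^\nu]$; hence $(\hat X^\xi_t,\hat X^\eta_t)$ is a coupling of $\cL_t[\hat X^\nu]$ and $\im$, giving $\cW_2(\cL_t[\hat X^\nu],\im)^2\le\E|\hat X^\xi_t-\hat X^\eta_t|^2$, and the pathwise contraction (valid for random initial data) then yields $\E|\hat X^\xi_t-\hat X^\eta_t|^2\le\E|\xi-\eta|^2\,\me^{-\alpha t}=\cW_2(\nu,\im)^2\,\me^{-\alpha t}$, which is the last assertion. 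The only delicate point in this whole argument is the localization bookkeeping — checking that the stopped processes carry enough integrability to pass to the limit — which, given Theorem~\ref{thm-well-posed}, is entirely routine and consistent with these estimates being labelled standard.
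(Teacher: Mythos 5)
Your proposal is correct and is exactly the standard argument the paper has in mind: the paper omits the proof entirely, stating only that these estimates ``can be found in \cite{wang2018distribution}'', and the Lyapunov/dissipativity bound for the moments, the It\^o--Gr\"onwall contraction from the monotonicity condition with $\mu=\nu=\im$, and the synchronous coupling for the Wasserstein decay are precisely the ingredients used there. No gaps; the localization points you flag are indeed routine given that $b,\sigma$ are bounded on bounded sets.
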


A density coupling lemma due to~\cite{horowitz1994mean}
plays a crucial role in our proof, which gives an upper bound of Wasserstein distance
in terms of the density function of the measures.

\begin{lem}[density coupling lemma]
	\label{lem-decouple}
	Let \(f\) and \(g\) be probability density functions on \(\bR^{d}\) such that
	\[\int_{\bR^{d}} |x|^{2}[f(x)+g(x)]\md x <\infty,\]
	and define \(\mu(\md x)=f(x)\md x\) and \(\nu(\md x)=g(x)\md x\).
	Then one has
	\[\cW_{2}(\mu,\nu)^{2}\leq C\int_{\bR^{d}}|x|^{2}|f(x)-g(x)|\md x,\]
	for some constant \(C\).
\end{lem}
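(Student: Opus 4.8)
The plan is to prove the inequality by exhibiting an explicit admissible coupling of $\mu$ and $\nu$ — the \emph{maximal coupling} built from their common mass $f\wedge g$ — and bounding its quadratic transport cost directly; this gives the assertion with the explicit constant $C=2$.

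First I would set $h:=f\wedge g$ (pointwise minimum) and
\[
\delta:=1-\int_{\bR^{d}}h(x)\,\md x=\tfrac12\int_{\bR^{d}}|f(x)-g(x)|\,\md x .
\]
If $\delta=0$ then $f=g$ almost everywhere, hence $\mu=\nu$ and the inequality is trivial; so assume $\delta\in(0,1]$. Decompose $f=h+(f-g)_{+}$ and $g=h+(g-f)_{+}$; then $(f-g)_{+}$ and $(g-f)_{+}$ are nonnegative, each integrates to $\delta$ (because $\int(f-g)_{+}-\int(g-f)_{+}=\int(f-g)=0$), and each is dominated by $f+g$, so by hypothesis both have finite second moments.

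Next, on an auxiliary probability space I would define a pair $(X,Y)$ as a mixture: with probability $1-\delta$ let $X=Y$ be drawn from the probability density $h/(1-\delta)$; with probability $\delta$ draw $X$ from the density $(f-g)_{+}/\delta$ and, independently, $Y$ from $(g-f)_{+}/\delta$. Reading off the marginals of this mixture gives $\mathrm{law}(X)=h+(f-g)_{+}=f$, i.e.\ $\mathrm{law}(X)=\mu$, and likewise $\mathrm{law}(Y)=\nu$, so $(X,Y)$ is an admissible coupling.

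Finally I would estimate the cost. On the event $\{X=Y\}$ the integrand $|X-Y|^{2}$ vanishes, so only the second branch contributes:
\[
\E|X-Y|^{2}=\delta\,\E\big[|X'-Y'|^{2}\big]\le 2\delta\big(\E|X'|^{2}+\E|Y'|^{2}\big),
\]
where $X'\sim (f-g)_{+}/\delta$ and $Y'\sim (g-f)_{+}/\delta$ are independent and I used $|x-y|^{2}\le 2|x|^{2}+2|y|^{2}$. Since $\E|X'|^{2}=\tfrac1\delta\int_{\bR^d}|x|^{2}(f-g)_{+}\,\md x$ and similarly for $Y'$, the right-hand side equals $2\int_{\bR^{d}}|x|^{2}|f-g|\,\md x$. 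As $\cW_{2}(\mu,\nu)^{2}\le\E|X-Y|^{2}$ for any coupling, this yields the claim with $C=2$. There is essentially no obstacle: the only points requiring a little care are the degenerate case $\delta=0$ and verifying that the positive-part densities inherit finite second moments from the hypothesis, both of which are immediate.
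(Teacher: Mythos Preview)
Your proof is correct and yields the explicit constant $C=2$. The paper does not supply its own proof of this lemma; it simply attributes the result to Horowitz and Karandikar (1994), and your maximal-coupling construction is precisely the standard argument used there.
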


The next result is a perturbation lemma for measures.

\begin{lem}
	\label{lem-moll}
	Let \(\xi\) be a random variable with law \(\Phi\in\cP_2\).
Then
	\[\cW_{2}(\Phi*\mu,\mu)^2\leq \E|\xi|^{2}\]
	for any \(\mu\in\cP_2\).
\end{lem}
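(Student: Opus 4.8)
\textbf{Proof proposal for Lemma~\ref{lem-moll}.}
The plan is simply to write down an explicit coupling of $\Phi*\mu$ and $\mu$ and read off the bound from the definition of $\cW_2$. On a (possibly enlarged) probability space, let $\eta$ be a random variable with law $\mu$, chosen independent of $\xi$. Then, by the very definition of convolution, $\eta+\xi$ has law $\Phi*\mu$, while $\eta$ has law $\mu$. Hence the pair $(\eta+\xi,\eta)$ is admissible in the infimum defining $\cW_2(\Phi*\mu,\mu)$, and therefore
\[
\cW_2(\Phi*\mu,\mu)^2 \le \E\big|(\eta+\xi)-\eta\big|^2 = \E|\xi|^2,
\]
which is exactly the claimed inequality.

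The only points needing a word of justification are bookkeeping. First, one should note that $\Phi*\mu\in\cP_2$, so that $\cW_2(\Phi*\mu,\mu)$ is finite and the statement is meaningful; this follows from $\E|\eta+\xi|^2\le 2\E|\eta|^2+2\E|\xi|^2<\infty$, using $\mu\in\cP_2$ and $\Phi\in\cP_2$. Second, one should make sure such an independent pair $(\eta,\xi)$ exists, which is standard: take the product space carrying $\eta\sim\mu$ and $\xi\sim\Phi$. There is no genuine obstacle here — the result is an immediate consequence of the coupling (Kantorovich) characterization of the Wasserstein distance, and the interest of the lemma lies entirely in its later use, where $\xi$ plays the role of a small (e.g. mollification) perturbation and $\E|\xi|^2$ is taken small to conclude that $\Phi*\mu$ is close to $\mu$.
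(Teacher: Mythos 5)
Your proof is correct and is essentially identical to the paper's: both use the coupling $(\eta+\xi,\eta)$ with $\eta\sim\mu$ independent of $\xi$ and read off the bound $\cW_2(\Phi*\mu,\mu)^2\le\E|\xi|^2$ from the definition of the Wasserstein distance. The extra remarks on $\Phi*\mu\in\cP_2$ and the existence of the independent pair are fine but not needed beyond what the paper already states.
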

\begin{proof}
    Let $\eta$ be a random variable with law $\mu$ and independent of $\xi$.
    Then the law of $\xi + \eta$ is $\Phi*\mu$, so
    \(
    \cW_{2}(\Phi*\mu,\mu)^2 \le \E|(\xi + \eta) - \eta|^2 = \E|\xi|^2
    \).
\end{proof}

We also need a weak version of Poincar\'e's inequality.

\begin{lem}
	\label{lem-grad}
	Let \(\mu
	\in \cP_2\)
	and \(f\in\cC^{1}_{b}(\bR^{d})\) such that $\mu(f) = 0$.
	Then
	\[\mu(f^2) \leq \mathrm{Var}(\mu) \left\|\nabla f \right\|^{2}_{\infty},\]
	where
	\[\mathrm{Var}(\mu):=\int_{\bR^{d}}|x|^{2}\,\mu(\md x)-\biggl|\int_{\bR^{d}}x\,\mu(\md x)\biggr|^{2}.\]
\end{lem}
\begin{proof}
One computes that
	\begin{align*}
		\int_{\bR^{d}} f^{2}(x)\mu(\md x)& = \frac{1}{2}\iint_{\bR^{d}\times\bR^{d}} [f^{2}(x)+f^{2}(y)]\,\mu(\md x)\mu(\md y)   \\
		& =                       \frac{1}{2}\iint_{\bR^{d}\times\bR^{d}} [f(x)-f(y)]^{2}\,\mu(\md x)\mu(\md y)       \\
		& \le                       \left\|\nabla f \right\|^{2}_{\infty}\iint_{\bR^{d}\times\bR^{d}} \frac{|x-y|^{2}}{2}\,\mu(\md x)\mu(\md y)       \\
		& =                     \mathrm{Var}(\mu) \left\|\nabla f \right\|^{2}_{\infty}.
	\end{align*}
	The lemma is proved.
\end{proof}

Now we are in a position to prove~Proposition~\ref{lem-aux}.

\begin{proof}[Proof of Proposition~\ref{lem-aux}]
The first step is to mollify all the laws by convolution with some smooth density functions.
Let \(\xi \) be a bounded random vector in \(\bR^{d}\) with smooth density.
Without loss of generality, we assume \(|\xi|\leq 1\).
Let \(\Phi_{r}\) be the law of \(r \xi\) and \(\phi_{r}\) be its density function.
We introduce the mollified empirical measures
\[\cE^{{\wp}}_{t,r}[\hat{X}]:=\Phi_{r}*\cE^{{\wp}}_{t}[\hat{X}]
=\int_{0}^{1} \big(\Phi_{r} * \delta_{\hat{X}_{ts}} \big)\,{\wp}_{t}(\md s)\]
and the mollified laws
\[\cL^{{\wp}}_{t,r}[\hat{X}]:= \Phi_{r} * \cL^{{\wp}}_{t}[\hat{X}]=\int_{0}^{1}
\big(\Phi_{r} * \cL_{ts}[\hat{X}]\big)\,{\wp}_{t}(\md s).\]
One can see that the density functions of $\cE^{{\wp}}_{t,r}[\hat{X}]$ and $\cL^{{\wp}}_{t,r}[\hat{X}]$ are
\[\int_{0}^{1} \phi_{r}(\cdot-\hat{X}_{ts})\,{\wp}_{t}(\md s)
\quad\text{and}\quad
\int_{0}^{1} \E[\phi_{r}(\cdot-\hat{X}_{ts})]\,{\wp}_{t}(\md s),\]
respectively.

To estimate \(\E^{\nu}[\cW_{2}(\cE^{{\wp}}_{t}[\hat{X}],\im)]\),  we write
\begin{equation}\label{ineq-1}
\begin{aligned}
\cW_{2}(\cE^{{\wp}}_{t}[\hat{X}],\im)
& \le \cW_{2}(\cE^{{\wp}}_{t}[\hat{X}],\cE^{{\wp}}_{t,r}[\hat{X}])
+ \cW_{2}(\cE^{{\wp}}_{t,r}[\hat{X}],\cL^{{\wp}}_{t,r}[\hat{X}])\\
&\quad + \cW_{2}(\cL^{{\wp}}_{t,r}[\hat{X}],\cL^{{\wp}}_{t}[\hat{X}]) 
+ \cW_{2}(\cL^{{\wp}}_{t}[\hat{X}],\im).
\end{aligned}
\end{equation}
By Lemma \ref{lem-moll}, it holds almost surely (a.s.) that
\begin{equation}
	\label{ineq-2}
	\cW_{2}(\cE^{{\wp}}_{t}[\hat{X}],\cE^{{\wp}}_{t,r}[\hat{X}])^{2}
	+ \cW_{2}(\cL^{{\wp}}_{t,r}[\hat{X}],\cL^{{\wp}}_{t}[\hat{X}])^{2}
	\leq\E|r\xi|^{2}\leq C r^{2}.
\end{equation}
By Lemma \ref{lem-std}, we have
\[
\begin{aligned}
	\E^{\nu}\big[  \cW_{2}(\cL^{{\wp}}_{t}[\hat{X}],\im)^{2}\big]
	& \leq \int_{0}^{1}\E^{\nu}\big[  \cW_{2}(\cL_{ts}[\hat{X}],\im)^{2} \big]\, {\wp}_{t}(\md s)\nonumber\\
	& \leq \cW_{2}(\nu,\im)^{2}\int_{0}^{1}\me^{-\alpha ts}\, {\wp}_{t}(\md s).
\end{aligned}
\]
When $t$ is sufficiently large, one has $\me^{-\alpha ts} \le 1\wedge (ts)^{-\eps }$ for all $s \in (0,1)$.
Keeping in mind that $\wp \in \Pi_2(\eps)$, one has
\begin{equation}\label{ineq-3}
\begin{aligned}
	\E^{\nu}\big[  \cW_{2}(\cL^{{\wp}}_{t}[\hat{X}],\im)^{2}\big]
	& \leq C\cW_{2}(\nu,\im)^{2}\int_{0}^{1}1\wedge (ts)^{-\eps }\,{\wp}_{t}(\md s)\\
	& \leq C t^{-\eps }\int_{0}^{1}t^{\eps }\wedge s^{-\eps }\,{\wp}_{t}(\md s)\\
	& \leq C t^{-\eps }.
\end{aligned}
\end{equation}
Hereafter, $C$ is a generic constant and may differ from line to line.

Now, we focus on the estimate of \(\cW_{2}(\cE^{{\wp}}_{t,r}[\hat{X}],\cL^{{\wp}}_{t,r}[\hat{X}])\).
By Lemma \ref{lem-decouple}, we have
\begin{align*}
	&\E^{\nu}\big[\cW_{2}(\cE^{{\wp}}_{t,r}[\hat{X}],\cL^{{\wp}}_{t,r}[\hat{X}])^{2}\big]\\
	\leq\,& C\E^{\nu} \int_{\bR^{d}} |y|^{2}\bigg|\int_{0}^{1}\big[\phi_{r}(y-\hat{X}_{ts})-\E^{\nu}\phi_{r}(y-\hat{X}_{ts})\big]{\wp}_{t}(\md s)\bigg|\md y                     \\
	\leq  \,                               & C\int_{|y|>R} |y|^{2}\,\E^{\nu}\bigg|\int_{0}^{1}\big[\phi_{r}(y-\hat{X}_{ts})-\E^{\nu}\phi_{r}(y-\hat{X}_{ts})\big]{\wp}_{t}(\md s)\bigg|\md y       \\
	                                 & +C\int_{|y|\leq R} |y|^{2}\,\E^{\nu}\bigg|\int_{0}^{1}\big[\phi_{r}(y-\hat{X}_{ts})-\E^{\nu}\phi_{r}(y-\hat{X}_{ts})\big]{\wp}_{t}(\md s)\bigg|\md y\\
	                                 =:\,& J_1 + J_2
\end{align*}
where \(r\) and \(R\) are related to \(t\) and will be specified later.

For $J_1$, it follows from Fubini's theorem that
\begin{align*}
	J_1 &\leq C\int_{|y|>R} |y|^{2}\,\E^{\nu}\bigg[\int_{0}^{1} 
	\big\{\phi_{r}(y-\hat{X}_{ts})+\E^{\nu}\phi_{r}(y-\hat{X}_{ts})\big\}\,{\wp}_{t}(\md s)\bigg] \md y                                                                \\
	& = C\int_{0}^{1}\!\!\int_{|y|>R} \E^{\nu}\big[|y|^{2} \phi_{r}(y-\hat{X}_{ts})\big]\md y{\wp}_{t}(\md s)                                                                       \\
	& \leq  C\int_{0}^{1}\!\!\int_{|y|>R} \E^{\nu}\big[|y-\hat{X}_{ts}|^{2} \phi_{r}(y-\hat{X}_{ts})+|\hat{X}_{ts}|^{2} \phi_{r}(y-\hat{X}_{ts})\big]\md y{\wp}_{t}(\md s)      \\
	& \leq C \int_{0}^{1}\!\!\int_{\bR^{d}}|y|^{2}\phi_{r}(y)\md y {\wp}_{t}(\md s) + C\int_{0}^{1}\!\!\int_{|y|>R}\E^{\nu}\big[ |\hat{X}_{ts}|^{2}\phi_{r}(y-\hat{X}_{ts})\big] \md y  {\wp}_{t}(\md s)                       \\
	& \leq Cr^{2}+C\int_{0}^{1}\!\!\int_{|y|>R}\E^{\nu}\big[ |\hat{X}_{ts}|^{2}\phi_{r}(y-\hat{X}_{ts})\big] \md y  {\wp}_{t}(\md s).
\end{align*}
Since  \(|r\xi|\) is bounded by \(r\), the support of \(\phi_{r}\) is contained in the ball \(B_{r}(0)\).
Therefore, we only need to consider on the event \( \{|\hat{X}_{ts}|>R-r\} \) and we get
\begin{align*}
	&\int_{0}^{1}\!\!\int_{|y|>R}\E^{\nu}\big[ |\hat{X}_{ts}|^{2}\phi_{r}(y-\hat{X}_{ts})\big] \md y  {\wp}_{t}(\md s)\\
	\leq\,&\int_{0}^{1}\E^{\nu}\bigg[ |\hat{X}_{ts}|^{2}\bm{1}_{\{ |\hat{X}_{ts}|>R-r \}}\int_{|y|>R}\phi_{r}(y-\hat{X}_{ts})\md y \bigg] \,{\wp}_{t}(\md s)\\
	\leq\,&\int_{0}^{1}\E^{\nu}\big[ |\hat{X}_{ts}|^{2}\bm{1}_{\{ |\hat{X}_{ts}|>R-r \}}\big]\,{\wp}_{t}(\md s).
\end{align*}
By Lemma \ref{lem-std}, \(\hat{X}_{t}\) is \((2+\rho)\)-integrable.
Applying H\"older's inequality and Markov's inequality, we have
\begin{align*}
	\E^{\nu}\big[ |\hat{X}_{ts}|^{2}\bm{1}_{\{ |\hat{X}_{ts}|>R-r \}}\big]
	&\leq\big[\E^{\nu}|\hat{X}_{ts}|^{\rho+2}\big]^{\frac{2}{\rho+2}}\big[ \E^{\nu} \bm{1}_{\{ |\hat{X}_{ts}|>R-r \}} \big]^{\frac{\rho}{\rho+2}}\\
	&\leq \E^{\nu}|\hat{X}_{ts}|^{\rho+2}(R-r)^{-\rho}\\
	&\leq C(R-r)^{-\rho}.
\end{align*}
Combining the above estimates, we prove that
\begin{equation}
	\label{ineq-parta}
	J_1 \leq Cr^{2}+C(R-r)^{-\rho}.
\end{equation}

To estimate $J_2$, an important observation is to represent $J_2$ by double integration and take advantage of the Markov property.
For the notational simplicity, we denote
\[f(s,x,y):=\phi_{r}(y-x)-\E^{\nu}[\phi_{r}(y-\hat{X}_{s})].\]
Then, we represent $J_2$ as
\begin{equation}	\label{eqn-pb}
\begin{aligned}
	J_2 & = C\int_{|y|\leq R} |y|^{2}\,\E^{\nu}\bigg\{\int_0^1 \!\!\int_0^1 f(ts_{1},\hat{X}_{ts_{1}},y)f(ts_{2},\hat{X}_{ts_{2}},y) \,{\wp}_{t}(\md s_{1}){\wp}_{t}(\md s_{2})\bigg\}^{\frac{1}{2}} \md y\\
	& \leq C\int_{|y|\leq R} |y|^{2}\bigg\{\E^{\nu} \int_0^1 \!\!\int_0^1 f(ts_{1},\hat{X}_{ts_{1}},y)f(ts_{2},\hat{X}_{ts_{2}},y) \,{\wp}_{t}(\md s_{1}){\wp}_{t}(\md s_{2})\bigg\}^{\frac{1}{2}}  \md y\\
	& \leq C\int_{|y|\leq R} |y|^{2}\bigg\{\E^{\nu}\bigg[\int_{0}^{1}f(ts_{1},\hat{X}_{ts_{1}},y){\wp}_{t}(\md s_{1})\int_{s_{1}}^{1} f(ts_{2},\hat{X}_{ts_{2}},y){\wp}_{t}(\md s_{2})\bigg]\bigg\}^{\frac{1}{2}}  \md y.
\end{aligned}
\end{equation}
Using the Markov property of \(\hat{X}\), we have
\begin{align*}
	&\E^{\nu}\bigg[\int_{0}^{1}f(ts_{1},\hat{X}_{ts_{1}},y)\,{\wp}_{t}(\md s_{1})\int_{s_{1}}^{1} f(ts_{2},\hat{X}_{ts_{2}},y)\,{\wp}_{t}(\md s_{2})\bigg] \\
	=\,&\E^{\nu}\bigg\{\int_{0}^{1}f(ts_{1},\hat{X}_{ts_{1}},y)\,{\wp}_{t}(\md s_{1})\,\E^{\nu}\bigg[ \int_{s_{1}}^{1}f(ts_{2},\hat{X}_{ts_{2}},y)\,{\wp}_{t}(\md s_{2})\,\bigg\vert\,\hat{X}_{ts_{1}} \bigg]\bigg\} \\
	=\,&\E^{\nu}\bigg[\int_{0}^{1}f(ts_{1},\hat{X}_{ts_{1}},y)\,{\wp}_{t}(\md s_{1})\int_{s_{1}}^{1} P_{ts_{2}-ts_{1}}f(ts_{1},\hat{X}_{ts_{1}},y)\,{\wp}_{t}(\md s_{2}) \bigg],
\end{align*}
where \(P\) acts on the second argument of \(f\).
Plugging this into \eqref{eqn-pb}, we get
\begin{align*}
	J_2 & \leq C\int_{|y|\leq R} |y|^{2}\bigg\{\E^{\nu}\bigg[\int_{0}^{1}f(ts_{1},\hat{X}_{ts_{1}},y) \,{\wp}_{t}(\md s_{1})\\
	&\qquad\qquad\qquad\qquad\qquad \times \int_{s_{1}}^{1} P_{ts_{2}-ts_{1}}f(ts_{1},\hat{X}_{ts_{1}},y) \,{\wp}_{t}(\md s_{2}) \bigg]\bigg\}^{\frac{1}{2}}  \md y\\
	& \leq C \|f \|_{\infty}^{\frac{1}{2}}\int_{|y|\leq R} |y|^{2}  \bigg\{ \int_{0}^{1}{\wp}_{t}(\md s_{1})\int_{s_{1}}^{1}\E^{\nu}\big|P_{ts_{2}-ts_{1}}f(ts_{1},\hat{X}_{ts_{1}},y)\big|{\wp}_{t}(\md s_{2}) \bigg\}^{\frac{1}{2}}  \md y\\
	& \leq C r^{-\frac{d}{2}} R^{2}\int_{|y|\leq R}  \bigg\{ \int_{0}^{1}{\wp}_{t}(\md s_{1})\int_{s_{1}}^{1}\E^{\nu}\big|P_{ts_{2}-ts_{1}}f(ts_{1},\hat{X}_{ts_{1}},y)\big| {\wp}_{t}(\md s_{2}) \bigg\}^{\frac{1}{2}}  \md y.
\end{align*}
Noticing \(\E^{\nu}[ P_{ts_{2}-ts_{1}}f(ts_{1},\hat{X}_{ts_{1}},y) ]=0\), by Lemma \ref{lem-grad} and Lemma \ref{lem-std} we have
\begin{align*}
	     & \E^{\nu}\big|P_{ts_{2}-ts_{1}}f(ts_{1},\hat{X}_{ts_{1}},y) \big| \\
	\leq\, & \big\{ \E^{\nu}\big[ |P_{ts_{2}-ts_{1}}f(ts_{1},\hat{X}_{ts_{1}},y)|^{2} \big] \big\}^{\frac{1}{2}}\\
	\leq\, & \mathrm{Var}(\cL_{ts_{1}}[\hat{X}^{\nu}])^{\frac{1}{2}}\|\nabla_{\!z} [P_{ts_{2}-ts_{1}} \phi_{r}(y-z)] \|_{\infty}                                                                                                         \\
	=   \, & C\sup_{z\in\bR^{d}}\limsup_{|h|\to 0}\frac{|\E^{z+h}[\phi_{r}(y-\hat{X}_{ts_{2}-ts_{1}})]-\E^{z}[\phi_{r}(y-\hat{X}_{ts_{2}-ts_{1}})]|}{|h|}                             \\
	\leq\, & C \| \nabla \phi_{r}\|_{\infty}\sup_{z\in\bR^{d}}\limsup_{|h|\to 0} \frac{ (\E|\hat{X}^{z+h}_{ts_{2}-ts_{1}}-\hat{X}^{z}_{ts_{2}-ts_{1}}|^{2}\big)^{\frac{1}{2}}}{|h|} \\
	\leq\, & C \| \nabla \phi_{r}\|_{\infty}\exp\Big[-\frac{\alpha t(s_{2}-s_{1})}{2}\Big]                                                                                   \\
	\leq\, & C r^{-d}\exp\Big[-\frac{\alpha t(s_{2}-s_{1})}{2}\Big].
\end{align*}
From the above computation and keeping $\wp\in\Pi_2(\eps)$ in mind, we show that
\begin{equation}	\label{ineq-partb}
\begin{aligned}
	J_2 
	 & \leq CR^{2}r^{-\frac{d}{2}}\int_{|y|<R}\bigg\{ \int_{0}^{1}{\wp}_{t}(\md s_{1})\int_{s_{1}}^{1}C r^{-d}
	 \me^{-\frac{1}{2}\alpha t(s_{2}-s_{1})} \,{\wp}_{t}(\md s_{2})\bigg\}^{\frac{1}{2}}\md y\\
	 & \leq C R^{d+2}r^{-d}\bigg\{ \int_0^1 \!\!\int_0^1
	 \me^{-\frac{1}{2}\alpha t|s_{2}-s_{1}|}\,{\wp}_{t}(\md s_{1}){\wp}_{t}(\md s_{2}) \bigg\}^{\frac{1}{2}}\\
	 & \leq C R^{d+2}r^{-d}t^{-\frac{\eps }{2}}\bigg\{ \int_0^1 \!\!\int_0^1 t^{\eps_2}\wedge |s_{1}-s_{2}|^{-\eps_2}{\wp}_{t}(\md s_{1}){\wp}_{t}(\md s_{2}) \bigg\}^{\frac{1}{2}}\\
	 & \leq C R^{d+2}r^{-d}t^{-\frac{\eps }{2}}.
\end{aligned}
\end{equation}

Combining \eqref{ineq-1}, \eqref{ineq-2}, \eqref{ineq-3}, \eqref{ineq-parta} and \eqref{ineq-partb}, we gain that
	\[
	\E^{\nu} \big[\cW_{2}(\cE^{{\wp}}_{t}[\hat{X}],\im)^{2}\big]\leq C[r^2 + t^{-\eps } + (R-r)^{\rho} + R^{d+2} r^{-d} t^{-\frac{\eps }{2}}].
	\]
Now taking 
\[r=t^{-\frac{\rho}{2(d+2)(\rho+2)}\eps } \quad\text{and}\quad R=t^{\frac{1}{(d+2)(\rho+2)}\eps },\]
we obtain that for each \(\nu \in \cP_{2+\rho}\),
	\[\E^{\nu} \big[\cW_{2}(\cE^{{\wp}}_{t}[\hat{X}],\im)^{2}\big]\leq C t^{-\kappa\eps }.\]
The proof is complete.
\end{proof}

\section{Proofs of Theorems \ref{thm-wmv} and \ref{thm-wemv}}
By Proposition~\ref{lem-aux}, the key point is to estimate
\begin{equation*}
	\E^{\nu}\big[\cW_{2}(\cE^{{\wp}}_{t}[X],\cE^{{\wp}}_{t}[\hat{X}])^{2}\big] 
	\quad\text{and}\quad  
	\E^{y}\big[\cW_{2}(\cE^{{\wp}}_{t}[Y],\cE^{{\wp}}_{t}[\hat{X}])^{2}\big].
\end{equation*}
The following comparison lemma is crucial to bound the Wasserstein distances above.
\begin{lem}
	\label{lem-comp}
	Let \(0<\eps_{1},\,\eps_{2}<1\), \({\wl}\in\Pi_1(\eps_{1})\) and \(M\) be a non-negative constant.
If  a differentiable function \(f:[0,\infty)\to\bR\) satisfies
	\[
		f'(t)\leq -\alpha f(t)+ \beta \int_{0}^{1}f(ts)\,{\wl}_{t}(\md s) + Mt^{-\eps_{2}},
	\]
then for \(\eps=\eps_{1}\wedge \eps_{2}\),
	\[
		\label{eqn-comp}
		f(t)\leq C t^{-\eps}.
	\]
	where the constant $C$ is independent of $f$.
\end{lem}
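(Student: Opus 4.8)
The plan is a barrier argument: I would compare $f$ with the explicit super-solution $g(t)=Kt^{-\eps}$, $\eps=\eps_1\wedge\eps_2$, for a suitably large constant $K$. As a preliminary reduction, since $s^{-\eps}\le s^{-\eps_1}$ for $s\in(0,1]$ one has $\Pi_1(\eps_1)\subseteq\Pi_1(\eps)$, so (when $\beta>0$) there exist $\lambda\in(0,\alpha/\beta)$ and $T_0\ge1$ with $\int_0^1 s^{-\eps}\,{\wl}_t(\md s)\le\lambda$ for all $t\ge T_0$; if $\beta=0$ take $\lambda=0$. In all cases $\alpha-\beta\lambda>0$. I would also record an a priori bound: rewriting the hypothesis in variation-of-constants form, for $t\ge1$,
\[
f(t)\le e^{-\alpha(t-1)}f(1)+\int_1^t e^{-\alpha(t-u)}\Big(\beta\!\int_0^1 f(us)\,{\wl}_u(\md s)+Mu^{-\eps_2}\Big)\md u,
\]
and then, using ${\wl}_u\in\cP[0,1]$ to bound $\int_0^1 f(us)\,{\wl}_u(\md s)\le\sup_{[0,u]}f^{+}$ (with $f^{+}=f\vee0$), together with $u^{-\eps_2}\le1$ and $\int_1^t e^{-\alpha(t-u)}\md u\le\alpha^{-1}$, one gets $f^{+}(t)\le B_1+M\alpha^{-1}+(\beta/\alpha)\sup_{[0,t]}f^{+}$, where $B_1:=\sup_{[0,1]}f^{+}$. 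Passing to the supremum over $[0,t]$ and using $\beta<\alpha$ yields the uniform bound $\sup_{t\ge0}f(t)\le B_\infty:=(B_1+M\alpha^{-1})/(1-\beta/\alpha)$. (This part uses only $\alpha>\beta$.)

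Next I would carry out the comparison. Fix $T_1\ge\max\{T_0,\,1,\,2\eps/(\alpha-\beta\lambda)\}$, then $K>\max\{2M/(\alpha-\beta\lambda),\,B_\infty T_1^{\eps}\}$, and define $g(t):=KT_1^{-\eps}$ for $0\le t\le T_1$ and $g(t):=Kt^{-\eps}$ for $t\ge T_1$; thus $g$ is continuous and nonincreasing, $g>f$ on $[0,T_1]$, and $g(\tau)\le K\tau^{-\eps}$ for every $\tau>0$. Assume, for contradiction, that $h:=f-g$ is positive somewhere on $(T_1,\infty)$, and put $t^*:=\inf\{t>T_1:h(t)>0\}$. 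Since $g>f$ near $T_1$ one has $t^*>T_1$, and by continuity $h\le0$ on $[0,t^*]$ with $h(t^*)=0$, whence (from the left difference quotient) $h'(t^*)\ge0$. On the other hand, evaluating the hypothesis at $t^*$ and using $f(t^*)=g(t^*)=K(t^*)^{-\eps}$, $f(t^*s)\le g(t^*s)\le K(t^*s)^{-\eps}$ for $s\in(0,1]$ --- which is legitimate precisely because $t^*s\le t^*$ --- $\int_0^1 s^{-\eps}{\wl}_{t^*}(\md s)\le\lambda$, and $(t^*)^{-\eps_2}\le(t^*)^{-\eps}$, gives
\[
f'(t^*)\le-(\alpha-\beta\lambda)K(t^*)^{-\eps}+M(t^*)^{-\eps};
\]
since $g'(t^*)=-\eps K(t^*)^{-\eps-1}$, we conclude
\[
h'(t^*)\le(t^*)^{-\eps}\Big[M-(\alpha-\beta\lambda)K+\tfrac{\eps K}{t^*}\Big]\le(t^*)^{-\eps}\Big[M-\tfrac12(\alpha-\beta\lambda)K\Big]<0,
\]
a contradiction. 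Hence $h\le0$ on $[T_1,\infty)$, i.e.\ $f(t)\le Kt^{-\eps}$ there; combined with $f(t)\le B_\infty\le B_\infty T_1^{\eps}\,t^{-\eps}$ on $(0,T_1]$, this yields $f(t)\le Ct^{-\eps}$ with $C=\max\{K,\,B_\infty T_1^{\eps}\}$.

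The main obstacle is that the inequality is of distributed-delay type --- $f'(t)$ feels $f$ on the whole interval $[0,t]$ through $\int_0^1 f(ts)\,{\wl}_t(\md s)$ --- so no ODE comparison principle applies off the shelf. The device that makes the first-crossing argument work is to take $g$ \emph{nonincreasing} (constant on $[0,T_1]$, then $\propto t^{-\eps}$), which forces $f(t^*s)\le g(t^*s)$ simultaneously for all sampling times $t^*s\in[0,t^*]$, and to have arranged $\eps\le\eps_1$ so that $\int_0^1 s^{-\eps}{\wl}_t(\md s)$ still stays eventually below $\alpha/\beta$. One should also note that the constant $C$ depends on the data $\alpha,\beta,\eps_1,\eps_2,M,{\wl}$ and, through $B_\infty$, on $\sup_{[0,1]}f^{+}$; it is ``independent of $f$'' in the sense that a single $C$ serves any family of solutions of the inequality uniformly bounded on $[0,1]$, which holds in every application in this paper.
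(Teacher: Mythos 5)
Your argument is correct, and at its core it is the same comparison idea as the paper's: compare $f$ with the power barrier $t^{-\eps}$, using that $\wl\in\Pi_1(\eps_1)$ gives an asymptotic margin $\alpha-\beta\lambda>0$ which, for large $t$, beats both the derivative term $-\eps t^{-\eps-1}$ and the inhomogeneity $Mt^{-\eps_2}$. The execution, however, differs in a worthwhile way. The paper rescales the barrier by the factor $m^{-1}M+T^{\eps}f(T)$, subtracts, and reduces to the homogeneous inequality $h'(t)\ge-\alpha h(t)+\beta\int_0^1 h(ts)\,\wl_t(\md s)$ with $h(T)\ge0$, then asserts that positivity propagates for $t>T$; because the delay term samples $h$ on all of $[0,t]$, this propagation really requires control of $h$ on the whole initial segment $[0,T]$ (where the paper's $h$ need not be nonnegative if $f$ has a bump before $T$), a point the paper leaves implicit. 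Your proof supplies exactly the missing ingredients: an a priori uniform bound on $f^+$ via variation of constants (using $\alpha>\beta$), a barrier flattened to a constant on $[0,T_1]$ so that it dominates $f$ at every sampling time, and an explicit first-crossing contradiction at $t^*$. This buys a self-contained, airtight argument at the cost of a slightly more elaborate barrier; the paper's version is shorter and isolates the supersolution property of $t^{-\eps}$ cleanly. Your closing remark about the constant is also consistent with the paper: their $C$ depends on $f(T)$, yours on $\sup_{[0,1]}f^+$, so in both cases ``independent of $f$'' can only mean dependence through an a priori bound on an initial segment, which is how the lemma is used in Theorems~\ref{thm-wmv}--\ref{thm-wemv}.
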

\begin{proof}
	Define 
	\[g(t):=t^{-\eps},\]
	where \(\eps\leq\eps_{1}\wedge \eps_{2}\).
	Since \({\wl}\in\Pi_1(\eps_{1})\), we have 
	\[\limsup_{t\to\infty}\int_{0}^{1}s^{-\eps_{1}}\,{\wl}_{t}(\md s)<\frac{\alpha}{\beta}.\]
	By direct computation, there exist numbers \(m,\,T>0\) such that for any \(t>T\)
	\[g'(t)\geq -\alpha g(t)+ \beta \int_{0}^{1}g(ts) \,{\wl}_{t}(\md s) + mt^{-\eps_{2}}.\]
	Define
	\[h(t)=[m^{-1}M+T^{\eps}f(T)]g(t)-f(t).\]
	Then, we have 
	\[h'(t)\geq -\alpha h(t)+ \beta \int_{0}^{1}h(ts)\, {\wl}_{t}(\md s)\]
	and \(h(T)\geq 0\).
	This leads to \(h(t)\geq 0\) for \(t>T\) which implies the desired estimate.
\end{proof}

\begin{proof}[Proof of Theorem \ref{thm-wmv}]
	Let $X$ and $\hat{X}$ be the solutions to \eqref{eqn-wmv} and \eqref{eqn-aux}, respectively, with the same initial data $\xi\sim\nu$.
	We first estimate \(\E^{\nu} |X_{t}-\hat{X}_{t}|^{2}\).
	By It\^o's formula and Assumption~\ref{assump}, we have
	\begin{align*}
		\frac{\md}{\md t}\E^{\nu}|X_{t}-\hat{X}_{t}|^{2}\leq &-\alpha\E^{\nu}|X_{t}-\hat{X}_{t}|^{2} + \beta\E^{\nu}\big[\cW_{2}(\cL_{t}^{{\wl}}[X],\im)^{2}\big]   \\
		\leq& -\alpha\E^{\nu}|X_{t}-\hat{X}_{t}|^{2}+\hat{\beta}\E^{\nu}\big[\cW_{2}(\cL^{{\wl}}_{t}[X],\cL^{{\wl}}_{t}[\hat{X}])^{2}\big]\\
		 &+C\E^{\nu}\big[\cW_{2}(\cL^{{\wl}}_{t}[\hat{X}],\im)^{2}\big]\\
		\leq& -\alpha\E^{\nu}|X_{t}-\hat{X}_{t}|^{2}+\hat{\beta} \int_{0}^{1}\E^{\nu}|X_{ts}-\hat{X}_{ts}|^{2}{\wl}_{t}(\md s)\\
		&+C\E^{\nu}\big[\cW_{2}(\cL^{{\wl}}_{t}[\hat{X}],\im)^{2}\big],
	\end{align*}
	where \(\beta<\hat{\beta}<\alpha\).
	By Lemma \ref{lem-std} and the condition $\wl\in \Pi_1(\eps_1)$, we have
	\begin{align*}
		\E^{\nu}\big[\cW_{2}(\cL^{{\wl}}_{t}[\hat{X}],\im)^{2}\big]
		& \leq \int_{0}^{1}\E^{\nu}\big[ \cW_{2}(\cL^{{\wl}}_{ts}[\hat{X}],\im)^{2} \big]{\wl}_{t}(\md s)\\
		& \le C \int_{0}^{1} \me^{-\alpha ts}\,{\wl}_{t}(\md s)\\
		& \leq C t^{-\eps_{1}}.
	\end{align*}
	Let \(f(t)=\E^{\nu} |X_{t}-\hat{X}_{t}|^{2}\).
	So, we have shown 
	\[f'(t)\leq -\alpha f(t) + \hat{\beta}\int_{0}^{1}f(ts)\,{\wl}_{t}(\md s)+ C t^{-\eps_{1}}.\]
	By Lemma \ref{lem-comp}, this leads to
	\begin{equation}\E^{\nu} |X_{t}-\hat{X}_{t}|^{2}\leq C t^{-\eps_{1}}.\end{equation}
	Then, we gain that
	\begin{align}
		\E^{\nu}\big[\cW_{2}(\cE^{{\wp}}_{t}[X],\cE^{{\wp}}_{t}[\hat{X}])^{2}\big]&\leq
		\int_{0}^{1}\E^{\nu}|X_{ts}-\hat{X}_{ts}|^{2}\,{\wp}_{t}(\md s)\nonumber\\
		&\leq C t^{-\eps}\int_{0}^{1}s^{-\eps}\,{\wp}_{t}(\md s)\nonumber\\
		&\leq C t^{-\eps},
	\end{align}
	where \(\eps=\eps_{1}\wedge\eps_{2}\). 
	Combining with the estimate of \(\E^{\nu}[ \cW_{2}(\cE^{{\wp}}_{t}[\hat{X}],\im)^{2} ]\) in Proposition~\ref{lem-aux}, we finally prove that for each \(\nu\in\cP_{2+\rho} \), there exists a constant \(C\) such that
	\begin{equation*}
		\E^{\nu}\big[ \cW_{2}(\cE_{t}^{{\wp}}[X],\im)^{2} \big]\leq C t^{-(\eps_{1}\wedge\kappa\eps_{2})}.
	\end{equation*}
	The proof is complete.
\end{proof}

\begin{proof}[Proof of Theorem \ref{thm-wemv}]
	The proof is similar to the proof of Theorem \ref{thm-wmv} except that we estimate \(\E^{y}|Y_{t}-\hat{X}_{t}|^{2}\) instead of \(\E^{\nu}|X_{t}-\hat{X}_{t}|^{2}\).
	Let $Y$ and $\hat{X}$ be the solutions to \eqref{eqn-wemv} and \eqref{eqn-aux}, respectively, with the same initial point $y\in\bR^d$.
	By It\^o's formula and Assumption~\ref{assump}, we have
	\begin{align*}
		\frac{\md}{\md t}\E^{y}|Y_{t}-\hat{X}_{t}|^{2} & \leq -\alpha\E^{y}|Y_{t}-\hat{X}_{t}|^{2} + \beta\E^{y}\big[\cW_{2}(\cE_{t}^{{\wl}}[Y],\im)^{2}\big]   \\
		&\leq -\alpha\E^{y}|Y_{t}-\hat{X}_{t}|^{2}+\hat{\beta}\E^{y}\big[\cW_{2}(\cE^{{\wl}}_{t}[Y],\cE^{{\wl}}_{t}[\hat{X}])^{2}\big]\\
		 & \quad +C\E^{y}\big[\cW_{2}(\cE^{{\wl}}_{t}[\hat{X}],\im)^{2}\big]\\
		& \leq -\alpha\E^{y}|Y_{t}-\hat{X}_{t}|^{2}+\hat{\beta} \int_{0}^{1}\E^{y}|Y_{ts}-\hat{X}_{ts}|^{2}\,{\wl}_{t}(\md s)\\
		&\quad +C\E^{y}\big[\cW_{2}(\cE^{{\wl}}_{t}[\hat{X}],\im)^{2}\big],
	\end{align*}
	where \(\beta<\hat{\beta}<\alpha\).
	Since \({\wl}\in \Pi_2(\eps_2)\), by Proposition~\ref{lem-aux} we have 
	\begin{align*}
		\E^{y}\big[\cW_{2}(\cE^{{\wl}}_{t}[\hat{X}],\im)^{2}\big]\leq C t^{-\kappa\eps_2}.
	\end{align*}
	Let \(f(t)=\E^{y} |Y_{t}-\hat{X}_{t}|^{2}\).
	Thus, we obtain that
	\[f'(t)\leq -\alpha f(t) + \hat{\beta}\int_{0}^{1}f(ts) \,{\wl}_{t}(\md s)+ C t^{-\kappa\eps_2}.\]
	By Lemma \ref{lem-comp}, this leads to
	\begin{equation}\E^{y} |Y_{t}-\hat{X}_{t}|^{2}\leq C t^{-(\eps_1 \wedge\kappa\eps_2)}.
	\end{equation}	
	Then we gain that
	\begin{align}
		\E^{y}\big[\cW_{2}(\cE^{{\wp}}_{t}[Y],\cE^{{\wp}}_{t}[\hat{X}])^{2}\big] & \leq
		\int_{0}^{1}\E^{y}|Y_{ts}-\hat{X}_{ts}|^{2}\,{\wp}_{t}(\md s)\nonumber\\
		& \leq C t^{-\eps}\int_{0}^{1}s^{-\eps} \,{\wp}_{t}(\md s)\nonumber\\
		& \leq C t^{-\eps},
	\end{align}
	where \(\eps=\eps_1 \wedge\kappa\eps_2 \). 
	Together with the estimate of \(\E^{y}[ \cW_{2}(\cE^{{\wp}}_{t}[\hat{X}],\im)^{2} ]\) in Proposition~\ref{lem-aux}, we finally obtain that
	\begin{equation*}
		\E^{y}\big[ \cW_{2}(\cE_{t}^{{\wp}}[Y],\im)^{2} \big]\leq C t^{-(\eps_1 \wedge\kappa\eps_2)}.
	\end{equation*}
	The proof is complete.
\end{proof}

\section{Multi-particle empirical approximation}
In this section, we extend our empirical approximation to multi-particle systems,
and prove Theorem~\ref{thm-multi}.
Recall that
\[
	\md Y_{t}^{i}=b\bigg(Y_{t}^{i},\frac{1}{N}\sum_{j=1}^{N}\cE_{t}^{{\wl}}[Y^{j}]\bigg)\md t +\sigma\bigg(Y_{t}^{i},\frac{1}{N}\sum_{j=1}^{N}\cE_{t}^{{\wl}}[Y^{j}]\bigg)\md W_{t}^{i}.
\]

For $i=1,\dots,N$, let
\begin{equation}
	\md \hat{X}_{t}^{i}=b(\hat{X}_{t}^{i},\im)\md t +\sigma(\hat{X}_{t}^{i},\im)\md W_{t}^{i}.
\end{equation}
We first show a multi-particle version of Lemma  \ref{lem-aux}.
\begin{lem}
	\label{lem-multi-aux}
Let \({\wp}\in\Pi_2(\eps )\) with \(0<\eps <1\).
	For each \(\bm{x}=(x^1,\dots,x^N)\in\bR^{d\times N}\), there exists a constant \(C\) such that
	\begin{align*}
	\E^{\bm{x}} \bigg[\cW_{2}\bigg(\frac{1}{N}\sum_{i=1}^{N}\cE^{{\wp}}_{t}[\hat{X}^{i}],\cL^{{\wp}}_{t}[\hat{X}]\bigg)^{\!2}\bigg]
	& \leq C  t^{-\kappa\eps }N^{-\kappa},\\
	\E^{\bm{x}}\big[  \cW_{2}(\cL^{{\wp}}_{t}[\hat{X}],\im)^{2}\big]
	& \leq C t^{-\eps }.
	\end{align*}
	Therefore, one has that
	\[\E^{\bm{x}} \bigg[\cW_{2}\bigg(\frac{1}{N}\sum_{i=1}^{N}\cE^{{\wp}}_{t}[\hat{X}^{i}],\im\bigg)^{\!2}\bigg]\leq C  t^{-\kappa\eps }N^{-\kappa}+C t^{-\eps }.\]
\end{lem}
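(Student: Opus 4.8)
The plan is to run the proof of Proposition~\ref{lem-aux} in parallel for the $N$ independent particles $\hat X^1,\dots,\hat X^N$, the one genuinely new ingredient being a variance reduction coming from their independence. Write $\bar{\cE}_t:=\frac{1}{N}\sum_{i=1}^N\cE^{\wp}_t[\hat X^i]$ and $\cL^{\wp}_t[\hat X]:=\frac{1}{N}\sum_{i=1}^N\cL^{\wp}_t[\hat X^i]$ for its (weighted, averaged) mean law. First I would mollify exactly as before: fix a bounded random vector $\xi$ with $|\xi|\le1$ and smooth density, let $\Phi_r$ have density $\phi_r$ with $\Phi_r=\mathrm{law}(r\xi)$, and pass to $\Phi_r*\bar{\cE}_t$ and $\Phi_r*\cL^{\wp}_t[\hat X]$, whose densities are $\frac{1}{N}\sum_i\int_0^1\phi_r(\cdot-\hat X^i_{ts})\,\wp_t(\md s)$ and $\frac{1}{N}\sum_i\int_0^1\E\phi_r(\cdot-\hat X^i_{ts})\,\wp_t(\md s)$; by Lemma~\ref{lem-moll} the mollification costs only $Cr^2$ almost surely, as in~\eqref{ineq-2}. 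The second asserted bound is then obtained verbatim from~\eqref{ineq-3}: using convexity of $\cW_2^2$ twice, the contraction $\cW_2(\cL_{ts}[\hat X^i],\im)^2\le\cW_2(\delta_{x^i},\im)^2\me^{-\alpha ts}$ from Lemma~\ref{lem-std}, and $\wp\in\Pi_2(\eps)$ to turn $\int_0^1\me^{-\alpha ts}\,\wp_t(\md s)$ into $Ct^{-\eps}$, one gets $\E^{\bm{x}}[\cW_2(\cL^{\wp}_t[\hat X],\im)^2]\le Ct^{-\eps}$.

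For the first asserted bound I would apply the density coupling Lemma~\ref{lem-decouple} to the two mollified measures and split the $\bR^d$-integral at $|y|=R$ into a tail part $J_1$ (region $|y|>R$) and a core part $J_2$ (region $|y|\le R$), with $r$ and $R$ to be chosen later as powers of $t$ and $N$. In $J_1$, bounding the difference of densities by their sum and using Fubini, the compact support of $\phi_r$, the split $|y|^2\le2|y-\hat X^i_{ts}|^2+2|\hat X^i_{ts}|^2$, the uniform $(2+\rho)$-integrability of Lemma~\ref{lem-std}, and Markov's inequality exactly as in~\eqref{ineq-parta}, and then averaging over $i$, yields $J_1\le Cr^2+C(R-r)^{-\rho}$ (no gain in $N$ here, since this is a first-moment estimate).

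The core part $J_2$ is where independence is used. Set $f^i(s,x,y)=\phi_r(y-x)-\E\phi_r(y-\hat X^i_s)$; applying $\E|Z|\le(\E Z^2)^{1/2}$ inside the $y$-integral, the quantity to control is
\[
\E^{\bm{x}}\Bigl[\Bigl(\tfrac{1}{N}\sum_{i=1}^N\int_0^1 f^i(ts,\hat X^i_{ts},y)\,\wp_t(\md s)\Bigr)^{2}\Bigr]
=\frac{1}{N^2}\sum_{i,j=1}^N\E^{\bm{x}}\Bigl[\int_0^1\!\!\int_0^1 f^i(ts_1,\hat X^i_{ts_1},y)f^j(ts_2,\hat X^j_{ts_2},y)\,\wp_t(\md s_1)\wp_t(\md s_2)\Bigr].
\]
Since the $\hat X^i$ are driven by independent Brownian motions and start at deterministic points, while $\E[f^j(s,\hat X^j_s,y)]=0$ for every $j$, the off-diagonal terms $i\ne j$ factor into products of mean-zero expectations and vanish; only the $N$ diagonal terms remain, producing a factor $N^{-1}$. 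Each diagonal term is handled exactly as in Proposition~\ref{lem-aux}: restrict to $s_2\ge s_1$ by symmetry, condition on $\hat X^i_{ts_1}$ and invoke the Markov property of $\hat X^i$ to replace $f^i(ts_2,\cdot,y)$ by $P_{t(s_2-s_1)}f^i(ts_1,\cdot,y)$, apply the weak Poincar\'e inequality Lemma~\ref{lem-grad} with $\mathrm{Var}(\cL_{ts_1}[\hat X^i])$ bounded uniformly by Lemma~\ref{lem-std}, and estimate $\|\nabla_z P_{t(s_2-s_1)}\phi_r(y-z)\|_\infty\le Cr^{-d}\me^{-\alpha t(s_2-s_1)/2}$ via the contraction bound of Lemma~\ref{lem-std}; combining with $\|f^i\|_\infty\le Cr^{-d}$ and $\wp\in\Pi_2(\eps)$ to control $\int_0^1\!\!\int_0^1\me^{-\alpha t|s_1-s_2|/2}\,\wp_t(\md s_1)\wp_t(\md s_2)\le Ct^{-\eps}$ gives $J_2\le CR^{d+2}r^{-d}t^{-\eps/2}N^{-1/2}$.

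Putting $J_1$ and $J_2$ together,
\[
\E^{\bm{x}}\bigl[\cW_2(\bar{\cE}_t,\cL^{\wp}_t[\hat X])^2\bigr]\le C\bigl[r^2+(R-r)^{-\rho}+R^{d+2}r^{-d}t^{-\eps/2}N^{-1/2}\bigr],
\]
which is exactly the estimate from Proposition~\ref{lem-aux} with the last term multiplied by $N^{-1/2}$. Choosing $r$ and $R$ to balance the three terms — the same choice as in Proposition~\ref{lem-aux} but with $t^{\eps}$ replaced by $t^{\eps}N$, namely $r=(t^{\eps}N)^{-\kappa/2}$ and $R=(t^{\eps}N)^{\kappa/\rho}$ — makes all three terms of order $t^{-\kappa\eps}N^{-\kappa}$, because the identity $\kappa(d+2)(\rho+2)=\rho$, i.e.\ \eqref{kappa}, is precisely what forces the exponents to match; hence $\E^{\bm{x}}[\cW_2(\bar{\cE}_t,\cL^{\wp}_t[\hat X])^2]\le Ct^{-\kappa\eps}N^{-\kappa}$. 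The final combined estimate then follows from the triangle inequality for $\cW_2$ and $(a+b)^2\le2a^2+2b^2$. I expect the only steps needing real care — beyond copying Proposition~\ref{lem-aux} — to be checking that the diagonal estimates in $J_2$ are uniform in $i$ (so the implicit constants do not grow with $N$), and the bookkeeping in the final $(r,R)$-optimization that produces the extra factor $N^{-\kappa}$.
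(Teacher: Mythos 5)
Your proposal is correct and follows essentially the same route as the paper: mollification plus the density coupling lemma, the same $J_1$/$J_2$ split, independence of the particles producing the $N^{-1/2}$ factor in $J_2$ via the mean-zero/off-diagonal cancellation, re-use of the single-particle Markov-property estimate for the diagonal terms, and the identical $(r,R)$ choice in powers of $t^{\eps}N$ giving $t^{-\kappa\eps}N^{-\kappa}$. Your explicit per-particle centering and the averaged-law reading of $\cL^{\wp}_t[\hat X]$ is in fact a slightly more careful rendering of the step the paper states tersely.
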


\begin{proof}
	We follow a similar procedure to the proof of Proposition~\ref{lem-aux}.
	For the mollified empirical measures, we get exactly the same estimates
	\begin{align*}
		\cW_{2}\bigg( \frac{1}{N}\sum_{i=1}^{N}\cE^{{\wp}}_{t}[\hat{X}^{i}],\frac{1}{N}\sum_{i=1}^{N}\cE^{{\wp}}_{t,r}[\hat{X}^{i}] \bigg)^{2}
		+	\cW_{2}(\cL^{{\wp}}_{t,r}[\hat{X}],\cL^{{\wp}}_{t}[\hat{X}])^{2}
		 \leq Cr^{2}.
	\end{align*}

	Now, we focus on the estimate of 
	\[
	\cW_{2}\bigg( \frac{1}{N}\sum_{i=1}^{N}\cE^{{\wp}}_{t}[\hat{X}^{i}],\cL^{{\wp}}_{t,r}[\hat{X}] \bigg).
	\]
	Notice the density of \(\frac{1}{N}\sum_{i=1}^{N}\cE^{{\wp}}_{t}[\hat{X}^{i}]\) is
	\[\int_{0}^{1}\frac{1}{N}\sum_{i=1}^{N}\phi_{r}(\cdot -\hat{X}_{ts}^{i}) \, {\wp}_{t}(\md s).\]
	Therefore, by Lemma \ref{lem-decouple} we have
\begin{align*}
	&\E^{\bm{x}}\bigg[\cW_{2}\bigg(\frac{1}{N}\sum_{i=1}^{N}\cE^{{\wp}}_{t,r}[\hat{X}^{i}],\cL^{{\wp}}_{t,r}[\hat{X}]\bigg)^{\!2}\bigg]\\
	& \leq  C\int_{|y|>R} |y|^{2}\,\E^{\bm{x}}\bigg|\int_{0}^{1}\bigg[\frac{1}{N}\sum_{i=1}^{N}\phi_{r}(y-\hat{X}^{i}_{ts})-\E^{\bm{x}}\phi_{r}(y-\hat{X}_{ts})\bigg]{\wp}_{t}(\md s)\bigg|\md y       \\
	& \quad + C\int_{|y|\leq R} |y|^{2}\,\E^{\bm{x}}\bigg|\int_{0}^{1}\bigg[\frac{1}{N}\sum_{i=1}^{N}\phi_{r}(y-\hat{X}^{i}_{ts})-\E^{\bm{x}}\phi_{r}(y-\hat{X}_{ts})\bigg]{\wp}_{t}(\md s)\bigg|\md y \\
	& =: J_1 + J_2.
\end{align*}
    As  we did in the proof of Proposition~\ref{lem-aux}, we also have
	\[
		J_1\leq Cr^{2}+C (R-r)^{-\rho}.
	\]
	
	For $J_2$, we notice
	\begin{align*}
		J_2 & = C\int_{|y|\leq R} |y|^{2}\,\E^{\bm{x}}\bigg|\int_{0}^{1}\bigg[\frac{1}{N}\sum_{i=1}^{N}\phi_{r}(y-\hat{X}^{i}_{ts})-\E^{\bm{x}}\phi_{r}(y-\hat{X}_{ts})\bigg]{\wp}_{t}(\md s)\bigg|\md y\\
		&= C\int_{|y|\leq R} |y|^{2}\,\E^{\bm{x}}\bigg|\frac{1}{N}\sum_{i=1}^{N}\int_{0}^{1}\big[\phi_{r}(y-\hat{X}^{i}_{ts})-\E^{\bm{x}}\phi_{r}(y-\hat{X}_{ts})\big]{\wp}_{t}(\md s)\bigg|\md y\\
		&\leq C\int_{|y|\leq R} |y|^{2}\bigg\{\E^{\bm{x}}\bigg|\frac{1}{N}\sum_{i=1}^{N}\int_{0}^{1}\big[\phi_{r}(y-\hat{X}^{i}_{ts})-\E^{\bm{x}}\phi_{r}(y-\hat{X}_{ts})\big]{\wp}_{t}(\md s)\bigg|^{2}\bigg\}^{\frac{1}{2}}\md y\\
		&= CN^{-\frac{1}{2}}\int_{|y|\leq R} |y|^{2}\bigg\{\E^{\bm{x}}\bigg|\int_{0}^{1}\big[\phi_{r}(y-\hat{X}^{i}_{ts})-\E^{\bm{x}}\phi_{r}(y-\hat{X}_{ts})\big]{\wp}_{t}(\md s)\bigg|^{2}\bigg\}^{\frac{1}{2}}\md y\\
		&\leq C R^{d+2}r^{-d}t^{-\frac{\eps }{2}}N^{-\frac{1}{2}}.
	\end{align*}
	Here, the third equality comes from the fact that \((\hat{X}^{i})_{1\leq i\leq N}\) are independent and 
	\[\E^{\bm{x}}\int_{0}^{1}\big[\phi_{r}(y-\hat{X}^{i}_{ts})-\E^{\bm{x}}\phi_{r}(y-\hat{X}_{ts})\big]{\wp}_{t}(\md s)=0.\]

	To sum up, we obtain that 
	\[
	\E^{\bm{x}} \bigg[\cW_{2}\bigg(\frac{1}{N}\sum_{i=1}^{N}\cE^{{\wp}}_{t}[\hat{X}^{i}],\cL^{{\wp}}_{t}[\hat{X}]\bigg)^{\!2}\bigg]
	\le C r^2 + C(R-r)^{-\rho} + C R^{d+2}r^{-d}t^{-\frac{\eps }{2}} N^{-\frac{1}{2}}. 
	\]
By setting \[r=t^{-\frac{\rho}{2(d+2)(\rho+2)}\eps }N^{-\frac{\rho}{2(d+2)(\rho+2)}}  \quad\text{and}\quad R=t^{\frac{1}{(d+2)(\rho+2)}\eps }N^{\frac{1}{(d+2)(\rho+2)}},\]
we have that
\[
\E^{\bm{x}} \bigg[\cW_{2}\bigg(\frac{1}{N}\sum_{i=1}^{N}\cE^{{\wp}}_{t}[\hat{X}^{i}],\cL^{{\wp}}_{t}[\hat{X}]\bigg)^{\!2}\bigg]\leq C  t^{-\kappa\eps }N^{-\kappa}.
\]
The proof is complete.
\end{proof}

\begin{proof}[Proof of Theorem \ref{thm-multi}]
	We follow the same lines of the proof of Theorem \ref{thm-wemv}.
	Let \(f(t)=\E^{\bm{y}}|Y^{i}_{t}-\hat{X}^{i}_{t}|^{2}\). 
	By It\^o's formula and Lemma \ref{lem-multi-aux}, we have
	\begin{align*}f'(t) & \leq \alpha f(t)+\hat{\beta}\int_{0}^{1} f(ts)\,{\wl}_{t}(\md s)+C\E^{\bm{y}}\bigg[\cW_{2}\bigg( \frac{1}{N}\sum_{i=1}^{N}\cE^{{\wl}}_{t}[\hat{X}^{i}],\im \bigg)^{\!2}\bigg]\\
		& \leq \alpha f(t)+\hat{\beta}\int_{0}^{1} f(ts)\,{\wl}_{t}(\md s)+C t^{-\kappa\eps_2}N^{-\kappa}+C t^{-\eps_2}
	\end{align*}
	By Lemma \ref{lem-comp}, we obtain
	\begin{equation}
	\label{est-multi}f(t)\leq  C t^{-(\eps_1\wedge\kappa\eps_2)}N^{-\kappa}+C t^{-(\eps_1\wedge\eps_2)}.
	\end{equation}
	Then one has that
	\begin{align*}
			&\E^{\bm{y}}\bigg[ \cW_{2}\bigg(\frac{1}{N}\sum_{i=1}^{N}\cE_{t}^{{\wp}}[Y^{i}],\frac{1}{N}\sum_{i=1}^{N}\cE_{t}^{{\wp}}[\hat{X}^{i}]\bigg)^{\!2} \bigg]\\
			\leq\,&\frac{1}{N}\sum_{i=1}^{N}	\E^{\bm{y}}\big[ \cW_{2}(\cE_{t}^{{\wp}}[Y^{i}],\cE_{t}^{{\wp}}[\hat{X}^{i}])^{\!2} \big]\\
			\leq\,& \int_{0}^{1}\E^{\bm{y}}|Y^{i}_{ts}-\hat{X}^{i}_{ts}|^{2}\,{\wp}_{t}(\md s).
	\end{align*}
	This along with \eqref{est-multi}, we have
		\begin{equation*}
			\E^{\bm{y}}\bigg[ \cW_{2}\bigg(\frac{1}{N}\sum_{i=1}^{N}\cE_{t}^{{\wp}}[Y^{i}],\frac{1}{N}\sum_{i=1}^{N}\cE_{t}^{{\wp}}[\hat{X}^{i}]\bigg)^{\!2} \bigg]
			\leq C t^{-(\eps_1\wedge\kappa\eps_2)}N^{-\kappa}+C t^{-(\eps_1\wedge\eps_2)}.
		\end{equation*}
	This combined with Lemma \ref{lem-multi-aux} concludes the proof.
\end{proof}

\section{Numerical examples}
In this section, we consider the following mean-field Ornstein--Uhlenbeck process:
\begin{equation}
    \label{eqn-ou}
    \md X_{t}=\big(-2X_{t}-\E X_{t}\big)\md t+\big(2-\sqrt{\E|X_{t}|^{2}}\big) \md W_{t}.
\end{equation}
It is straightforward to check that \(X_{t}\) satisfies Assumption~\ref{assump}.
Indeed, \(X_{t}\) has a unique invariant measure \(\im\) following the normal distribution \(\cN(0,4/9)\).

We set the time step \(\tau=1\).
As stated in \eqref{eqn-dis}, we compute  \(\hat{\cE}_{t}[\hat{Y}]\) and compare with \(\hat{\cE}_{t}[X]\).
Notice that \(\hat{Y}\) satisfies
\begin{equation*}
	\label{eqn-e}
		\begin{aligned}
			\md \hat{Y}_{t}&=(-2 \hat{Y}_{t}- m_{t})\md t +(2-\sqrt{v_{t}}) \md W_{t},
		\end{aligned}
\end{equation*}
where 
\begin{equation}
	\label{eqn-m}
	m_{t}=\frac{1}{[ t]}\sum_{k=0}^{[ t] -1} \hat{Y}_{k}\quad\text{ and }\quad v_{t}=\frac{1}{[ t]}\sum_{k=0}^{[ t] -1} |\hat{Y}_{k}|^{2} .
\end{equation}
Since \(m_{t}\) and \(v_{t}\) are piecewise constant, we can compute \(\hat{Y}\) step by step.
On an integer time interval \((k,k+1]\), \(\hat{Y}_{t}\) satisfies 
\[\hat{Y}_{t}=\me^{-2(t-k)}   \hat{Y}_{k}-m_
{k}\int_{k}^{t}\me^{-2(t-s)}\md s +(2-\sqrt{v_{k}})\int_{k}^{t}\me^{-2(t-s)}\md W_{s}.\]
In particular, we obtain a recursive formula:
\begin{equation*}
	\hat{Y}_{k+1}=\me^{-2}   \hat{Y}_{k}-\frac{1-\me^{-2}}{2}m_{k}+\Big(1-\frac{\sqrt{v_{k}}}{2}\Big)\sqrt{1-\me^{-4}}\,\xi_{k},
\end{equation*}
where \(\xi_{k}\) are independent standard normal random variables.
This allows us to compute \(\hat{Y}_{t}\) and its empirical measure online.

Also, we calculate the multi-particle approximation in the same way.
For an \(N\)-particle system, we have
\begin{equation*}
	\hat{Y}^{i}_{k+1}=\me^{-2}   \hat{Y}^{i}_{k}-\frac{1-\me^{-2}}{2}m_{k}+\Big(1-\frac{\sqrt{v_{k}}}{2}\Big)\sqrt{1-\me^{-4}}\,\xi^{i}_{k},
\end{equation*}
where
\begin{equation*}
	m_{t}=\frac{1}{[ t]}\sum_{k=0}^{[ t] -1} \frac{1}{N}\sum_{i=1}^{N}\hat{Y}^{i}_{k}\quad \text{and}\quad v_{t}=\frac{1}{[ t]}\sum_{k=0}^{[ t] -1} \frac{1}{N}\sum_{i=1}^{N}|\hat{Y}^{i}_{k}|^{2}.
\end{equation*}

\begin{figure}[!tb]
	\centering
	\includegraphics[width=\textwidth]{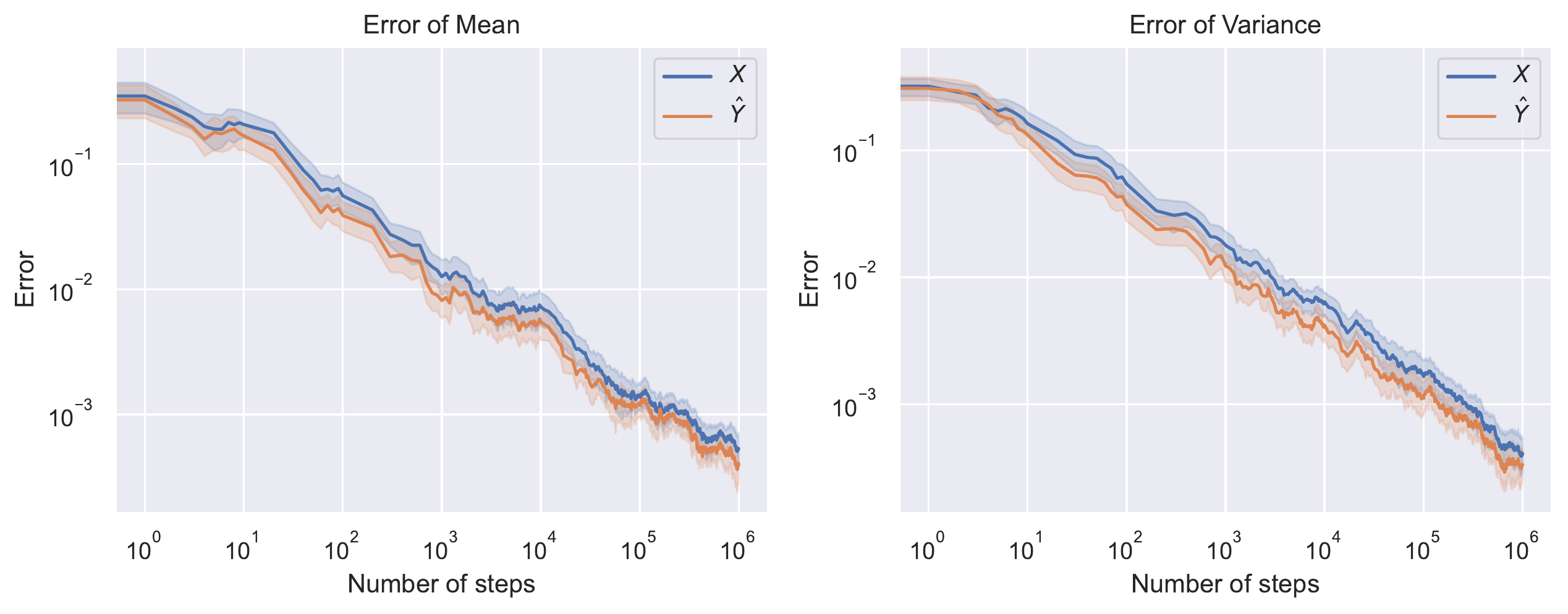}
	\caption{Errors of empirical measures of McKean--Vlasov process \(X\) (with mean-field interaction) and path-dependent process \(\hat{Y}\) (with self-interaction)}
	\label{fig-1}
\end{figure}

\begin{figure}[!tb]
        \centering
        \includegraphics[width=\textwidth]{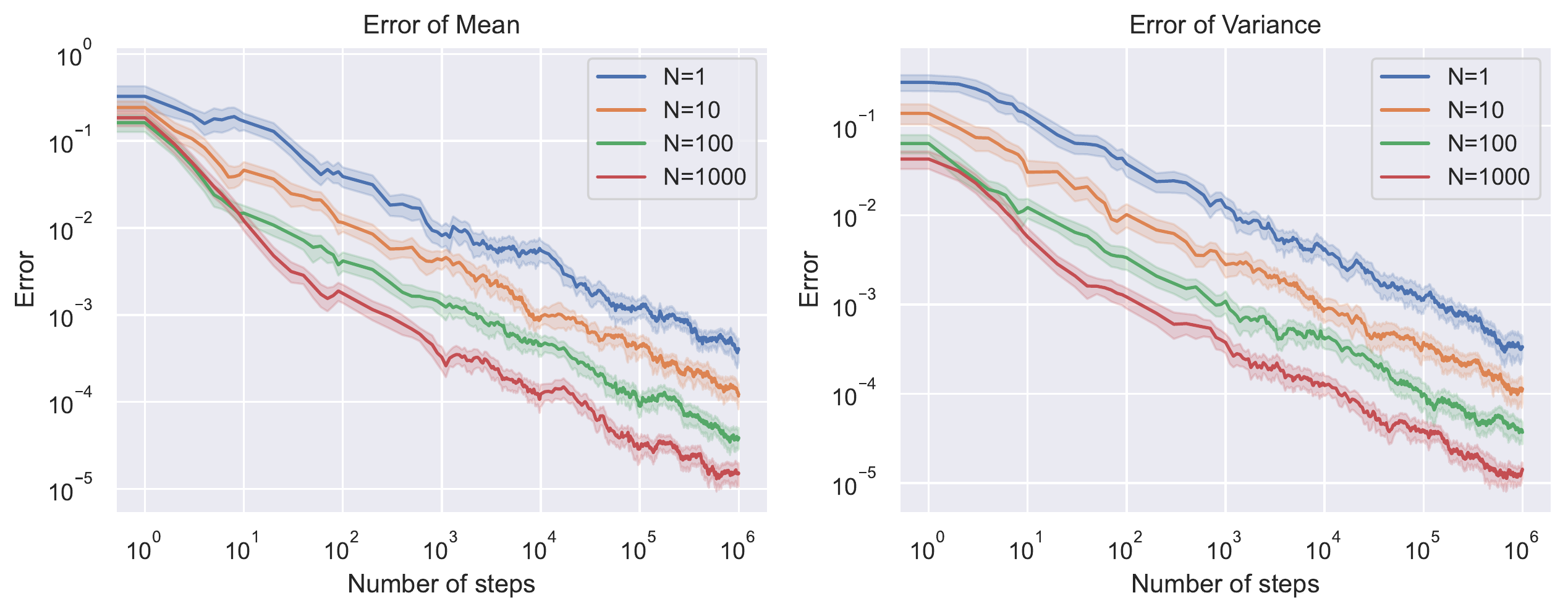}
        \caption{Errors of empirical measures of path-dependent process \(\hat{Y}\) with different number of particles.}
        \label{fig-ou}
\end{figure}

In our numerical experiments, we run \(10^{6}\) steps on a single sample path independently 30 times.
We record the mean and variance of the empirical measures \(\hat{\cE}_{t}[X]\) and \(\hat{\cE}_{t}[\hat{Y}]\).
In Figure~\ref{fig-1}, we illustrate the error of mean (left) and the error of variance (right) against the number of steps.
The shaded areas denote the 95\% confidence interval.
We see that the empirical measures of path-dependent process \(\hat{Y}\) have the same convergence performance as the counterpart of McKean--Vlasov process \(X\).
In Figure~\ref{fig-ou}, we further compare the results of multi-particle systems with 1, 10, 100, 1000 particles.
They roughly have the same convergence rate, while the constant is decreasing with the increase of the  number of particles.

We emphasize that our algorithm is space-saving for a wide range of processes.
In practice, we use
\begin{equation*}
	m_{k+1}=\frac{k}{k+1}m_{k}+\frac{1}{(k+1)N}\sum_{i=1}^{N}Y^{i}_{k} \quad\text{and}\quad v_{k+1}=\frac{k}{k+1}v_{k}+\frac{1}{(k+1)N}\sum_{i=1}^{N}|Y^{i}_{k}|^{2}
\end{equation*}
rather than equation \eqref{eqn-m}.
Hence, there is no need to store the whole history of \(Y^{i}_{k}\).
Instead, the current state is sufficient for the update, which implies the storage space will not grow linearly with the step.
This property maintains if generators \(b\) and \(\sigma\) only depends on the expectation of a vector-value function of the state.

\section{Well-posedness of a class of path-distribution dependent SDEs}
In this section we discuss the solvability of equations \eqref{eqn-wmv} and \eqref{eqn-wemv} on any finite time interval \([0,T]\).
To cover more situations, we introduce the following path-distribution dependent SDE
	\begin{equation}	
		\label{eqn-gen}
		\md X_{t}  =b(X_{t},\cM_{t}[X])\md t+\sigma(X_{t},\cM_{t}[X])\md W_{t},
	\end{equation}
where $(\cM_{t}[X])_{t\ge 0}$ is a $\cP_2$-valued process depending on $X$.

To characterize $\cM_{t}[X]$ specifically, we introduce two spaces of processes as follows:
\[
\begin{aligned}
\cX_t & := \Big\{ X=(X_{s})_{0\leq s \leq t}: X_{s}\in\mathcal{F}_s\text{ for }0\leq s\leq t \text{ and } \sup_{0\leq s\leq t}\E|X_{s}|^{2}<\infty\Big\},\\
\cS_t & := \big\{ X\in \cX_t: X \text{ has a continuous modification}\big\}.
\end{aligned}
\]
\(\cX_t\) is a Banach space equipped with the norm
\[\| X \|_{\cX_t}:= \Big(\sup_{0\le s \le t}\E|X_{s}|^{2} \Big)^{1/2}.\]
But \(\cS_{t}\) is not complete under \(\|\cdot\|_{\cX_t}\);
alternatively, define the metric 
\begin{equation*}
	d_{\cS_t}=\|X-Y\|_{\cX_t} +\E\Big[\sup_{0\leq s \leq t}|X_{s}-Y_{s}|\wedge 1\Big].
\end{equation*}
Then \((\cS_t,d_{\cS_t})\) is a complete metric space.
There is a natural restriction from \(\cS_{T}\) to \(\cS_{t}\) for any \(t\leq T\).
We will not distinguish an element in \(\cS_{T}\) with its restriction.

\begin{asmp}\label{ass-M}
The mapping \(\cM:[0,T]\times\Omega\times \cS_{T}\to \cP_{2}\) satisfies:
\begin{itemize}
	\item For any \((t,X)\in[0,T]\times \cS_{T}\), \(\cM(t,\cdot,X)=\cM(t,\cdot,X|_{\cS_{t}})\).
\item \(\mathcal{M}\) is progressively measurable, i.e., for each \(t\) its restriction to \([0,t]\times\Omega\times \cS_{t}\) is \(\cB([0,t])\otimes\cF_{t}\otimes \cB(\cS_{t})\)-measurable.
 	\item There is \(\Omega' \subset \Omega\) with probability $1$ such that for any \((t,\omega)\in[0,T]\times \Omega'\),  \(\cM(t,\omega,\cdot)\) is continuous on \((\cS_T,d_{\cS})\), and \(\cM(\cdot,\omega,\cdot)\) is bounded on any bounded subset of \([0,T]\times \cS_{T}\).
	\item There is a constant \(K\geq 0\) such that 
	\begin{align*}
		\E[\cW_{2}(\cM(t,\cdot,X),\cM(t,\cdot,Y))^{2}]&\leq K\|X-Y\|^{2}_{\cX_t},\\
		\E[\cM(t,\cdot,X)(|\cdot|^{2})]&\leq K \|X\|^{2}_{\cX_t},
	\end{align*}
	for any \((t,X,Y)\in [0,T]\times \cS_{T}\times \cS_{T}\).
\end{itemize}
Denote \(\cM_{t}[X]=\cM(t,\cdot,X)\).
\end{asmp}

\begin{rmk}
	For any \(\pi\in\Pi\), \(\cL^{\pi}_{t}\) and \(\cE^{\pi}_{t}\) satisfy the above assumption.
	Moreover, all the mappings satisfying the above assumption construct a convex space. 
\end{rmk}

As for~\eqref{eqn-gen}, we relax Assumption \ref{assump} as follows.
\begin{asmp}
	\label{asmp-relax}
	$b$ and $\sigma$ are continuous functions on $\bR^d \times \cP_2$ and bounded on any bounded set; 
	there is a constant $ K \ge 0$ such that
	\begin{gather*}
		2\la b(x,\mu)-b(y,\nu),x-y \ra + \|\sigma(x,\mu) -\sigma(y,\nu)\|^{2}
		\leq K\big[|x-y|^{2}+\cW_{2}(\mu,\nu)^{2}\big],\\
	  2\la b(x,\mu),x\ra + \|\sigma(x,\mu)\|^{2} \leq K [ 1+|x|^{2} +\mu(|\cdot|^{2})],
	\end{gather*}
	for all \(x,\,y\in\bR^{d}\) and \(\mu,\,\nu\in \cP_2\).
	\end{asmp}

Then we have the following solvability result.
\begin{thm}\label{thm-well-posed}
	Under Assumptions~\ref{ass-M} and \ref{asmp-relax}, there exists a unique process \(X\) in \(\cS_{T}\) satisfying~\eqref{eqn-gen}
	 with the initial condition \(\xi \in L^{2}_{\cF_{0}}(\Omega)\).
\end{thm}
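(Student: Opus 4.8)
The plan is to prove Theorem~\ref{thm-well-posed} by a Picard-type fixed point argument on the complete metric space $(\cS_T, d_{\cS_T})$, combined with the a priori moment and stability estimates that follow from the monotonicity-type bounds in Assumption~\ref{asmp-relax}. First I would set up the solution map: given $X \in \cS_T$, define $\Psi(X)$ to be the unique strong solution of the \emph{classical} (non-distribution-dependent) SDE
\[
\md Z_t = b(Z_t, \cM_t[X])\,\md t + \sigma(Z_t, \cM_t[X])\,\md W_t, \qquad Z_0 = \xi,
\]
where now $t \mapsto \cM_t[X]$ is a fixed (random, progressively measurable) $\cP_2$-valued process. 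Because $\cM$ only looks at the past of its argument (the first bullet of Assumption~\ref{ass-M}), this is a well-posed functional SDE; its solvability on $[0,T]$ with a continuous modification follows from standard theory for SDEs with random, locally bounded, one-sided Lipschitz coefficients satisfying the coercivity bound $2\la b(x,\mu),x\ra + \|\sigma(x,\mu)\|^2 \le K[1+|x|^2+\mu(|\cdot|^2)]$ — so $\Psi$ maps $\cS_T$ into $\cS_T$. Here I would record the a priori bound $\|\Psi(X)\|_{\cX_t}^2 \le C(1 + \|X\|_{\cX_t}^2)$ via Itô's formula, Gronwall, and the second inequality in Assumption~\ref{ass-M}.

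Second, I would prove a contraction-type estimate. For $X, X' \in \cS_T$, apply Itô's formula to $|\Psi(X)_t - \Psi(X')_t|^2$; the cross terms are controlled by the first inequality of Assumption~\ref{asmp-relax}, yielding
\[
\frac{\md}{\md t}\E|\Psi(X)_t - \Psi(X')_t|^2 \le K\,\E|\Psi(X)_t - \Psi(X')_t|^2 + K\,\E\big[\cW_2(\cM_t[X],\cM_t[X'])^2\big],
\]
and then the Lipschitz bound $\E[\cW_2(\cM_t[X],\cM_t[X'])^2] \le K\|X-X'\|_{\cX_t}^2$ from Assumption~\ref{ass-M}. Gronwall gives $\|\Psi(X) - \Psi(X')\|_{\cX_t}^2 \le C(t)\|X - X'\|_{\cX_t}^2$ with $C(t) \to 0$ as $t \to 0$, so $\Psi$ is a contraction on $\cS_t$ for small $t$ — but in the $\cX_t$-norm, not the metric $d_{\cS_t}$. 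To run Banach's fixed point theorem on the \emph{complete} space $(\cS_t, d_{\cS_t})$ I would observe that the fixed point iteration $X^{(n+1)} = \Psi(X^{(n)})$ is Cauchy in $\|\cdot\|_{\cX_t}$, hence has an $\cX_t$-limit $X^*$; one then checks $X^* \in \cS_t$ (it inherits a continuous modification because, using the Burkholder--Davis--Gundy inequality on the SDE for $\Psi$, the iterates are also Cauchy in $\E[\sup_{s\le t}|\cdot|\wedge 1]$, i.e.\ in $d_{\cS_t}$) and that $X^*$ is the unique fixed point. Alternatively, and perhaps more cleanly, one shows directly that $\Psi$ is a contraction in $d_{\cS_t}$ for small $t$ by combining the $\cX_t$-estimate with a BDG estimate for $\E[\sup_{s\le t}|\Psi(X)_s - \Psi(X')_s|^2]$.

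Third, I would patch the local solution into a global one on $[0,T]$. Since the small time step $t_0$ over which $\Psi$ contracts depends only on $K$ (not on the initial condition, thanks to the linear-growth structure of the a priori bound), I can solve successively on $[0,t_0]$, $[t_0, 2t_0]$, $\dots$, using at each stage the restriction property of $\cM$ (first bullet of Assumption~\ref{ass-M}: $\cM(t,\cdot,X)$ depends only on $X|_{\cS_t}$) so that the solution already constructed on $[0, kt_0]$ is not disturbed. Finitely many steps cover $[0,T]$, and uniqueness on each block gives global uniqueness in $\cS_T$.

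The main obstacle is the mismatch between the two topologies on $\cS_t$: the natural energy estimates from Itô's formula live in the incomplete norm $\|\cdot\|_{\cX_t}$, whereas the space is complete only under $d_{\cS_t}$, which additionally controls the uniform-in-time distance. Resolving this cleanly — either by upgrading the $\cX_t$-contraction to a $d_{\cS_t}$-contraction via BDG, or by arguing convergence of the Picard iterates in both metrics simultaneously and identifying the limit as a continuous process satisfying \eqref{eqn-gen} — is the technical heart of the proof. A secondary subtlety is verifying that the auxiliary SDE defining $\Psi$ is itself well-posed with a \emph{continuous} modification given only that $t\mapsto \cM_t[X]$ is progressively measurable and locally bounded (not continuous in $t$); this is handled by the standard moment estimates plus Kolmogorov's continuity criterion, or by noting the coefficients are, for fixed $\omega$, Carathéodory functions with the required monotonicity, so the classical existence theory applies pathwise.
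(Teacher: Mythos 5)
Your overall architecture is the one the paper uses: freeze the measure argument $\cM_t[X]$, solve the resulting classical SDE (Lemma~\ref{lem-exist}, quoted from Pr\'evot--R\"ockner), iterate, control the iterates through It\^o's formula, the one-sided condition of Assumption~\ref{asmp-relax}, the $\cW_2$-Lipschitz bound on $\cM$ from Assumption~\ref{ass-M}, and Gronwall, and prove uniqueness by the same Gronwall estimate. (The paper runs one global Picard iteration on $[0,T]$ with factorial decay $\|X^{k+1}-X^k\|_{\cX_t}\lesssim (K^2t)^k\me^{Kt}/k!$ instead of your small-time contraction plus patching; that difference is cosmetic.) The genuine gap is in your treatment of the supremum-in-time part of $d_{\cS_t}$: both of your proposed routes rest on a BDG estimate for $\E[\sup_{s\le t}|\Psi(X)_s-\Psi(X')_s|^2]$ (or for the iterate differences), and BDG forces you to bound the quadratic variation, i.e.\ $\E\int_0^t\|\sigma(X^m_s,\cM_s[X^{m-1}])-\sigma(X^n_s,\cM_s[X^{n-1}])\|^2\md s$, \emph{separately}. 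Assumption~\ref{asmp-relax} only controls the combination $2\la \Delta b, x-y\ra+\|\Delta\sigma\|^2$; the paper deliberately does not assume a Lipschitz bound on $\sigma$ alone, nor a growth bound on $b$ that would let you peel the drift term off, so this step fails as stated. The paper's substitute is weaker but sufficient: apply the It\^o inequality at the stopping time $\tau^{m,n}=T\wedge\inf\{t:|X^m_t-X^n_t|\ge\eps\}$, where the stochastic integral has zero expectation and the drift is bounded by exactly the combined one-sided condition, then use Chebyshev to get $\bP(\sup_{t\le T}|X^m_t-X^n_t|\ge\eps)\to 0$. Since $d_{\cS_T}$ involves the supremum only through $\E[\sup|\cdot|\wedge 1]$, convergence in probability of the supremum is all that is needed.

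A second, related point: once a genuine $d_{\cS_t}$-contraction is unavailable (for the reason above), the limit of the iterates is not automatically a fixed point of $\Psi$, so you must pass to the limit in the equation itself. This is where the continuity and local-boundedness hypotheses on $b$, $\sigma$ and $\cM(t,\omega,\cdot)$ in Assumptions~\ref{ass-M} and~\ref{asmp-relax} are actually consumed: the paper extracts an a.s.\ uniformly convergent subsequence, uses dominated convergence for the drift integral, and localizes with the stopping times $\tau_n$ to pass to the limit in the stochastic integral (again because no Lipschitz or growth bound on $\sigma$ is available). Your proposal only gestures at this identification step; with it carried out, and with the stopping-time argument replacing BDG, your plan coincides with the paper's proof of Theorem~\ref{thm-well-posed}.
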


The proof needs the following result from Theorem 3.1.1 in \cite{prevot2007concise}.
\begin{lem}
	\label{lem-exist}
	Let Assumption~\ref{asmp-relax} be satisfied, 
	\(T>0\) be a fixed time, and \(\{ \mu_{t} \}_{t\ge 0}\) be a progressively measurable $\cP_{2}$-valued process such that \(\mu_{t}\) is uniformly bounded on \([0,T]\) a.s.
	Then, the following SDE 	\begin{equation*}
	    \begin{aligned}
			\md Z_{t} & =  b(Z_{t},\mu_{t})\md t + \sigma(Z_{t},\mu_{t})\md W_{t}, \\
			Z_{0} & =  \xi\in L^{2}_{\cF_{0}}(\Omega),
		\end{aligned}
	\end{equation*}
	has a unique continuous solution in \(\cS_{T}\).
\end{lem}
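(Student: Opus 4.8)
The statement is the classical existence--uniqueness theorem for a finite-dimensional It\^o SDE whose coefficients are progressively measurable in $(t,\omega)$, continuous in the state variable, and satisfy a one-sided Lipschitz (monotonicity) bound together with a coercivity bound; specialized to finite dimensions this is exactly Theorem~3.1.1 of \cite{prevot2007concise}, which is itself established by an Euler--Maruyama (or monotone) approximation whose convergence rests only on those two bounds. Accordingly the plan is: (i) record the measurability and pathwise regularity of the coefficient maps $(t,\omega,z)\mapsto b(z,\mu_t(\omega))$ and $(t,\omega,z)\mapsto\sigma(z,\mu_t(\omega))$; (ii) verify the monotonicity and coercivity inequalities that theorem requires, the crucial point being that the measure argument is here \emph{exogenous and identical for both states}; (iii) invoke the theorem to obtain a unique continuous adapted solution and derive the a priori $L^2$-bound that places it in $\cS_T$; (iv) give the short direct argument for uniqueness within $\cS_T$.

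For (i): since $b,\sigma$ are continuous on $\bR^d\times\cP_2$ and bounded on bounded sets while $(\mu_t)_{t\le T}$ is progressively measurable and a.s.\ uniformly bounded on $[0,T]$ --- so that a.s.\ $\{\mu_t(\omega):t\le T\}$ is a bounded subset of $(\cP_2,\cW_2)$, recalling $\cW_2(\mu_t,\delta_0)^2=\mu_t(|\cdot|^2)$ --- one checks routinely that $z\mapsto b(z,\mu_t(\omega))$ and $z\mapsto\sigma(z,\mu_t(\omega))$ are continuous for each $(t,\omega)$, that $t\mapsto b(z,\mu_t)$ and $t\mapsto\sigma(z,\mu_t)$ are progressively measurable for each $z$, and that $\sup_{|z|\le m,\ t\le T}\big(|b(z,\mu_t)|+\|\sigma(z,\mu_t)\|\big)<\infty$ a.s.\ for every $m$; these supply the hemicontinuity and local boundedness needed. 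For (ii), put $\mu=\nu=\mu_t(\omega)$ in the first inequality of Assumption~\ref{asmp-relax}: since $\cW_2(\mu_t,\mu_t)=0$ it collapses to the classical monotonicity
\[
2\la b(x,\mu_t)-b(y,\mu_t),x-y\ra+\|\sigma(x,\mu_t)-\sigma(y,\mu_t)\|^2\le K|x-y|^2 ,
\]
and the second inequality of Assumption~\ref{asmp-relax} gives the coercivity
\[
2\la b(x,\mu_t),x\ra+\|\sigma(x,\mu_t)\|^2\le K(1+|x|^2)+K\,\mu_t(|\cdot|^2) ,
\]
whose right-hand side is a.s.\ at most $K(1+|x|^2)+K\,C_T$ with $C_T:=\esssup_{t\le T}\mu_t(|\cdot|^2)<\infty$ a.s.

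With (i) and (ii) in hand, Theorem~3.1.1 of \cite{prevot2007concise} yields a unique continuous $(\cF_t)$-adapted $Z$ solving the SDE with $Z_0=\xi$. Applying It\^o's formula to $|Z_t|^2$ along a localizing sequence $\tau_n\uparrow\infty$, taking expectations so the martingale term vanishes, inserting the coercivity bound, and using $\E|\xi|^2<\infty$ together with Gr\"onwall's inequality gives $\sup_{t\le T}\E|Z_t|^2<\infty$, i.e.\ $Z\in\cS_T$. For uniqueness in $\cS_T$: if $Z,Z'\in\cS_T$ both solve the SDE from $\xi$, apply It\^o's formula to $|Z_t-Z'_t|^2$ along a localizing sequence, take expectations, and use the monotonicity bound to get $\E|Z_{t\wedge\tau_n}-Z'_{t\wedge\tau_n}|^2\le K\int_0^t\E|Z_{s\wedge\tau_n}-Z'_{s\wedge\tau_n}|^2\,\md s$; letting $n\to\infty$ (Fatou on the left, dominated convergence on the right via the bound $2\sup_s(\E|Z_s|^2+\E|Z'_s|^2)<\infty$) and Gr\"onwall's inequality force $\E|Z_t-Z'_t|^2=0$ for every $t\le T$, hence $Z$ and $Z'$ are indistinguishable by path continuity.

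The substantive work is not the two inequalities --- which are immediate once one notices the measure argument is frozen --- but the routine yet unavoidable bookkeeping for invoking the monotone-SDE machinery with \emph{random, time-dependent} coefficients: checking the progressive-measurability and local-boundedness hypotheses, and securing the a priori bound $\sup_{t\le T}\E|Z_t|^2<\infty$ despite the coercivity constant $C_T$ being only a.s.\ finite. The latter is where I would be most careful; it is handled as in \cite{prevot2007concise}, and is in any case harmless in every use of this lemma in the paper, where Assumption~\ref{ass-M} moreover provides $\sup_{t\le T}\E[\mu_t(|\cdot|^2)]<\infty$, making the Gr\"onwall step in (iii) entirely routine.
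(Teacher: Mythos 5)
Your proposal is correct and matches the paper's approach: the paper gives no proof of this lemma at all, simply citing Theorem~3.1.1 of \cite{prevot2007concise}, and your verification that freezing the measure argument $\mu=\nu=\mu_t(\omega)$ collapses Assumption~\ref{asmp-relax} to the classical monotonicity and coercivity hypotheses of that theorem is exactly the intended reading. The extra bookkeeping you supply (progressive measurability of the frozen coefficients, the a priori $L^2$-bound via Gr\"onwall, and uniqueness in $\cS_T$) is sound and fills in details the paper leaves implicit.
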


\begin{proof}[Proof of Theorem~\ref{thm-well-posed}]
	Set \(X^{0}_{t}=\xi\) for any \(t\in[0,T]\).
	We define \(X^{k+1}\) inductively by the following equation:
	\begin{equation}
		\label{eqn-t}
	X^{k+1}_{t} =\xi+\int_{0}^{t}b(X^{k+1}_{s},\cM_{s}[X^{k}])\md s+\int_{0}^{t}\sigma(X^{k+1}_{s},\cM_{s}[X^{k}])\md W_{s}.
	\end{equation}
	From Lemma \ref{lem-exist} and properties of \(\cM\), we have \(X^{k+1}\in \cS_{T}\) is well-defined.

	We will show that \((X^{k})_{k\geq 0}\) is a Cauchy sequence in \((\cS_T,d_{\cS_T})\).
	To simplify the notation, we denote 
	\[b(X^{m}_{t},\cM_{t}[X^{m-1}])-b(X^{n}_{t},\cM_{t}[X^{n-1}])\] 
	and 
	\[\sigma(X^{m}_{t},\cM_{t}[X^{m-1}])-\sigma(X^{n}_{t},\cM_{t}[X^{n-1}])\] 
	by \(\Delta b_{t}\) and \(\Delta \sigma_{t}\), respectively.
	By It\^o's formula, we have
	\begin{align*}
		  \md[ |X^{m}_{t}-X^{n}_{t}|^{2} ] 
		& = ( 2\la  X^{m}_{t}-X^{n}_{t},\Delta b_{t}\ra + \|\Delta \sigma\|^{2} )\md t + 2\la  X^{m}_{t}-X^{n}_{t}, \Delta\sigma_{t}\md W_{t} \ra.
	\end{align*}
	Together with Assumption~\ref{asmp-relax}, we have
    \begin{equation}\label{aln-ito}	
    \begin{aligned}
		|X^{m}_{t}-X^{n}_{t}|^{2} 
		& \leq \int_{0}^{t}  K \big[|X^{m}_{s}-X^{n}_{s}|^{2} + \cW_{2}(\cM_{s}[X^{m-1}],\cM_{s}[X^{n-1}])^{2}\big]\md s\\
		& \quad + 2 \int_{0}^{t} \la X^{m}_{s}-X^{n}_{s}, \Delta\sigma_{s}\md W_{s} \ra.
	\end{aligned}
	\end{equation}
	Setting \((m,n)=(k+1,k)\) and taking expectation on the both sides, we obtain
	\begin{align*}
		\E|X^{k+1}_{t}-X^{k}_{t}|^{2}
		& \leq \int_{0}^{t} K\big[  \E|X^{k+1}_{s}-X^{k}_{s}|^{2} + \E\big[\cW_{2}(\cM_{s}[X^{k}],\cM_{s}[X^{k-1}])^{2}\big] \big]\md s\\
		&\leq \int_{0}^{t} \big[ K \E|X^{k+1}_{s}-X^{k}_{s}|^{2}  + K^{2} \|X^{k}-X^{k-1}\|_{\cX_s} \big]\md s.
	\end{align*}
	Define \(g^{k+1}(t)=\me^{-Kt}\|X^{k+1}-X^{k}\|_{\cX_t}\).
	The above estimate implies
	\begin{align*}
		g^{k+1}(t)\leq  K^{2}\int_{0}^{t}  g^{k}(s) \md s.
	\end{align*}
	Following the standard Picard iteration, we have 
	\begin{equation}
		\label{eqn-pic}
		\|X^{k+1}-X^{k}\|_{\cX_t} \lesssim  \frac{(K^{2}t)^{k}\me^{Kt}}{k!}.
	\end{equation}
	Then, to show  \((X^{k})_{k\geq 0}\) is a Cauchy sequence in \( (\cS_T,d_{\cS_T})\), it suffices to show that for any \(\varepsilon>0\)
	\begin{equation*}
		\lim_{m,\,n\to\infty}\bP \Big(\sup_{0\leq t\leq T}|X_{t}^{m}-X_{t}^{n}|\geq \varepsilon \Big)=0.
	\end{equation*}
	For any \(m\) and \(n\), define \[\tau^{m,n}=T\wedge\inf\{t\geq 0:|X^{m}_{t}-X^{n}_{t}|\geq \varepsilon\}.\]
	Then, we have
	\begin{align*}
		\bP(\sup_{0\leq t\leq T}|X_{t}^{m}-X_{t}^{n}|\geq \varepsilon)
		&=\bP(|X_{\tau^{m,n}}^{m}-X_{\tau^{m,n}}^{n}|\geq \varepsilon)\\
		&\leq \varepsilon^{-2} \E |X^{m}_{\tau^{m,n}}-X^{n}_{\tau^{m,n}}|^{2}.
	\end{align*}
	By \eqref{aln-ito} and Doob's inequality, we have
	\begin{align*}
		\E |X^{m}_{\tau^{m,n}}-X^{n}_{\tau^{m,n}}|^{2} 
				& \leq \int_{0}^{T} \big[ K \E|X^{m}_{s}-X^{n}_{s}|^{2} +K \E\big[\cW_{2}(\cM_{s}[X^{m-1}],\cM_{s}[X^{n-1}])^{2}\big] \big]\md s\\
		&\leq KT \|X^{m}-X^{n}\|^{2}_{\cX_T} +K^{2}T \|X^{m-1}-X^{n-1}\|^{2}_{\cX_T}.
	\end{align*}
	This along with~\eqref{eqn-pic} yields that \((X^{k})_{k\geq 0}\) is a Cauchy sequence in $(\cS_T,d_{\cS_T})$.
	
	Denote by \(X\in\cS_T\) the limit of \((X^{k})_{k\geq 0}\).
	By taking a subsequence, we may assume that there is a full probability set $\Omega' \subset\Omega$ such that for any \(\omega\in\Omega'\),
	\[\lim_{k\to\infty}\sup_{0\leq t\leq T}|X^{k}_{t}-X_{t}|=0.\]
	By the continuity of \(b\) and \(\cM(t,\omega,\cdot)\),
	we have for any \((t,\omega)\in[0,T]\times \Omega'\),
	\[\lim_{k\to\infty}b(X^{k+1}_{t},\cM_{t}[X^{k}])=b(X_{t},\cM_{t}[X]).\]
	Since \(b\) and \(\cM(\cdot,\omega,\cdot)\) is bounded on any bounded set, by Lebesgue dominated convergence theorem, we have
	\[\lim_{k\to\infty}\int_{0}^{t}b(X^{k+1}_{s},\cM_{s}[X^{k}])\md s=\int_{0}^{t}b(X_{s},\cM_{s}[X])\md s \quad\text{ a.s.}\]

	Similarly, by the continuity of \(\sigma\) and \(\cM(t,\omega,\cdot)\),
	we have for any \((t,\omega)\in[0,T]\times \Omega'\),
	\[\lim_{k\to\infty}\sigma(X^{k+1}_{t},\cM_{t}[X^{k}])=\sigma(X_{t},\cM_{t}[X]).\]
	Define a sequence of stopping times 
	\[\tau_n:=T\wedge\inf \Big\{t\geq 0: \sup_{0\leq t\leq T}\|\sigma(X_{t},\cM_{t}[X])\|\geq n \Big\}.\]
	Again, by Lebesgue's dominated convergence theorem, we obtain
	\[\lim_{k\to\infty}\E\bigg(\int_{0}^{\tau_n}\|\sigma(X^{k+1}_{s},\cM_{s}[X^{k}])-\sigma(X_{s},\cM_{s}[X])\|^{2}\md s\bigg)=0.\]
	This implies, for any \(n>0\) and \(t\leq \tau_n\),
	\[\lim_{k\to\infty}\int_{0}^{t}\sigma(X^{k+1}_{s},\cM_{s}[X^{k}])\md W_{s}=\int_{0}^{t}\sigma(X_{s},\cM_{s}[X])\md W_{s}.\]
	Furthermore, since \(\sigma\) and \(\cM(\cdot,\omega,\cdot)\) is bounded on any bounded set, we have 
	\[\lim_{n \to\infty}\tau_n=T \quad\text{a.s.}\]
	Therefore, we show that for any \(t\in[0,T]\)
	\[
	X_{t}=\xi+\int_{0}^{t}b(X_{s},\cM_{s}[X])\md s+\int_{0}^{t}\sigma(X_{s},\cM_{s}[X])\md W_{s}.\]
	This yields the existence. 
	
	If $X^1, X^2 \in \cS_T$ both satisfy~\eqref{eqn-gen}
	with the same initial condition,
	then following a similar argument for~\eqref{aln-ito}, one has
	\[
		\begin{aligned}
		\E|X^{1}_{t}-X^{2}_{t}|^{2}
		& \leq \int_{0}^{t} K\big[  \E|X^{1}_{s}-X^{2}_{s}|^{2} + \E\big[\cW_{2}(\cM_{s}[X^{1}],\cM_{s}[X^{2}])^{2}\big] \big]\md s\\
		&\leq C \int_{0}^{t} \E|X^{1}_{s}-X^{2}_{s}|^{2}\big]\md s.
	\end{aligned}
	\]
	This along with Gronwall's inequality implies the uniqueness.
\end{proof}

\bibliography{mybib}

\begin{thebibliography}{45}
\providecommand{\natexlab}[1]{#1}
\providecommand{\url}[1]{\texttt{#1}}
\expandafter\ifx\csname urlstyle\endcsname\relax
  \providecommand{\doi}[1]{doi: #1}\else
  \providecommand{\doi}{doi: \begingroup \urlstyle{rm}\Url}\fi

\bibitem[Ahmed and Ding(1993)]{ahmed1993invariant}
N.U. Ahmed and Xinhong Ding.
\newblock On invariant measures of nonlinear {M}arkov processes.
\newblock \emph{Journal of Applied Mathematics and Stochastic Analysis},
  6\penalty0 (4):\penalty0 385--406, 1993.

\bibitem[AlRachid et~al.(2019)AlRachid, Bossy, Ricci, and
  Szpruch]{alrachid2019new}
Houssam AlRachid, Mireille Bossy, Cristiano Ricci, and Lukasz Szpruch.
\newblock New particle representations for ergodic {McKean--Vlasov SDEs}.
\newblock \emph{ESAIM: Proceedings and Surveys}, 65:\penalty0 68--83, 2019.

\bibitem[Ambrosio et~al.(2019)Ambrosio, Stra, and Trevisan]{ambrosio2019pde}
Luigi Ambrosio, Federico Stra, and Dario Trevisan.
\newblock A {PDE} approach to a 2-dimensional matching problem.
\newblock \emph{Probability Theory and Related Fields}, 173\penalty0
  (1):\penalty0 433--477, 2019.

\bibitem[Bao et~al.(2021)Bao, Scheutzow, and Yuan]{bao2021existence}
Jianhai Bao, Michael Scheutzow, and Chenggui Yuan.
\newblock Existence of invariant probability measures for functional
  {McKean-Vlasov SDEs}.
\newblock \emph{arXiv preprint arXiv:2107.13881}, 2021.

\bibitem[Benachour et~al.(1998{\natexlab{a}})Benachour, Roynette, Talay, and
  Vallois]{benachour1998nonlinearI}
Sa{\i}d Benachour, Bernard Roynette, Denis Talay, and Pierre Vallois.
\newblock Nonlinear self-stabilizing processes -- {I}: existence, invariant
  probability, propagation of chaos.
\newblock \emph{Stochastic processes and their applications}, 75\penalty0
  (2):\penalty0 173--201, 1998{\natexlab{a}}.

\bibitem[Benachour et~al.(1998{\natexlab{b}})Benachour, Roynette, and
  Vallois]{benachour1998nonlinearII}
Sa{\i}d Benachour, Bernard Roynette, and Pierre Vallois.
\newblock Nonlinear self-stabilizing processes -- {II}: Convergence to
  invariant probability.
\newblock \emph{Stochastic processes and their applications}, 75\penalty0
  (2):\penalty0 203--224, 1998{\natexlab{b}}.

\bibitem[Bena{\"\i}m et~al.(2002)Bena{\"\i}m, Ledoux, and
  Raimond]{benaim2002self}
Michel Bena{\"\i}m, Michel Ledoux, and Olivier Raimond.
\newblock Self-interacting diffusions.
\newblock \emph{Probability Theory and Related Fields}, 122\penalty0
  (1):\penalty0 1--41, 2002.

\bibitem[Birkhoff(1931)]{birkhoff1931proof}
George~D. Birkhoff.
\newblock Proof of the ergodic theorem.
\newblock \emph{Proceedings of the National Academy of Sciences}, 17\penalty0
  (12):\penalty0 656--660, 1931.
\newblock ISSN 0027-8424.
\newblock \doi{10.1073/pnas.17.2.656}.
\newblock URL \url{https://www.pnas.org/content/17/12/656}.

\bibitem[Bogachev et~al.(2019)Bogachev, R{\"o}ckner, and
  Shaposhnikov]{bogachev2019convergence}
Vladimi~I. Bogachev, Michael R{\"o}ckner, and Stanislav~V. Shaposhnikov.
\newblock On convergence to stationary distributions for solutions of nonlinear
  {Fokker--Planck--Kolmogorov} equations.
\newblock \emph{Journal of Mathematical Sciences}, 242\penalty0 (1):\penalty0
  69--84, 2019.

\bibitem[Boissard and Le~Gouic(2014)]{boissard2014mean}
Emmanuel Boissard and Thibaut Le~Gouic.
\newblock On the mean speed of convergence of empirical and occupation measures
  in {W}asserstein distance.
\newblock In \emph{Annales de l'IHP Probabilit{\'e}s et statistiques},
  volume~50, pages 539--563, 2014.

\bibitem[Buckdahn et~al.(2017)Buckdahn, Li, Peng, and Rainer]{buckdahn2017mean}
Rainer Buckdahn, Juan Li, Shige Peng, and Catherine Rainer.
\newblock Mean-field stochastic differential equations and associated {PDEs}.
\newblock \emph{The Annals of Probability}, 45\penalty0 (2):\penalty0 824--878,
  2017.

\bibitem[Carmona and Delarue(2018)]{carmona2018probabilistic}
Ren{\'e} Carmona and Fran{\c{c}}ois Delarue.
\newblock \emph{Probabilistic Theory of Mean Field Games with Applications
  I--II}.
\newblock Springer, 2018.

\bibitem[Carrillo et~al.(2003)Carrillo, McCann, and
  Villani]{carrillo2003kinetic}
Jos{\'e}~A Carrillo, Robert~J McCann, and C{\'e}dric Villani.
\newblock Kinetic equilibration rates for granular media and related equations:
  entropy dissipation and mass transportation estimates.
\newblock \emph{Revista Matematica Iberoamericana}, 19\penalty0 (3):\penalty0
  971--1018, 2003.

\bibitem[Da~Prato and Zabczyk(1996)]{da1996ergodicity}
Giuseppe Da~Prato and Jerzy Zabczyk.
\newblock \emph{Ergodicity for Infinite Fimensional Systems}, volume 229.
\newblock Cambridge University Press, 1996.

\bibitem[Driver(2012)]{driver2012ordinary}
Rodney~David Driver.
\newblock \emph{Ordinary and Delay Differential Equations}, volume~20.
\newblock Springer Science \& Business Media, 2012.

\bibitem[Durrett and Rogers(1992)]{durrett1992asymptotic}
Richard~Timothy Durrett and Leonard Christopher~Gordon Rogers.
\newblock Asymptotic behavior of {B}rownian polymers.
\newblock \emph{Probability Theory and Related Fields}, 92\penalty0
  (3):\penalty0 337--349, 1992.

\bibitem[Eberle et~al.(2019)Eberle, Guillin, and
  Zimmer]{eberle2019quantitative}
Andreas Eberle, Arnaud Guillin, and Raphael Zimmer.
\newblock Quantitative {H}arris-type theorems for diffusions and
  {McKean--Vlasov} processes.
\newblock \emph{Transactions of the American Mathematical Society},
  371\penalty0 (10):\penalty0 7135--7173, 2019.

\bibitem[Fournier and Guillin(2015)]{fournier2015rate}
Nicolas Fournier and Arnaud Guillin.
\newblock On the rate of convergence in {W}asserstein distance of the empirical
  measure.
\newblock \emph{Probability Theory and Related Fields}, 162\penalty0
  (3):\penalty0 707--738, 2015.

\bibitem[Gin{\'e} and Zinn(1984)]{gine1984some}
Evarist Gin{\'e} and Joel Zinn.
\newblock Some limit theorems for empirical processes.
\newblock \emph{The Annals of Probability}, pages 929--989, 1984.

\bibitem[Gu et~al.(2021)Gu, Guo, Wei, and Xu]{gu2021mean}
Haotian Gu, Xin Guo, Xiaoli Wei, and Renyuan Xu.
\newblock Mean-field multi-agent reinforcement learning: A decentralized
  network approach.
\newblock \emph{arXiv preprint arXiv:2108.02731}, 2021.

\bibitem[Hammersley et~al.(2021)Hammersley, {\v{S}}i{\v{s}}ka, and
  Szpruch]{hammersley2021mckean}
William~R.P. Hammersley, David {\v{S}}i{\v{s}}ka, and {\L}ukasz Szpruch.
\newblock Mckean--{V}lasov {SDEs} under measure dependent {L}yapunov
  conditions.
\newblock In \emph{Annales de l'Institut Henri Poincar{\'e}, Probabilit{\'e}s
  et Statistiques}, volume~57, pages 1032--1057. Institut Henri Poincar{\'e},
  2021.

\bibitem[Horowitz and Karandikar(1994)]{horowitz1994mean}
Joseph Horowitz and Rajeeva~L. Karandikar.
\newblock Mean rates of convergence of empirical measures in the {W}asserstein
  metric.
\newblock \emph{Journal of Computational and Applied Mathematics}, 55\penalty0
  (3):\penalty0 261--273, 1994.

\bibitem[Huang et~al.(2019)Huang, R{\"o}ckner, and Wang]{huang2017nonlinear}
Xing Huang, Michael R{\"o}ckner, and Feng-Yu Wang.
\newblock {Nonlinear Fokker-Planck equations for probability measures on path
  space and path-distribution dependent SDEs}.
\newblock \emph{Discrete and Continuous Dynamical Systems - A}, 39\penalty0
  (6):\penalty0 3017--3035, 2019.

\bibitem[Kac(1956)]{kac1956foundations}
Mark Kac.
\newblock Foundations of kinetic theory.
\newblock In \emph{Proceedings of the Third Berkeley Symposium on Mathematical
  Statistics and Probability, Volume 3: Contributions to Astronomy and
  Physics}, pages 171--197. University of California Press, 1956.

\bibitem[Kleptsyn and Kurtzmann(2012)]{kleptsyn2012ergodicity}
Victor Kleptsyn and Aline Kurtzmann.
\newblock Ergodicity of self-attracting motion.
\newblock \emph{Electronic Journal of Probability}, 17:\penalty0 1--37, 2012.

\bibitem[Lasry and Lions(2007)]{lasry2007mean}
Jean-Michel Lasry and Pierre-Louis Lions.
\newblock Mean field games.
\newblock \emph{Japanese Journal of Mathematics}, 2\penalty0 (1):\penalty0
  229--260, 2007.

\bibitem[Liu et~al.(2021)Liu, Wu, and Zhang]{liu2020long}
Wei Liu, Liming Wu, and Chaoen Zhang.
\newblock Long-time behaviors of mean-field interacting particle systems
  related to mckean-vlasov equations.
\newblock \emph{Communications in Mathematical Physics}, 387:\penalty0
  179--214, 2021.

\bibitem[McKean(1966)]{mckean1966class}
Henry~P. McKean.
\newblock A class of {M}arkov processes associated with nonlinear parabolic
  equations.
\newblock \emph{Proceedings of the National Academy of Sciences of the United
  States of America}, 56\penalty0 (6):\penalty0 1907, 1966.

\bibitem[Mehri et~al.(2020)Mehri, Scheutzow, Stannat, and
  Zangeneh]{mehri2020propagation}
Sima Mehri, Michael Scheutzow, Wilhelm Stannat, and Bian~Z. Zangeneh.
\newblock Propagation of chaos for stochastic spatially structured neuronal
  networks with delay driven by jump diffusions.
\newblock \emph{The Annals of Applied Probability}, 30\penalty0 (1):\penalty0
  175--207, 2020.

\bibitem[Mishura and Veretennikov(2020)]{mishura2020existence}
Yuliya Mishura and Alexander Veretennikov.
\newblock Existence and uniqueness theorems for solutions of {McKean--Vlasov}
  stochastic equations.
\newblock \emph{Theory of Probability and Mathematical Statistics},
  103:\penalty0 59--101, 2020.

\bibitem[Pemantle(2007)]{pemantle2007survey}
Robin Pemantle.
\newblock A survey of random processes with reinforcement.
\newblock \emph{Probability Surveys}, 4:\penalty0 1--79, 2007.

\bibitem[Pr{\'e}v{\^o}t and R{\"o}ckner(2007)]{prevot2007concise}
Claudia Pr{\'e}v{\^o}t and Michael R{\"o}ckner.
\newblock \emph{A Concise Course on Stochastic Partial Differential Equations},
  volume 1905.
\newblock Springer, 2007.

\bibitem[Ren and Wang(2020)]{ren2020exponential}
Panpan Ren and Feng-Yu Wang.
\newblock Exponential convergence in entropy and wasserstein distance for
  {McKean--Vlasov SDEs}.
\newblock \emph{arXiv preprint arXiv:2010.08950}, 2020.

\bibitem[Riekert(2021)]{riekert2021wasserstein}
Adrian Riekert.
\newblock Wasserstein convergence rate for empirical measures of {M}arkov
  chains.
\newblock \emph{arXiv preprint arXiv:2101.06936}, 2021.

\bibitem[Sharrock et~al.(2021)Sharrock, Kantas, Parpas, and
  Pavliotis]{sharrock2021parameter}
Louis Sharrock, Nikolas Kantas, Panos Parpas, and Grigorios~A. Pavliotis.
\newblock Parameter estimation for the {McKean--Vlasov} stochastic differential
  equation.
\newblock \emph{arXiv preprint arXiv:2106.13751}, 2021.

\bibitem[Sznitman(1991)]{sznitman1991topics}
Alain-Sol Sznitman.
\newblock Topics in propagation of chaos.
\newblock In \emph{Ecole d'{\'e}t{\'e} de probabilit{\'e}s de Saint-Flour
  XIX--1989}, pages 165--251. Springer, 1991.

\bibitem[Touboul(2014)]{touboul2014prop}
Jonathan Touboul.
\newblock Propagation of chaos in neural fields.
\newblock \emph{The Annals of Applied Probability}, 24\penalty0 (3):\penalty0
  1298--1328, 2014.

\bibitem[Veretennikov(2006)]{veretennikov2006ergodic}
Alexander Veretennikov.
\newblock On ergodic measures for {McKean-Vlasov} stochastic equations.
\newblock In \emph{Monte Carlo and Quasi-Monte Carlo Methods}, pages 471--486.
  Springer, 2006.

\bibitem[Villani(2009)]{villani2009optimal}
C{\'e}dric Villani.
\newblock \emph{Optimal transport: old and new}, volume 338.
\newblock Springer, 2009.

\bibitem[Wang(2018)]{wang2018distribution}
Feng-Yu Wang.
\newblock Distribution dependent {SDE}s for {L}andau type equations.
\newblock \emph{Stochastic Processes and their Applications}, 128\penalty0
  (2):\penalty0 595--621, 2018.

\bibitem[Wang(2020)]{wang2020wasserstein}
Feng-Yu Wang.
\newblock Wasserstein convergence rate for empirical measures on noncompact
  manifolds.
\newblock \emph{arXiv preprint arXiv:2007.14667}, 2020.

\bibitem[Wang(2021)]{wang2101exponential}
Feng-Yu Wang.
\newblock Exponential ergodicity for non-dissipative mckean-vlasov sdes.
\newblock \emph{arXiv preprint arXiv:2101.12562}, 2021.

\bibitem[Wang and Zhu(2019)]{wang2019limit}
Feng-Yu Wang and Jie-Xiang Zhu.
\newblock Limit theorems in {W}arsserstein distance for empirical measures of
  diffusion processes on {R}iemannian manifolds.
\newblock \emph{arXiv preprint arXiv:1906.03422}, 2019.

\bibitem[Yang et~al.(2018)Yang, Luo, Li, Zhou, Zhang, and Wang]{yang2018mean}
Yaodong Yang, Rui Luo, Minne Li, Ming Zhou, Weinan Zhang, and Jun Wang.
\newblock Mean field multi-agent reinforcement learning.
\newblock In \emph{International Conference on Machine Learning}, pages
  5571--5580. PMLR, 2018.

\bibitem[Zhang(2021)]{zhang2021existence}
Shao-Qin Zhang.
\newblock Existence and non-uniqueness of stationary distributions for
  distribution dependent {SDE}s.
\newblock \emph{arXiv preprint arXiv:2105.04226}, 2021.

\end{thebibliography}

\end{document}